\documentclass[12pt]{amsart}
\usepackage{fullpage}
\usepackage[utf8]{inputenc}
\usepackage{graphicx}
\usepackage{enumerate}
\usepackage{microtype}
\usepackage{xcolor}
\usepackage[colorlinks,bookmarks,linkcolor=black,citecolor=black]{hyperref} 
\usepackage{textcase}
\usepackage{mathtools}
\usepackage{bm}

\mathtoolsset{showonlyrefs}

\usepackage[mathlines,switch]{lineno}

\newtheorem{theorem}{Theorem}[section]
\newtheorem*{theorem*}{Theorem}
\newtheorem*{proposition*}{Propostion}
\newtheorem{lemma}[theorem]{Lemma}
\newtheorem{conjecture}[theorem]{Conjecture}
\newtheorem{proposition}[theorem]{Proposition}
\newtheorem{claim}[theorem]{Claim}
\newtheorem{corollary}[theorem]{Corollary}
\theoremstyle{definition}
\newtheorem{definition}[theorem]{Definition}
\newtheorem{observation}[theorem]{Observation}

\newtheorem{fact}[theorem]{Fact}

\theoremstyle{plain}
\newtheorem*{lem:embed}{Lemma \ref{embed}}
\newtheorem*{lem:embed2}{Lemma \ref{embed2}}

\theoremstyle{remark}

\numberwithin{equation}{section}

\newcommand{\EMAIL}[1]{  \textit{E-mail}: \texttt{#1}}

\newcommand{\abs}[1]{\lvert#1\rvert}
\newcommand{\norm}[1]{\lVert#1\rVert}
\let\eps\varepsilon
\let\del\delta

\title[Ramsey numbers via non-linear optimisation (\today)]{Exact Ramsey numbers of odd cycles via nonlinear optimisation}

\author{Matthew Jenssen\dag, Jozef Skokan\dag\ddag}

\thanks{
\dag Department of Mathematics, London School of Economics, Houghton Street,
    London WC2A 2AE, U.K.
\EMAIL{\{m.o.jenssen|j.skokan\}@lse.ac.uk}}

\thanks{
\ddag
Department of Mathematics, University of Illinois at Urbana-Champaign, 1409 W. Green Street, Urbana, IL 61801, USA}

\date{\today}

\dedicatory{}

\keywords{}

\begin{document}

\begin{abstract}
For a graph $G$, the $k$-colour Ramsey number $R_k(G)$ is the least integer $N$ such that every $k$-colouring of the edges of the complete graph $K_N$ contains a monochromatic copy of $G$. Let $C_n$ denote the cycle on $n$ vertices.
We show that for fixed $k\geq2$ and $n$ odd and sufficiently large,
$$
R_k(C_n)=2^{k-1}(n-1)+1.
$$
This resolves a conjecture of Bondy and Erd\H{o}s [J. Combin. Th. Ser. B \textbf{14} (1973), 46--54] for large $n$. The proof is analytic in nature, the first step of which is to use the regularity method to relate this problem in Ramsey theory to one in nonlinear optimisation.  This allows us to prove a stability-type generalisation of the above and establish a surprising correspondence between extremal $k$-colourings for this problem and perfect matchings in the $k$-dimensional hypercube $Q_k$.
\end{abstract}

\maketitle

\section{Introduction}
\thispagestyle{empty}
One of the most well-known and extensively researched problems in combinatorics is that of~determining the \emph{Ramsey numbers} of graphs, defined as follows. Given graphs $G_1, G_2, \ldots, G_k$, the \emph{Ramsey number} $R(G_1, \ldots, G_k)$ is the least integer $N$ such that any colouring of the edges of the complete graph $K_N$ on $N$ vertices with $k$ colours contains a monochromatic copy of $G_i$ in the $i$-th colour for some $i$, $1\leq i\leq k$. In the case where $G_1, \ldots, G_k$ are all isomorphic to the graph $G$, we call $R(G_1,\ldots, G_k)$ the \emph{$k$-colour Ramsey number} of $G$ and denote it by $R_k(G)$. Broadly speaking the philosophy underpinning Ramsey theory is that large, potentially highly disordered structures must contain ordered substructures.

Ramsey theory owes its name to the seminal paper of Frank Ramsey~\cite{Ram} where it is shown that Ramsey numbers are finite. The oldest and most famous examples of Ramsey numbers are those involving cliques. The systematic study of such Ramsey numbers began with a paper of Erd\H{o}s and Szekeres~\cite{ErdSze} who considered the problem of determining $R_2(K_k)$, where $K_k$ denotes the clique on $k$ vertices. Erd\H{o}s and Szekeres~\cite{ErdSze} and Erd\H{o}s~\cite{ErdL} showed that $2^{k/2}\leq R_2(K_k)\leq 4^k$. The problem of improving these bounds has gained significant notoriety and only small improvements have been made over the last eighty years. In the case where the graphs in question are sparse in some sense (e.g. they have bounded maximum degree), the situation seems somewhat simpler. The cycle on $n$ vertices $C_n$ was one of the earliest subjects in the study of Ramsey numbers of~sparse graphs. The behaviour of the Ramsey number $R(C_{n_1}, C_{n_2})$ has been studied and fully determined by several authors, including Bondy and Erd\H{o}s~\cite{BonErd}, Faudree and Schelp~\cite{FauSch}, and Rosta~\cite{Rosta}. For example it is known that
 \[ R_2(C_n) 
    =
  \begin{cases}2n-1, & \text{if $n\ge5$ is odd,}\\
  \frac{3n}{2} -1, & \text{if $n\ge6$ is even.}
\end{cases}
 \]

Results such as this one that exactly determine $R(G_1, G_2)$ for a pair of graphs $G_1, G_2$ are by now fairly plentiful. See Radziszowski~\cite{survey} for an excellent survey of such results. However, in the case where more than two colours are involved such results are still rather rare. The only non-trivial class of graphs for which the $k$-colour Ramsey number is exactly determined for arbitrary $k$ is that of matchings (a result due to Cockayne and Lorimer~\cite{lorimer}). In this paper we address the following conjecture attributed to Bondy and Erd\H{o}s~\cite{BonErd}.
\begin{conjecture}\label{bon}
If $k\geq2$ and $n>3$ is odd then 
 $$R_k(C_n)=2^{k-1}(n-1)+1.$$
\end{conjecture}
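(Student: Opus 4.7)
The lower bound $R_k(C_n) \geq 2^{k-1}(n-1)+1$ follows from a standard construction. Partition $[2^{k-1}(n-1)]$ into $2^{k-1}$ blocks indexed by $\{0,1\}^{k-1}$, each of size $n-1$. Colour every edge inside a block with colour $k$, and for each edge between two blocks labelled $a\neq b$, use colour $i$ where $i$ is the smallest coordinate on which $a$ and $b$ differ. Then colour $k$ is a disjoint union of copies of $K_{n-1}$, and for each $i<k$ colour $i$ is bipartite (with parts determined by the $i$-th coordinate); in particular no colour class contains an odd cycle of length $n$.

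For the upper bound I would argue by contradiction. Suppose $\chi:E(K_N)\to[k]$ is a $k$-colouring with $N=2^{k-1}(n-1)+1$ and no monochromatic $C_n$. Apply the multicolour Szemerédi regularity lemma (with $\eps\ll 1/k$) to obtain an equitable partition $V_1,\dots,V_M$ in which nearly all pairs are $\eps$-regular in every colour, and form the reduced weighted $k$-edge-coloured complete graph on $[M]$ carrying density vectors $(d_1(i,j),\dots,d_k(i,j))$ summing to $1$ on each pair. The connected-matching machinery of \L uczak, together with standard blow-up and embedding arguments for long odd cycles, should show that if for some colour $c$ the reduced graph supports a sufficiently weighted odd-cycle configuration through dense regular pairs, then $\chi$ contains a monochromatic $C_n$ in colour $c$. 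The contrapositive imposes structural restrictions on each density matrix which package into a continuous nonlinear programme: maximise the total normalised cluster weight subject to the colour-by-colour constraints.

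The core of the proof is solving this programme and extracting a matching stability statement. My plan is to proceed by induction on $k$: locate the colour $c^*$ achieving the largest "balanced cut" in the reduced graph, observe that its constraint forces approximate bipartiteness on a dominant vertex set, then apply the $(k-1)$-colour statement to each side. The induction should yield the exact optimum $2^{k-1}(n-1)$ and a rigidity conclusion: any near-optimal configuration is $o(1)$-close in edit distance to one in which the clusters split into $2^{k-1}$ nearly equal groups indexed by $\{0,1\}^{k-1}$ in exactly the lower-bound pattern. The correspondence promised in the abstract emerges upon adjoining an extra coordinate to account for colour $k$: each extremal colouring records, for every pair of parallel $(k-1)$-faces of $Q_k$, a distinguished ``bridging'' colour, and this data is precisely a perfect matching of $Q_k$.

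The main obstacle will be controlling the geometry of near-optima of the nonlinear programme: because the constraints are phrased in terms of densities of odd-cycle configurations rather than simple linear or quadratic invariants, standard KKT or convex-relaxation arguments will need to be supplemented with combinatorial lemmas to rule out degenerate extrema (for instance, colourings in which a single cluster serves several roles simultaneously). Once the stability theorem is established, the concluding step should be routine: absorb the $O(\eps N)$ ``irregular'' vertices into the appropriate cluster of the canonical configuration via a Pósa-type rotation-extension argument, and then use the extra $+1$ vertex together with one saturated block of size $n-1$ to close a monochromatic $C_n$ in colour $k$, contradicting our hypothesis.
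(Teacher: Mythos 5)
Your lower bound construction is correct and is precisely the Erd\H{o}s--Graham inductive colouring described in the paper (it corresponds to the ``standard'' perfect matching of $Q_k$ in which all matching edges are parallel). However, the upper-bound plan has two serious problems, one of scope and one of substance.

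\textbf{Scope.} The statement as you are attempting to prove it is actually \emph{false}: Day and Johnson showed that for any fixed odd $n$ there is $\eps=\eps(n)>0$ with $R_k(C_n)>(n-1)(2+\eps)^{k-1}$ once $k$ is large enough. Any valid proof must therefore restrict to $n$ sufficiently large relative to $k$, and this restriction must surface somewhere in the argument. The paper proves exactly this weaker form (Theorem~\ref{main}); your proposal never flags where the largeness of $n$ enters, and in particular your ``induction on $k$'' would need the base case to supply an $n_0(k)$ that grows with $k$, which it does not.

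\textbf{Substance.} The rigidity conclusion you predict --- that every near-optimal configuration is close to ``exactly the lower-bound pattern'' --- is wrong, and this is not a cosmetic issue; it is the central phenomenon the paper is built around. For $k\geq4$ there are extremal $k$-colourings that do not arise by doubling a $(k-1)$-colouring: equivalently, there are perfect matchings of $Q_k$ that do not split as unions of matchings on two parallel codimension-$1$ subcubes. In some of these colourings \emph{every} colour class contains a monochromatic clique of order $n-1$, hence no colour class is even approximately bipartite. This defeats the engine of your induction: the step ``locate a colour $c^*$ whose constraint forces approximate bipartiteness on a dominant vertex set, then recurse on the two sides'' has no valid choice of $c^*$ in those extremal colourings. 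The paper sidesteps this by not inducting on $k$ at all: it encodes the whole colouring by a profile vector in $\mathbb{R}^{3^k}$, translates the no-long-odd-connected-matching condition into a single quadratic inequality $F(x)\le 0$, and then solves the resulting nonlinear programme directly via compressions and the KKT conditions. The payoff is that the optimum set $O$ is identified all at once with the set of profiles of hypercube colourings (one per perfect matching of $Q_k$), and the stability statement is proved by compactness around that whole set rather than around a single canonical configuration. Your final absorption step also presupposes that the overflow vertex lands in a colour-$k$ clique, which is an artifact of the incorrect belief that colour $k$ always plays a distinguished role; in the correct argument one shows by pigeonhole that \emph{some} block $U_\sigma$ has at least $n$ vertices and then closes a monochromatic $C_n$ in colour $c(\sigma)$, whatever that colour is.

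In short: the regularity-plus-connected-matchings reduction you describe is the right first step and matches the paper, but the inductive stability mechanism is unsound because it cannot see the non-recursive extremal colourings, and the proposal would need to be reorganised around a global analysis of the nonlinear programme over all $3^k$ profile coordinates, as in the paper.
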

Note that the conjecture deals specifically with the case where $n$ is odd. Odd and even cycles behave rather differently in this context due to the fact that an even cycle is bipartite whereas an odd cycle is not. We mention that Erd\H{o}s and Graham~\cite{ErdGra} proved the bounds
\begin{equation}\label{Erd}
2^{k-1}(n-1)+1\leq R_k(C_n)\leq (k+2)!n,
\end{equation}
for all $k\geq2$ and all odd $n>3$. In this paper we prove the following. 

\begin{theorem}\label{main}
For any fixed $k\geq2$ and odd $n$ sufficiently large,
$$R_k(C_n)=2^{k-1}(n-1)+1.$$
\end{theorem}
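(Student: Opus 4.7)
The lower bound $R_k(C_n) \geq 2^{k-1}(n-1)+1$ is due to Erd\H{o}s and Graham, so the task reduces to establishing the matching upper bound: for $n$ odd and sufficiently large, every $k$-colouring of $K_N$ with $N = 2^{k-1}(n-1)+1$ contains a monochromatic $C_n$. The plan is to pass to a graphon-like limit via the regularity method, recast the existence of a monochromatic $C_n$ as a constrained nonlinear optimisation problem on weighted $k$-edge-colourings, solve this optimisation together with a stability version, and then lift the structural conclusion back to $K_N$ to find the cycle.

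First, fix a small $\varepsilon > 0$ depending on $k$ and apply the multicolour Szemer\'edi regularity lemma to a given $k$-colouring of $K_N$, yielding a partition $V_1, \ldots, V_m$ with $|V_i| \approx N/m$. Form the reduced $k$-coloured multigraph $H$ on $[m]$: for each $\varepsilon$-regular pair $(V_i, V_j)$ and each colour $c$ whose density on that pair exceeds a threshold $\eta$, include the edge $ij$ in colour $c$. A standard consequence of the regularity/blow-up machinery is that a monochromatic $C_n$ in colour $c$ can be embedded in $K_N$ whenever some non-bipartite component of the colour-$c$ subgraph of $H$ has total cluster mass at least $n + o(n)$. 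The obstruction to finding $C_n$ is therefore that, in every colour, every non-bipartite component of the reduced graph is small.

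This translates into the following extremal problem: what is the maximum value of $N/n$ for a $k$-edge-coloured weighted complete graph in which, for each colour $c$, the colour-$c$ subgraph decomposes as a bipartite part together with non-bipartite components each of total weight less than $n$? In the graphon limit this becomes a nonlinear optimisation problem on $k$-colourings of $[0,1]^2$, and I expect the optimum to equal $2^{k-1}$, with extremisers rigidly described by a combinatorial gadget: each vertex of the weighted structure receives a label in $\{0,1,\ast\}^k$ encoding, for each colour, which side of the bipartition it lies on (or whether it lies in a non-bipartite component), and the admissible label patterns correspond precisely to perfect matchings of the hypercube $Q_k$. The nested doubling construction underlying the Erd\H{o}s--Graham lower bound fits naturally into this framework, and one should verify conversely that every matching of $Q_k$ produces a valid extremal configuration.

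The final ingredient is a stability statement: if a $k$-colouring of $K_N$ with $N \geq (1-o(1)) \cdot 2^{k-1}(n-1)$ avoids a monochromatic non-bipartite component of mass at least $n$ in every colour, then the regularity partition is close in edit distance to a matching-indexed extremal configuration. Given such a structural description, the single extra vertex allowed by $N = 2^{k-1}(n-1)+1$ can be tracked: no matter where it is inserted into the near-extremal structure, it must enlarge some monochromatic non-bipartite component past the threshold $n$, and the embedding lemma then furnishes the desired monochromatic $C_n$. The principal obstacle is proving the combined optimisation-plus-stability result uniformly in $k$; the recursive nature of the extremisers suggests an induction on $k$, but the induction must carry a stability hypothesis strong enough to control the interaction of all $k$ colours simultaneously, which is the main technical challenge.
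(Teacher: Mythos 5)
Your proposal follows the paper's high-level strategy almost exactly: apply the multicolour regularity lemma to reduce to a reduced graph with no large monochromatic odd connected matching, encode each cluster by a label in $\{0,1,\ast\}^k$ recording its role in each colour's bipartite/non-bipartite decomposition, translate the obstruction into a nonlinear constraint on the resulting profile vector in $\mathbb{R}^{3^k}$, classify extremisers as perfect matchings of $Q_k$, prove a stability version, and lift back. This is the paper's roadmap almost verbatim, down to the correspondence with hypercube matchings.

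The genuine gap is in the step you yourself flag as ``the main technical challenge'': how to actually solve the optimisation and prove stability uniformly in $k$. You propose ``an induction on $k$'' justified by ``the recursive nature of the extremisers'', but this premise is false: for $k\geq 4$ there are perfect matchings of $Q_k$ that do \emph{not} decompose as a union of two matchings on codimension-$1$ subcubes, so the family of extremisers is not recursively generated, and an induction on $k$ cannot reach all of them. The paper instead attacks the optimisation directly: it applies a compression operation to reduce to points whose constraint $F(x)=0$ is spherical, then invokes the Karush--Kuhn--Tucker conditions with Slater's qualification (plus a bespoke combinatorial inequality over distinguishable sets) to solve the spherical problem exactly, and derives stability from compactness via Cantor's intersection theorem rather than from any inductive scheme. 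Your final ``track the extra vertex'' step also compresses a substantial argument (assigning every leftover vertex to a clique via an admissible-labelling analysis of an auxiliary multigraph, then pigeonhole and Bondy pancyclicity), but that part is at least in the right spirit; the optimisation/stability engine is where the proposal as written would stall.
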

We therefore resolve Conjecture \ref{bon} for large $n$. We will in fact prove a stability-type strengthening of this result (see Theorem \ref{conj} below). Very recently Day and Johnson~\cite{DayJoh} showed that in the opposite regime, where we fix an odd $n$ and let $k$ be sufficiently large, one in fact has $R_k(C_n)>(n-1)(2+\eps)^{k-1}$ for some $\eps=\eps(n)>0$, and so Conjecture \ref{bon} is false when $n$ is small with respect to $k$. The qualification that $n$ is sufficiently large in Theorem~\ref{main} is therefore necessary, however due the use of compactness arguments in the proof, we obtain no effective bound on how large $n$ must be with respect to $k$.

In view of Theorem \ref{main}, let us call a $k$-colouring of the complete graph on $2^{k-1}(n-1)$ vertices which does not contain a monochromatic copy of $C_n$ an \emph{extremal $k$-colouring}. The lower bound in \eqref{Erd} was established by observing that one can naturally construct extremal $k$-colourings by induction. Indeed if there exists a $k$-colouring of the edges of the complete graph $K_m$ with no monochromatic $C_n$, then by joining two such copies of $K_m$ by edges of colour $k+1$, one obtains a $(k+1)$-colouring of $K_{2m}$ with no monochromatic $C_n$. The base construction, for $k=1$, is simply a monochromatic clique of size $n-1$. It was believed that all extremal $k$-colourings come from such a doubling argument. We show that this is not the case, providing a classification of extremal $k$-colourings which exposes a surprising correspondence between extremal $k$-colourings and perfect matchings in the $k$-dimensional hypercube $Q_k$. 

 The first breakthrough towards Conjecture \ref{Erd} was made by \L uczak~\cite{Lucz} who used the regularity method to show that the $k=3$ case holds asymptotically i.e. that for $n$ odd,
$$
R_3(C_n)=4n+o(n)\ \text{as}\ n\to\infty.
$$
\L uczak's method of applying regularity in this setting has proven extremely fruitful (see e.g. \cite{FigLucz,Gya, Exact, Lucz, Multi}) and has since become a standard tool. We will come to describe the method in more detail as it provides the starting point for the present paper.
 
Building on \L uczak's ideas, Kohayakawa, Simonovits and Skokan~\cite{Exact} paired the regularity method with stability arguments to resolve Conjecture \ref{bon} for $k=3$ and $n$ large. The case where $k\geq4$ has since remained open. Progress was made by \L uczak, Simonovits and Skokan~\cite{Multi} who showed that for $k\geq4$ and odd $n$,
$$
R_k(C_n)\leq k2^kn+o(n)\ \text{as}\ n\to\infty.
$$

To conclude this section we give a broad overview of the proof method of Theorem~\ref{main}. Let $\mathcal{G}_n$ denote the (finite) class of all cliques that admit a $k$-colouring with no monochromatic copy of $C_n$. Determining $R_k(C_n)$ is then equivalent to determining the maximum $N$ such that $K_N\in \mathcal{G}_n$. Using the regularity method, we relate this problem to finding the maximum $\ell_1$-norm of an element in a certain compact subset $\mathcal{S}$ of $\mathbb{R}^{3^k}$. This allows us to import analytic and topological tools in support of our proof. The relation is such that maximal elements of $\mathcal{S}$ correspond to extremal $k$-colourings for Theorem \ref{main}. Moreover by classifying the extremal points of $\mathcal{S}$ we can classify the extremal $k$-colourings and prove a stability type strengthening of Theorem~\ref{main}, generalising the main result from~\cite{Exact}. We show that each perfect matching of the hypercube $Q_k$ gives rise to a class of extremal $k$-colourings. On the other hand, any extremal $k$-colouring must be `close' to one such construction. We defer precise statements to the Section \ref{decompsec}. The number of essentially different classes of extremal $k$-colourings is equal to the number of equivalence classes of perfect matchings in $Q_k$ with respect to its automorphism group and this number is doubly exponential in $k$. Such a plethora of extremal constructions is usually forbidding when trying to establish stability type results, we believe that the fact we can overcome this obstacle is largely down to the analytic perspective.

\section{Notation and Terminology}
Let us collect some notation that we use throughout the paper. For $k\in \mathbb{N}$, we let $[k]$ denote the set $\{1, \ldots, k\}$. For a set $S$, we let $\binom{S}{2}$ denote the set of all unordered pairs of distinct elements of $S$.

All graphs considered here will be finite. For a graph $G=(V,E)$, we let $v(G)=\abs{V}$ and $e(G)=\abs{E}$. For $v\in V$, we let $N_G(v)$ denote the neighbourhood of $v$ in $G$ and let $\delta(G)$ denote the minimum degree of $G$. For $X\subseteq V$ we denote by $G[X]$ the subgraph of $G$ induced by the vertices of $X$. For disjoint subsets $A, B\subseteq V$, we denote by $G[A,B]$ the bipartite graph with vertex set $A\cup B$ and edge set $\{\{a,b\}\in E: a\in A, b\in B\}$, and we let $e_G(A,B)$ denote the size of this set. In the case where $A=\{v\}$ a singleton, we write $G[v,B]$ instead of $G[\{v\}, B]$. All subscripts in the above notation may be suppressed if they are clear from the context. At times we may slightly abuse notation by writing $v\in G$ and $\{x,y\}\in G$ in lieu of $v\in V(G)$ and $\{x,y\}\in E(G)$ respectively. 

Let $W=w_0w_1\ldots w_\ell$ be a walk in $G$ (that is a sequence of vertices $w_0,w_1,\ldots w_\ell$ such that $\{w_i,w_{i+1}\}$ is an edge of $G$ for all $i<\ell$). If all of the $w_i$ are distinct then we call $W$ a \emph{path of length $\ell$}. We may also refer to $W$ as a $w_0w_\ell$-path to distinguish its endpoints. If all the $w_i$ are distinct except $w_0=w_\ell$ then we call $W$ a \emph{cycle of length $\ell$}. We will also concatenate walks in the natural way. For example if $U=u_0\ldots u_m$ is a walk in $G$ such that $u_m=w_0$, we let $UW$ denote the walk $u_0\ldots u_mw_1\ldots w_\ell$. If $x$ is a vertex such that $\{w_\ell,x\}$ is an edge of $G$ then we let $Wx$ denote the walk $w_0\ldots w_\ell x$.

A \emph{$k$-coloured graph} is a graph $G=(V,E)$ equipped with some function $\varphi: [k]\to E$. Furthermore, for $i\in[k]$, we let $G_i$ denote the subgraph $(V, \varphi^{-1}\{i\})$ of $G$. We call $G_i$ the $i$th \emph{colour class} of $G$. For a digraph $F$ and vertex $v\in V(F)$, we let $d^{-}(v)$, $d^{+}(v)$ denote the indegree and outdegree of $v$ respectively.

For $x\in\mathbb{R}^d$ we let $\norm{x}$ denote the $\ell_1$-norm of $x$ i.e. $\norm{x}=\sum_{i=1}^d\abs{x_i}$. Furthermore, given $\eps>0$, we let $B_\eps(x):=\{z\in\mathbb{R}^d: \norm{z-x_0}<\eps\}$, the open ball of radius $\eps$ centred at $x$. We let $supp(x)$ denote the support of $x$. 
 
 In the statements of theorems and lemmas it will be useful to use the notation $\alpha\ll\beta$ to mean that there is an increasing function $\alpha(x)$ so that the statement is valid for $0<\alpha<\alpha(\beta)$. When we need to refer to this function at a later stage, we include the number of the lemma (or theorem) the function appears in as a subscript. For example, $\delta_{\ref{embed}}(x)$ denotes the implied function $\delta(x)$ from Lemma~\ref{embed}. Throughout the paper we omit the use of floor and ceiling symbols where they are not crucial.

\section{A Graph Decomposition, Extremal Colourings and Stability}\label{decompsec}
In this section we describe the extremal colourings and give precise statements of the stability results referred to in the Introduction. We also introduce some key concepts and results that will be used throughout the paper and give a more detailed overview of our proof methods. We begin by introducing a way of decomposing an arbitrary $k$-coloured graph. This decomposition will play a central role for us and is similar to a decomposition introduced in~\cite{Multi}. 

\subsection{A Graph Decomposition}\label{sec:decomp}
Let $G$ be a $k$-coloured graph. For each $i\in[k]$, we write $G_i=G_i'\cup G_i''$, where $G_i'$ is the union of the bipartite components of $G_i$ and $G_i''$ is the union of the non-bipartite components of $G_i$. For each $i\in[k]$, write $V(G_i')=V_0^i\cup V_1^i$ where $V_0^i$ and $V_1^i$ are the vertex classes of a bipartition of $G_i'$ and set $V_\ast^i=V(G_i'')$. For $\tau\in\{0,1,\ast\}^k$, let $V_\tau=\bigcap_{j=1}^k V_{\tau_j}^j$ and note that
\begin{equation}\label{union}
V(G)=\bigcup_{\tau\in\{0,1,\ast\}^k}V_\tau,\ \ \mbox{a disjoint union.}
\end{equation}
We call $(V_\tau: \tau\in\{0,1,\ast\}^k)$ a $\emph{profile partition}$ of $G$ and we call the corresponding vector $(\abs{V_\tau}: \tau\in\{0,1,\ast\}^k)$ a $\emph{profile}$ of $G$. We will often denote a profile of $G$ by $x(G)$. Note that $G$ may admit multiple profile partitions since we made an arbitrary choice in choosing the bipartition $V(G_i')=V_0^i\cup V_1^i$ for each $i\in[k]$.

\subsection{Extremal Colourings and the Hypercube}
For $k\in \mathbb{N}$, we let $Q_k$ denote the $k$-dimensional hypercube i.e. the graph on vertex set $\{0,1\}^k$ and edge set consisting of pairs differing in exactly one coordinate. It will be useful to think of an element $\tau\in\{0,1,\ast\}^k$ as a subcube of the $k$-dimensional hypercube $Q_k$ via the correspondence
$$\tau\leftrightarrow Q(\tau):=\{c\in\{0,1\}^k: c_j=\tau_j\  \mbox{if}\ \tau_j\in\{0,1\}\}.$$
In other words we think of a coordinate $j$ where $\tau_j=\ast$ as a `missing bit' and let $Q(\tau)$ be the set of all possible ways of filling in these bits. For example, 
\begin{center}
if $k=3$ and $\tau=(0,\ast,\ast)$, then $Q(\tau)=\{(0,0,0), (0,0,1), (0,1,0), (0,1,1)\}$.
\end{center}
We define the \emph{weight} of $\tau$ to be the size of the set $\{i\in[k]: \tau_i=\ast\}$ (i.e. the number of missing bits) and denote it by $\omega(\tau)$. Note that $\abs{Q(\tau)}=2^{\omega(\tau)}$. In the language of the hypercube, $\omega(\tau)$ is the dimension of the subcube $Q(\tau)$. In particular if $\omega(\tau)=1$, then we think of $Q(\tau)$ as an \emph{edge} of $Q_k$.
 
We can now describe a class of extremal $k$-colourings in terms of perfect matchings in $Q_k$. Let $\mathcal{M}$ be a perfect matching of $Q_k$. We express each edge of $\mathcal{M}$ as an element (of weight 1) of $\{0,1,\ast\}^k$. Let $G=K_N$ where $N=2^{k-1}(n-1)$ and let $V(G)=\bigcup_{\tau\in\mathcal{M}}V_\tau$ be a partition of $V(G)$ where $\abs{V_\tau}=n-1$ for all $\tau\in\mathcal{M}$. For each $\tau\in \mathcal{M}$, colour all edges in $G[V_\tau]$ with the colour $i$, where $i$ is the coordinate for which $\tau_i=\ast$. For $\tau,\sigma\in\mathcal{M}$, arbitrarily colour the edges between $V_\tau$ and $V_\sigma$ with any colour $j$ for which $\{\sigma_j, \tau_j\}=\{0,1\}$ (i.e. edges $\tau,\sigma$ lie in opposite subcubes of $Q_k$ of codimension 1 separated by the $j$th coordinate). It follows that each colour class of such a colouring is the disjoint union of cliques of size $n-1$ and a bipartite graph and therefore contains no monochromatic copy of $C_n$. We call such a colouring a \emph{hypercube colouring with clique size $n-1$}.

If we inductively construct a perfect matching on $Q_k$ by taking two perfect matchings on a disjoint pair of subcubes of codimension 1 and consider the associated hypercube colouring, we recover the inductive colourings of Erd\H{o}s and Graham~\cite{ErdGra} described in the Introduction. However for $k\geq4$, not all perfect matchings of $Q_k$ decompose as the union of two matchings on a pair of codimension 1 subcubes, and so we obtain some genuinely new colourings. In particular, a novel feature that appears for $k\geq4$ colours is that there exist extremal $k$-colourings that contain a monochromatic cliques of size $n-1$ in all $k$ possible colours.
 
\subsection{Stability} 
In this subsection we state a theorem to the effect that the hypercube colourings considered in the previous subsection are the only extremal $k$-colourings for our problem. Moreover we assert that almost extremal colourings are in some sense `close' to a~hypercube colouring. Let us make this more precise. 

\begin{definition}\label{def:close}
Let $G$ and $H$ be $k$-coloured graphs with $V(H)\subseteq V(G)$. Let $\eps>0$, then we say that $G$ is \emph{$\eps$-close} to $H$ if $\abs{G_i\triangle H_i}\leq \eps v(G)^2$ for all $i\in[k]$.
\end{definition}
Informally we may say that $G$ and $H$ as above are close in \emph{edit distance}. We may now state the main result of this paper.
\begin{theorem}\label{conj}
Let $k\geq2$, let $\frac{1}{n}\ll\eta\ll\eps\ll1$, where $n$ is odd, and let $N>(2^{k-1}-\eta)n$. Then if $G=K_N$ is $k$-coloured with no monochromatic copy of $C_n$, then $N\leq2^{k-1}(n-1)$ and there exists a hypercube colouring $H$ such that $G$ is $\eps$-close to $H$.
\end{theorem}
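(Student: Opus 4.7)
The plan is to exploit the analytic/optimisation perspective foreshadowed in the introduction. The starting point is to apply a multicolour regularity lemma to $G = K_N$, producing an equitable partition $V(G) = V_1 \cup \cdots \cup V_L$ into clusters of size $m \approx N/L$ such that almost all cluster pairs are $\eps'$-regular in every colour. Replacing $G$ by its reduced $k$-coloured cluster graph $R$, a standard embedding argument (of the type encoded by Lemma~\ref{embed}) will show that if any colour class of $R$ contains a sufficiently connected odd structure on at least roughly $n/m$ clusters, then $G$ already contains a monochromatic $C_n$. Applying the decomposition of Section~\ref{sec:decomp} to $R$ we obtain a profile partition $(V_\tau : \tau \in \{0,1,\ast\}^k)$ and its profile vector $x(R) \in \mathbb{R}^{3^k}$; after normalisation by $n-1$ the $\ell_1$-norm equals $(1+o(1))N/(n-1)$.

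Let $\mathcal{S} \subseteq \mathbb{R}^{3^k}$ be the compact set of normalised profile vectors that can arise as $n \to \infty$ from a reduced graph without the forbidden odd structure. The central claim to establish is that $\max_{x \in \mathcal{S}} \norm{x} = 2^{k-1}$, with equality if and only if $x$ is the profile of a hypercube colouring. For the upper bound, the plan is a two-step convexity argument: first, show that the support of any maximiser $x^\ast$ consists only of $\tau$ with $\omega(\tau) \leq 1$, since spreading mass onto subcubes of lower codimension strictly increases $\norm{x^\ast}$ unless forbidden by the feasibility constraint coming from odd walks in a single colour class; second, show that the remaining weight-$1$ coordinates in $\mathrm{supp}(x^\ast)$ must form a perfect matching of $Q_k$, since any two active edges $\tau,\sigma$ must respect the hypercube adjacency needed to bipartitely colour the graph between $V_\tau$ and $V_\sigma$ without creating a monochromatic odd cycle.

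Stability then follows by compactness. Suppose for contradiction there is a sequence of counterexamples $(n_s, G_s)$ with $N_s > (2^{k-1} - \eta_s)n_s$, $\eta_s \to 0$, such that no $G_s$ is $\eps$-close to any hypercube colouring. Applying the reduction above to each $G_s$ yields normalised profile vectors $\tilde x_s \in \mathcal{S}$ with $\norm{\tilde x_s} \to 2^{k-1}$; by compactness of $\mathcal{S}$, a subsequence converges to some maximiser $x^\ast$, which by the classification corresponds to a perfect matching $\mathcal{M}$ of $Q_k$. Relabelling clusters if necessary, one then argues that $\ell_1$-proximity of $\tilde x_s$ to $x^\ast$ together with the regularity of the partition forces $G_s$ to agree with the associated hypercube colouring on all but $O((\eta_s + \eps') N_s^2)$ edges, contradicting the assumption for sufficiently small parameters and sufficiently large $n$. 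The bound $N \leq 2^{k-1}(n-1)$ is the special case $\eta = 0$ of this analysis.

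The main obstacle is the classification of the $\ell_1$-maximisers of $\mathcal{S}$ in the second paragraph, which is precisely what exposes the link between the Ramsey problem and perfect matchings of $Q_k$. The doubly exponential number of maximisers precludes brute-force enumeration; the hope is instead that the extremal structure emerges cleanly from local perturbations combined with the combinatorial constraint that the graph induced between any two active subcubes must be bipartite in each colour that connects them. A secondary, more technical obstacle is the reverse translation in the stability step: passing from analytic proximity of $\tilde x_s$ to $x^\ast$ back to edit-distance closeness in $G_s$ requires a cluster-by-cluster re-identification, taking care that non-regular pairs and low-density colour classes contribute only lower-order error when compared to a concrete hypercube colouring.
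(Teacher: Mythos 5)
Your proposal follows the paper's high-level strategy (regularity $\to$ profile vector $\to$ optimisation $\to$ compactness $\to$ reverse translation), so the architecture is right, but three of the load-bearing steps as you describe them either fail or paper over the genuine difficulties.

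\textbf{The classification of maximisers is harder than you suggest, and your first step is actually false.} You claim that the support of any $\ell_1$-maximiser of $\mathcal{S}$ consists only of $\tau$ with $\omega(\tau)\leq 1$. This is not true for the optimisation problem cut out by the analytic constraints alone: the paper's set $O$ of maximisers contains vectors whose support is a decomposition of $Q_k$ into weight-$1$ \emph{and} weight-$2$ subcubes, with entry $2$ on each weight-$2$ coordinate. These weight-$2$ configurations attain $\norm{x}=2^{k-1}$ and are perfectly feasible for the quadratic constraint $F(x)\leq 0$. Ruling them out requires an \emph{extra} combinatorial input that is not visible from the optimisation problem: a normalised entry of $\approx 2$ on a weight-$2$ coordinate means the corresponding part of the reduced graph is a $2$-coloured near-complete graph on $\approx 2n/t$ clusters, and a two-colour Ramsey-type result (the paper's Lemma~\ref{2stab}) then produces a monochromatic connected matching of forbidden order. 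Your ``spreading mass onto lower-codimension subcubes strictly increases the norm'' heuristic does not capture this; there is no purely local perturbation that strictly improves a weight-$2$ maximiser within the feasible set.

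\textbf{The optimisation itself needs more than ``convexity'' and ``local perturbations''.} The quadratic form $F$ is indefinite, so the feasibility region $\{F\leq 0\}$ is not convex, and a generic maximiser can have an arbitrary distinguishable support. The paper's argument is genuinely two-stage: first a combinatorial compression operator $(\pi,\rho)\mapsto x(\pi,\rho)$ moves mass between indistinguishable coordinates so that the auxiliary digraph $D(x)$ becomes a union of stars, which converts $F(x)=0$ into a \emph{spherical} constraint (Lemma~\ref{sphere}); only then do KKT/Slater conditions apply (Lemma~\ref{lag}), and even then one needs a bespoke inequality $\sum\alpha_i+\sqrt{m\sum\alpha_i^2}\leq\sum 2^{\alpha_i}$ for integers $\alpha_i\geq 2$ (Lemma~\ref{shift}) together with the packing bound $\sum 2^{\omega(\tau)}\leq 2^k$. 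None of this is implied by the general shape of a convexity argument; it is where the hypercube matching structure actually emerges, and a proposal that treats it as a one-line perturbation claim has a gap precisely at the point the introduction flags as the main difficulty.

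\textbf{The reverse translation is not a ``cluster-by-cluster re-identification''.} Once you know the reduced profile is $\ell_1$-close to the profile of \emph{some} hypercube colouring, you still have to decide \emph{which} one, and this choice is not rigid: the same profile vector is shared by many non-isomorphic hypercube colourings (you can independently flip any colour-$j$ component), and the colouring between two cliques $\widetilde{W}_\sigma,\widetilde{W}_\tau$ in $G$ can use any subset of $\Delta(\sigma,\tau)$. The paper resolves this by building a multigraph $\Phi$ on the matching $\mathcal{M}$ recording which colours actually appear between parts, proving $\Phi$ has no monochromatic odd cycle by an explicit long-path embedding, and then iteratively constructing an \emph{admissible labelling} of $\Phi$ via coordinate flips (Lemma~\ref{admis}). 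This is the step that lets you declare a concrete target $H$; without some version of it your contradiction in the compactness argument is comparing $G_s$ to a single fixed colouring that it has no reason to agree with. Similarly, the bound $N\leq 2^{k-1}(n-1)$ is not literally the ``$\eta=0$ special case'' — it comes after associating each leftover vertex to a clique (Lemma~\ref{fin}) and applying pigeonhole plus Bondy's pancyclicity theorem, which is additional work beyond the analytic optimum.
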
 
 Note that Theorem~\ref{main} follows as an immediate corollary. The $k=3$ case of Theorem~\ref{conj} was proved in \cite{Exact} where the two classes of colourings the authors consider can be viewed as the colourings that arise from the two isomorphism classes of perfect matchings in~$Q_3$. An~interesting feature of Theorem~\ref{conj} is that it deals with a wide variety of extremal colourings. Indeed if $\mathcal{M}_1$ and $\mathcal{M}_2$ are perfect matchings of $Q_k$ that lie in distinct equivalence classes under the action of the automorphism group of $Q_k$, then it's not difficult to show that there are hypercube colourings associated to $\mathcal{M}_1$ that are not isomorphic to any hypercube colouring associated to $\mathcal{M}_2$. It is also interesting to note that even though we can prove a~stability statement around hypercube colourings, the structure of these colourings is not well understood. This is simply due to the fact that the structure of perfect matchings in the hypercube is not well understood. Indeed, even enumerating the perfect matchings (or their equivalence classes) in~$Q_k$ is a~well-studied and difficult problem. Let $f(k)$ be the number of equivalence classes of perfect matchings in $Q_k$. It is clear that $f(3)=2$ and so we obtain two essentially different extremal $3$-colourings as in~\cite{Exact}. Graham and Harary~\cite{GraHar} showed that f(4)=8 and recently {\"O}sterg{\aa}rd and Pettersson~\cite{OstPet} determined (with a large amount of computer time) $f(5)$, $f(6)$ and $f(7)$. The function $f(k)$ grows rather rapidly; it is amusing to note that already $f(7)=607158046495120886820621$ and so we have this many essentially different classes of extremal $7$-colourings. It was shown in~\cite{clark1997number} that the number of perfect matchings in $Q_k$ is $[(1+o(1))k/e]^{2^{k-1}}$ (although this result in fact follows from a theorem in~\cite[p.312]{plummer1986matching}). Since the automorphism group of $Q_k$ has size $k!2^k$ it follows that $f(k)=[(1+o(1))k/e]^{2^{k-1}}$ also.
 
\subsection{Proof of Theorem~\ref{conj}: An Overview}
The regularity method, discussed briefly in the introduction, plays a central role in our proof method. We include an~informal discussion of the method here, deferring details until later. We start with a~definition.

\begin{definition}\label{def:conmat}
Let $F$ be a connected graph whose largest matching saturates $m$ vertices, then we call $F$ a \emph{connected matching of order $m$}. We distinguish a particular matching of~largest size $M_F$ in $F$ and refer to an edge of $M_F$ as a \emph{matching edge} of $F$. If in addition $F$ is non-bipartite, we call $F$ an \emph{odd} connected matching of order $m$. 
\end{definition}

The idea behind the regularity method is as follows. Suppose that $G$ is a $k$-coloured complete graph on $N$ vertices. Let $G_1,\ldots, G_k$ be its colour classes. We apply the multicolour version of the Regularity Lemma~\cite{Szem} and obtain a regular partition of the vertex set $V(G)$ into $t+1$ classes $V(G)= V_0\cup \ldots\cup V_t$.
We construct an auxiliary graph $R$ with vertex set ${1,...,t}$ and the edge
set formed by pairs $\{i, j\}$ for which $(V_i, V_j )$ is regular with respect to $G_1, \ldots, G_k$. We colour each edge $\{i,j\}$ in $R$ by the majority colour in the pair $(V_i, V_j )$. The crucial point is that if $R$ contains a monochromatic odd connected matching of order greater than $m$, then $G$ contains a monochromatic cycle $C_\ell$ where $\ell$ can take essentially any odd value smaller than $mN/t$. It follows that if $G$ contains no monochromatic copy of $C_n$, then $R$ cannot contain a monochromatic odd connected matching of order larger than $nt/N$. The advantage of this perspective is that forbidding a large connected matching is far more restrictive than forbidding a cycle of a given length. Indeed a cycle is itself an example of a~connected matching, and so if a graph contains no connected matching of order greater than $m$ then it contains no cycle of length greater than $m$. The following theorem of Erd\H{os} and Gallai~\cite{EG} shows that this is a very strict condition.
\begin{theorem}\label{EG}
Let $m\geq3$. If $G$ is a graph which contains no cycle of length greater than $m$, then $e(G)\leq m(v(G)-1)/2$. 
\end{theorem}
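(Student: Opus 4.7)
My plan is to prove the bound by induction on $n=v(G)$. The base case $n\leq m$ is immediate since $e(G)\leq\binom{n}{2}=n(n-1)/2\leq m(n-1)/2$. For the inductive step, assume $n>m$ and that the bound holds for all smaller orders, and let $G$ be a graph on $n$ vertices with circumference at most $m$.

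The first move is a block-type decomposition reducing to the $2$-connected case. If $G$ has components $G_1,\ldots,G_s$ with $s\geq 2$, then applying the induction hypothesis to each yields $e(G)\leq\sum_i m(v(G_i)-1)/2=m(n-s)/2\leq m(n-1)/2$. If $G$ is connected but has a cut vertex $w$, split $G$ at $w$ into subgraphs $G_1,G_2$ sharing only $w$, so that $v(G_1)+v(G_2)=n+1$ and $e(G_1)+e(G_2)=e(G)$; each $G_i$ has circumference at most $m$ and fewer than $n$ vertices, so the induction hypothesis delivers $e(G)\leq m(v(G_1)-1)/2+m(v(G_2)-1)/2=m(n-1)/2$.

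Hence we may assume $G$ is $2$-connected. If there is a vertex $v$ with $d(v)\leq m/2$, I would remove it: $G-v$ has $n-1$ vertices and circumference at most $m$, so the induction hypothesis gives $e(G)=e(G-v)+d(v)\leq m(n-2)/2+m/2=m(n-1)/2$. Otherwise $\delta(G)>m/2$, and the goal becomes to exhibit a cycle of length greater than $m$, which contradicts the hypothesis on $G$. Here I would invoke the classical fact (due essentially to Dirac) that every $2$-connected graph has circumference at least $\min(2\delta,n)$; since $\delta>m/2$ forces $2\delta\geq m+1$, and $n\geq m+1$ in our situation, this produces a cycle of length at least $m+1$ as required.

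The main obstacle is this final circumference lemma in the $2$-connected case. I would establish it by a standard rotation argument: take a longest cycle $C$ in $G$; if $|C|<n$ and $|C|<2\delta$, then $2$-connectivity (via Menger's theorem) supplies an external vertex $u$ together with two internally disjoint $u$-$C$ paths, whose attachment points on $C$ divide $C$ into two arcs, one of which combines with these paths to give a strictly longer cycle, contradicting the choice of $C$. Everything else in the proof is bookkeeping around the induction, so the entire argument really rests on this rotation step together with the correct split into high- and low-minimum-degree regimes calibrated to the threshold $m/2$.
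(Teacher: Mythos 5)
The paper does not prove this statement; it is quoted directly as a classical result of Erd\H{o}s and Gallai~\cite{EG} and used as a black box, so there is no internal proof to compare against. Your high-level reduction is sound: induct on $v(G)$, split off components and cut vertices, delete a vertex of degree at most $m/2$, and in the remaining $2$-connected case with $\delta(G)>m/2$ and $n>m$ invoke Dirac's circumference bound $\min(2\delta,n)$ to force a cycle longer than $m$. All the arithmetic in the reduction checks out, and this is a perfectly legitimate route to the Erd\H{o}s--Gallai bound, provided Dirac's circumference theorem is taken as known.

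The gap is in your sketch of Dirac's circumference theorem. Given a longest cycle $C$, an external vertex $u$, and two internally disjoint $u$-$C$ paths $P_1,P_2$ meeting $C$ at $a,b$, the two candidate cycles have lengths $|P_1|+|P_2|+\ell_1$ and $|P_1|+|P_2|+\ell_2$ where $\ell_1+\ell_2=|C|$ are the arc lengths. If $|P_1|+|P_2|\leq\min(\ell_1,\ell_2)$ (for instance, $u$ adjacent to two nearly antipodal vertices of $C$), then \emph{neither} cycle is longer than $C$, so the rotation does not contradict maximality. Tellingly, the sketch never uses the hypothesis $|C|<2\delta$, which is the whole content of the theorem. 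The genuine proof works with a longest path and a pigeonhole count on the neighbour positions of its two endpoints, or with a more careful Menger-type choice of paths whose attachment points cut off a long arc; the naive rotation is not enough. If you cite Dirac's theorem as a known result the rest of the argument is complete; if you mean to prove it from scratch, that step needs the real argument.
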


 The price one pays is that $R$ is not a complete graph, however it can be chosen to be as dense as one likes. We are now able to state a theorem that is a major stepping stone toward the proof of Theorem~\ref{conj}.

\begin{theorem}\label{reduced}
Let $k\geq2$ and let $\frac{1}{n}\ll\del\ll\eps\ll1$, where $n$ is odd. If $G$ is a $k$-coloured graph with $v(G)=2^{k-1}n$ and $e(G)\geq(1-\delta)\binom{v(G)}{2}$ containing no monochromatic odd connected matching of order $\geq(1+\delta)n$, then for any choice of profile $x(G)$ of $G$, there exists a hypercube colouring $H$ with profile $x(H)$ satisfying
$$
\|x(G)-x(H)\|\leq\varepsilon n.
$$  
\end{theorem}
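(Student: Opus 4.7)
The plan is to recast the combinatorial condition on $G$ as a nonlinear optimisation problem over a compact subset $\mathcal{S}\subseteq\mathbb{R}^{3^k}_{\ge 0}$ whose $\ell_1$-maximisers are exactly the rescaled profiles of hypercube colourings, and then deduce the conclusion by a compactness argument. This follows the outline given in the Introduction in which $\mathcal{S}$ is the mentioned compact set.

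The first step is to extract profile constraints on $x=x(G)$ from the hypotheses. If $\omega(\tau)=0$ then no colour is permitted on the edges inside $V_\tau$ (since $\tau_i\in\{0,1\}$ places $V_\tau$ entirely in one side of the colour-$i$ bipartition), so the density hypothesis forces $x_\tau=O(\sqrt{\delta}\,n)$. For each colour $i$, each connected component of $G_i''$ has matching number at most $(1+\delta)n/2$, and Theorem~\ref{EG} combined with the near-completeness of $G$ upgrades this to a per-component size bound of roughly $(1+\delta)n$. If $\omega(\tau)\ge 2$ the edges inside $V_\tau$ may only use the $\omega(\tau)$ colours with $\tau_i=*$, and each such colour class on $V_\tau$ is itself subject to the component-size bound, forcing the total mass on $\tau$'s of weight $\ge 2$ to be small. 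Together with $\sum_\tau x_\tau = v(G)$, these constraints identify a compact feasible region $\mathcal{S}_n\subset\mathbb{R}^{3^k}_{\ge 0}$ within which $x(G)$ must lie up to an additive error of size $O(\delta n)$.

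Rescaling $y:=x/n$ and passing to the limit $n\to\infty$ yields a compact set $\mathcal{S}\subset\mathbb{R}^{3^k}_{\ge 0}$. Each perfect matching $\mathcal{M}$ of $Q_k$ gives rise to a feasible vector $y^{\mathcal{M}}\in\mathcal{S}$ taking value $1$ on the weight-one coordinates corresponding to edges of $\mathcal{M}$ and $0$ elsewhere, with $\norm{y^{\mathcal{M}}}=2^{k-1}$. The core technical claim is that the $\ell_1$-maximisers of $\mathcal{S}$ are precisely the vectors $y^{\mathcal{M}}$. I would prove this by explicit local perturbations: mass on weight-$0$ or weight-$\ge 2$ coordinates can be shifted to weight-one coordinates while strictly increasing $\norm{y}$ and maintaining feasibility; and among vectors supported on weight-one coordinates, the per-component bound forces the support to be a matching in $Q_k$, with an augmenting-path-type exchange showing that non-matching supports cannot be maximal. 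Since the resulting set of maximisers is finite, a standard compactness argument yields: for every $\eps'>0$ there exists $\delta'>0$ such that any $y\in\mathcal{S}$ with $\norm{y}\ge 2^{k-1}-\delta'$ satisfies $\norm{y-y^{\mathcal{M}}}\le\eps'$ for some $\mathcal{M}$. Applying this to $y(G)/n$ (which lies within $O(\delta)$ of $\mathcal{S}$ and has $\ell_1$-norm $2^{k-1}$) and rescaling by $n$ yields the desired hypercube colouring $H$ with $\norm{x(G)-x(H)}\le\eps n$.

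The main obstacle is the classification of maximisers. The set $\mathcal{S}$ is intricate because the per-component size bounds are not a priori linear functions of the profile coordinates: they depend on how the non-bipartite colour-$i$ components partition the union of those $V_\tau$ with $\tau_i=*$. A key preliminary reduction is to show that at (near-)optimum each such component may be taken to coincide with a single $V_\tau$ up to negligible error, after which the perturbation arguments become rigorous and expose the correspondence between extremal $k$-colourings and perfect matchings of $Q_k$ announced in the Introduction.
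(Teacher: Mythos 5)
Your overall strategy matches the paper's: derive profile constraints, recast as a nonlinear optimisation over a compact set, classify the maximisers, and invoke compactness. However, there is a genuine gap in the central claim about maximisers.

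You assert that the $\ell_1$-maximisers of the analytic feasible region $\mathcal{S}$ are precisely the vectors $y^{\mathcal{M}}$ supported on perfect matchings of $Q_k$, and that ``mass on weight-$0$ or weight-$\ge 2$ coordinates can be shifted to weight-one coordinates while strictly increasing $\norm{y}$ and maintaining feasibility.'' This is false. The constraints obtainable from the profile alone (non-negativity, weight-one entries $\le 1$, vanishing of incompatible products, and the quadratic bound coming from the Erd\H{o}s--Gallai theorem applied to each $G_i''$) admit maximisers that are not supported on perfect matchings. Concretely, for $k=2$, the vector with $y_{(*,*)}=2$ and all other coordinates zero is feasible --- it satisfies $F(y)=4-0-2\cdot2=0$, the weight-one cap vacuously, and compatibility of support --- and it has $\ell_1$-norm $2=2^{k-1}$. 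More generally, any decomposition of $Q_k$ into subcubes of dimension $1$ or $2$, with weight-$1$ coordinates set to $1$ and weight-$2$ coordinates set to $2$, yields a maximiser (this is the set $O$ in the paper, which strictly contains the matching-supported vectors $O^*$). So the shifting argument cannot strictly increase the norm when the source coordinate has weight $2$, and the classification you propose does not follow from the optimisation alone.

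To pass from $O$ to $O^*$ --- i.e.\ to rule out the weight-$2$ maximisers --- one must return to the combinatorics: if $x(G)/n$ is near a point with a weight-$2$ coordinate near $2$, then $V_\tau$ is a nearly complete $2$-coloured graph on about $2n$ vertices, and a $2$-colour Ramsey-type stability result (Lemma~\ref{2stab}, a consequence of~\cite{BenLuc}, applied at threshold $\left(\tfrac{3}{2}+\eps\right)n$) produces a monochromatic connected matching of order $\ge (1+\delta)n$ inside $V_\tau$, which is automatically an odd connected matching in $G$ and contradicts the hypothesis. This extra combinatorial input is essential and is what the paper uses in the final step of the proof of Theorem~\ref{red}; the Erd\H{o}s--Gallai edge count alone is too weak to exclude the weight-$2$ configurations. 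Your proposal omits this entirely, and as a result the compactness argument would only deliver closeness to some $x^*\in O$, not to the profile of a hypercube colouring.

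A secondary point: your ``per-component size bound'' phrasing suggests applying Erd\H{o}s--Gallai to each connected component of $G_i''$ and then reconstructing how components align with the parts $V_\tau$. The paper avoids this subtlety by applying Erd\H{o}s--Gallai once to the whole graph $G_i''$ (which has no long cycles because it has no large odd connected matching), getting a single linear bound $e(G_i'')\le (1+\delta)\frac{n}{2}|V_*^i|$. This sidesteps the ``key preliminary reduction'' you flag as an obstacle and makes the quadratic form $F$ tractable via compressions and KKT, rather than via ad hoc perturbations.
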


The proof of Theorem~\ref{reduced} occupies the majority of this paper. In the final section we show how Theorem~\ref{conj} follows from \ref{reduced} via combinatorial stability arguments and the regularity method. The outline of the proof of Theorem~\ref{reduced} is as follows. Let $G$ be as in the statement of Theorem~\ref{reduced} and let $x(G)$ denote a profile of $G$. Our starting point is to translate the combinatorial constraint of containing no large monochromatic odd connected matching into an analytic constraint on $x(G)$ of the form
\begin{equation}\label{quad}
F(x(G))\leq0,
\end{equation}
where $F$ is a quadratic form which we derive in the next section. We then view \eqref{quad} as a~constraint in an optimisation problem where we wish to maximise the objective function~$\norm{x(G)}$. Recalling that $\norm{x(G)}=v(G)$, we get a corresponding upper bound on the order of $G$. It turns out that optimal solutions to this optimisation problem correspond to the profiles of~hypercube colourings. Solving the optimisation problem is the subject of Sections~\ref{sec:compress} and~\ref{sec:convex}. In Section~\ref{sec:stab} we use compactness arguments to show that almost optimal solutions must be close in $\ell_1$-norm to the profile of a hypercube colouring. We then translate this analytic stability into the, more combinatorial, stability statement of Theorem~\ref{reduced}. Note that Theorem~\ref{reduced} will be applied to a reduced graph like the one described above. The focus of the final section is to show that if the profile of this reduced graph is close in $\ell_1$-norm to the profile of a hypercube colouring, then the original graph is close in edit distance to a~hypercube colouring.

\section{Deriving the Analytic Constraints}
Given a $k$-coloured graph $G$, we will show how to translate the combinatorial constraint of containing no large monochromatic odd connected matching into an analytic constraint on the profile of $G$.

From here on, throughout the paper, we let $k\geq2$ be a fixed integer. Let $G$ be a $k$-coloured graph. First we distinguish between two types of edges of $G$. If $e\in E(G)$ is coloured with the colour $j$ and lies in a bipartite component of $G_j$ then we call $e$ a \emph{bipartite} edge. We call $e$ \emph{non-bipartite} otherwise. Let $(V_\tau: \tau\in\{0,1,\ast\}^k)$ be a profile partition of $G$. We make two simple observations regarding the profile partition of a $k$-coloured graph.
\begin{observation}\label{obs1} If $e\in E(G)$ is a bipartite edge of colour $j$ then it must have endpoints in parts $V_\tau, V_\sigma$ for some $\tau, \sigma\in \{0,1,\ast\}^k$ such that $\tau_j=0$ and $\sigma_j=1$.
\end{observation}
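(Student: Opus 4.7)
The plan is to unpack the definitions directly; no clever argument is needed, and the only minor thing to check is that assignments are consistent.

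Let $e=\{u,v\}$ be a bipartite edge of colour $j$, and let $\tau,\sigma\in\{0,1,\ast\}^k$ be the unique elements of the profile partition with $u\in V_\tau$ and $v\in V_\sigma$ (uniqueness is guaranteed by the disjoint union in~\eqref{union}). By definition of a bipartite edge, $e$ is an edge of $G_j'$, the union of the bipartite components of $G_j$. In particular, $u$ and $v$ both lie in $V(G_j')=V_0^j\cup V_1^j$, so neither of them belongs to $V_\ast^j=V(G_j'')$. This already tells us that $\tau_j,\sigma_j\in\{0,1\}$, since $V_\tau\subseteq V_{\tau_j}^j$ and $V_\sigma\subseteq V_{\sigma_j}^j$.

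It remains to rule out the possibility that $\tau_j=\sigma_j$. But $(V_0^j,V_1^j)$ is by construction a bipartition of $G_j'$, so any edge of $G_j'$ — in particular $e$ — must have one endpoint in $V_0^j$ and the other in $V_1^j$. Since $u\in V_{\tau_j}^j$ and $v\in V_{\sigma_j}^j$, this forces $\{\tau_j,\sigma_j\}=\{0,1\}$. After relabelling $\tau$ and $\sigma$ if necessary we may assume $\tau_j=0$ and $\sigma_j=1$, as claimed. The only ``obstacle'', if one can call it that, is to notice that one is free to swap the roles of $\tau$ and $\sigma$ — the observation only asserts the existence of such a labelling, not that it is canonical — which is a consequence of the arbitrary choice made in selecting the bipartition $V(G_j')=V_0^j\cup V_1^j$ flagged in the paragraph preceding the observation.
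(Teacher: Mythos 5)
Your proof is correct and matches the paper's intent exactly: the paper states this as an \emph{Observation} with no written proof precisely because it follows by unpacking the definitions of the profile partition and of a bipartite edge, which is what you do. The one step worth being a little more explicit about — that $u,v\notin V_\ast^j$ because the bipartite and non-bipartite components of $G_j$ are vertex-disjoint — you handle correctly, and the final relabelling remark is a reasonable clarification.
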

\begin{observation}\label{obs2} If $e\in E(G)$ is a non-bipartite edge of colour $j$ then it must have endpoints in parts $V_\tau, V_\sigma$ for some (not necessarily distinct) $\tau, \sigma\in \{0,1,\ast\}^k$ such that $\tau_j=\sigma_j=\ast$.
\end{observation}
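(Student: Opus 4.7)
The plan is simply to unpack the definitions. By definition, a non-bipartite edge of colour $j$ is an edge of $G_j''$, the union of the non-bipartite components of the $j$-th colour class. Since every vertex appearing in $G_j''$ lies in $V(G_j'')=V_\ast^j$, both endpoints $u,v$ of $e$ satisfy $u,v\in V_\ast^j$.

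Now recall that the profile partition is defined by $V_\tau=\bigcap_{i=1}^k V_{\tau_i}^i$, so a vertex $w$ lies in a part $V_\tau$ with $\tau_j=\ast$ if and only if $w\in V_\ast^j$. Applying this to each of the two endpoints of $e$ and letting $V_\tau,V_\sigma$ be the parts containing $u$ and $v$ (possibly with $\tau=\sigma$, in case $u$ and $v$ happen to agree in all the other coordinates) gives $\tau_j=\sigma_j=\ast$, which is exactly the assertion. There is no real obstacle to overcome: the statement is essentially a translation of the definition of $V_\ast^j$ into the language of profile coordinates, included as the natural counterpart to Observation \ref{obs1} so that in the next section both types of edges can be tracked cleanly when the combinatorial constraint on monochromatic odd connected matchings is converted into a quadratic inequality on the profile vector $x(G)$.
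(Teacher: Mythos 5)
Your argument is correct and is exactly the unpacking of definitions that the paper has in mind (the paper states Observation~\ref{obs2} without proof, treating it as immediate): a non-bipartite edge of colour $j$ has both endpoints in $V(G_j'')=V_\ast^j$, and a vertex lies in a part $V_\tau$ with $\tau_j=\ast$ precisely when it lies in $V_\ast^j$. Nothing more is needed.
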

This motivates the following definitions.
\begin{definition}
We say that $\sigma, \tau \in \{0,1,\ast\}^k$ are \emph{distinguishable} if $\{\sigma_j,\tau_j\}=\{0,1\}$ for some $j\in[k]$. We say that $\sigma$ and $\tau$ are \emph{indistinguishable} otherwise.
\end{definition}
\begin{definition}
If $\sigma, \tau\in\{0,1,\ast\}^k$ are such that either $(i)$ $\sigma,\tau$ are distinguishable or $(ii)$ $\sigma_j=\tau_j=\ast$ for some $j\in[k]$, then we say that $\sigma$ and $\tau$ are \emph{compatible}. We say that $\sigma, \tau$ are \emph{incompatible} otherwise.
\end{definition}
Viewing elements of $\{0,1,\ast\}^k$ as subcubes of $Q_k$, we may reinterpret these definitions as follows.

\begin{lemma}\label{distinguishable}
 Let $\sigma, \tau\in\{0,1,\ast\}^k$. Then $\sigma, \tau$ are distinguishable if and only if $Q(\tau)\cap Q(\sigma)=\emptyset$. Furthermore,  $\sigma, \tau$ are incompatible if and only if $\abs{Q(\tau)\cap Q(\sigma)}=1$.
 \end{lemma}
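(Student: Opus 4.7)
The plan is to prove both equivalences by analysing the intersection $Q(\sigma)\cap Q(\tau)$ coordinate by coordinate. The guiding observation is that $c\in Q(\sigma)\cap Q(\tau)$ iff $c\in\{0,1\}^k$ agrees with $\sigma_j$ on every coordinate $j$ where $\sigma_j\in\{0,1\}$ and with $\tau_j$ on every coordinate $j$ where $\tau_j\in\{0,1\}$. The existence and number of such $c$ is therefore determined entirely by whether these coordinate-wise constraints are consistent.

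For the first equivalence, I would first prove the contrapositive of the forward direction: if $\sigma,\tau$ are distinguishable via some coordinate $j$ with $\{\sigma_j,\tau_j\}=\{0,1\}$, then any putative $c\in Q(\sigma)\cap Q(\tau)$ would need to satisfy $c_j=\sigma_j$ and $c_j=\tau_j$ simultaneously, which is impossible. For the converse, if $\sigma,\tau$ are indistinguishable, then on every coordinate $j$ the pair $(\sigma_j,\tau_j)$ avoids $\{0,1\}$, hence either both are equal elements of $\{0,1\}$, both are $\ast$, or exactly one is $\ast$ and the other lies in $\{0,1\}$. In each case I can pick $c_j$ consistently (say $c_j=\sigma_j$ if $\sigma_j\in\{0,1\}$, else $c_j=\tau_j$ if $\tau_j\in\{0,1\}$, else $c_j=0$), producing an explicit element of $Q(\sigma)\cap Q(\tau)$.

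For the second equivalence, I would upgrade the analysis to count: assuming $\sigma,\tau$ are indistinguishable, the coordinates split into those where at least one of $\sigma_j,\tau_j$ lies in $\{0,1\}$ (where $c_j$ is forced) and those where $\sigma_j=\tau_j=\ast$ (where $c_j$ is unconstrained and free). This yields the clean identity
\[
\abs{Q(\sigma)\cap Q(\tau)}=2^{|\{j\in[k]\,:\,\sigma_j=\tau_j=\ast\}|}
\]
whenever $\sigma,\tau$ are indistinguishable. From this both statements of the lemma fall out: incompatibility means indistinguishability together with the absence of any coordinate $j$ with $\sigma_j=\tau_j=\ast$, which by the formula is precisely the condition $\abs{Q(\sigma)\cap Q(\tau)}=2^0=1$. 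Conversely, $\abs{Q(\sigma)\cap Q(\tau)}=1$ forces the intersection to be nonempty (so by part one $\sigma,\tau$ are indistinguishable) and forbids any $\ast$-agreement coordinate (since one such coordinate would allow flipping $c_j$ to produce a second element of the intersection), giving incompatibility.

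I expect no real obstacle here; the statement is essentially a bookkeeping translation between the symbolic description of subcubes in $\{0,1,\ast\}^k$ and the set-theoretic description in $\{0,1\}^k$. The only point that requires a moment of care is making the case analysis in the backward direction of the first equivalence exhaustive, which is why I would present the coordinate-wise classification explicitly rather than just citing it.
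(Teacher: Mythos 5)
Your proposal is correct and follows essentially the same route as the paper's proof: write the intersection $Q(\sigma)\cap Q(\tau)$ as the set of $c\in\{0,1\}^k$ meeting the coordinate-wise constraints, observe that emptiness is equivalent to a clashing coordinate (distinguishability), and then count the intersection as $2^{\abs{T}}$ where $T=\{j:\sigma_j=\tau_j=\ast\}$ to deduce the incompatibility characterisation. The only difference is presentational, your write-up spells out the case analysis a little more explicitly.
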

\begin{proof}
By the definition of the sets $Q(\tau), Q(\sigma)$ we have
$$
Q(\tau)\cap Q(\sigma)=\{c\in\{0,1\}^k: c_j=\tau_j\ \text{if}\ \tau_j\in\{0,1\}\ \text{and}\  c_j=\sigma_j\ \text{if}\  \sigma_j \in \{0,1\}\}.
$$
This is empty if and only if there exists a $j\in[k]$ such that $\sigma_j, \tau_j\in\{0,1\}$ and $\sigma_j\neq\tau_j$ i.e. if and only if  $\sigma, \tau$ are distinguishable. Let $T=\{i\in [k]: \sigma_i=\tau_i=\ast\}$. If $\sigma, \tau$ are indistinguishable then we see that $\abs{Q(\tau)\cap Q(\sigma)}=2^{\abs{T}}$. Therefore $\abs{Q(\tau)\cap Q(\sigma)}=1$ if and only if $\sigma, \tau$ are indistinguishable and $T=\emptyset$ i.e. $\sigma, \tau$ are incompatible. 
\end{proof}

From now on, we let
\[
D=\left\{\{\sigma,\tau\}\in\binom{\{0,1,\ast\}^k}{2}:\ \sigma,\tau\ \mbox{are distinguishable}\right\}.
\]
It will also be convenient to make the following definition.

\begin{definition}
Let $\alpha>0$ and let $G$ be a graph such that $e(G)\geq\alpha\binom{v(G)}{2}$. Then we say that $G$ is \emph{$\alpha$-dense}.
\end{definition}
This next proposition provides the link between our combinatorial problem and a problem in nonlinear optimisation.
\begin{proposition}\label{constraints}
Let $C>1$, $0<\del<1$ and let $n>1/\del$. Suppose that $G$ is a $(1-\del)$-dense, $k$-coloured graph with $v(G)=Cn$, containing no monochromatic odd connected matching of order $\geq(1+\del)n$. Let $x$ be a profile of $G$ and let $v=x/n$. Then the following hold:
\begin{enumerate}
\item $$\left(\sum_{\tau\in\{0,1,\ast\}^k}v_\tau\right)^2-2\sum_{\{\sigma, \tau\}\in D}v_\sigma v_\tau-\sum_{\tau\in\{0,1,\ast\}^k}\omega(\tau)v_\tau\leq\del k C^2.$$\\
\item $v_\tau\leq1+2\sqrt{\del}C$ whenever $\omega(\tau)=1$.\\
\item $v_\tau v_\sigma\leq2\del C^2$ whenever $\sigma$ and $\tau$ are incompatible.
\end{enumerate}
\end{proposition}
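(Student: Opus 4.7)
The plan is to read off all three constraints from two global resources. The density hypothesis caps the total non-edge count at $\delta\binom{v(G)}{2}\le\delta v(G)^2/2$, and the matching hypothesis, via Theorem~\ref{EG} applied to each non-bipartite component of each $G_j$, caps $e(G_j'')\le(1+\delta)n(\abs{V_\ast^j}-1)/2$. Observations~\ref{obs1},~\ref{obs2} and Lemma~\ref{distinguishable} will translate the combinatorial structure of the profile partition into precise statements about which pairs of parts admit which sort of edges.

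For (3), if $\sigma,\tau$ are incompatible then by Lemma~\ref{distinguishable} they are indistinguishable and share no $\ast$-coordinate, so Observations~\ref{obs1} and~\ref{obs2} rule out any bipartite or non-bipartite edge between $V_\sigma$ and $V_\tau$ (or within $V_\tau$ if $\sigma=\tau$). Every such pair is therefore a non-edge, so $\abs{V_\sigma}\abs{V_\tau}\le\delta v(G)^2/2$, and dividing by $n^2$ yields~(3) (the $\sigma=\tau$ case is handled analogously using $\binom{\abs{V_\tau}}{2}$ together with $n>1/\delta$). For (2), if $\omega(\tau)=1$ with starred coordinate $j$, then any edge inside $V_\tau$ is forced to be of colour $j$ (any other colour $i$ would demand $\tau_i=\ast$ or $\{\tau_i,\tau_i\}=\{0,1\}$), so $G_j[V_\tau]\subseteq G_j''$ inherits circumference at most $(1+\delta)n$ and Theorem~\ref{EG} gives $e(G_j[V_\tau])\le(1+\delta)n(\abs{V_\tau}-1)/2$. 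Combining with the non-edge bound $\binom{\abs{V_\tau}}{2}-e(G_j[V_\tau])\le\delta v(G)^2/2$ produces a quadratic in $\abs{V_\tau}$ whose solution is $\abs{V_\tau}\le(1+2\sqrt{\delta}C)n$.

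The main work is in~(1). After multiplying by $n^2$ and using $\sum_\tau\omega(\tau)\abs{V_\tau}=\sum_j\abs{V_\ast^j}$, a direct expansion writes the left-hand side as $2P_{\mathrm{ind}}+v(G)-n\sum_j\abs{V_\ast^j}$, where $P_{\mathrm{ind}}$ counts unordered vertex-pairs lying in (possibly equal) indistinguishable parts. By Observations~\ref{obs1} and~\ref{obs2}, every edge contributing to $P_{\mathrm{ind}}$ must be non-bipartite and hence lies in some $G_j''$, so applying Theorem~\ref{EG} colour-by-colour together with the non-edge budget gives
\begin{equation*}
2P_{\mathrm{ind}}\le(1+\delta)n\sum_j\abs{V_\ast^j}+\delta v(G)^2.
\end{equation*}
Substituting reduces the target inequality to showing $\delta n\sum_j\abs{V_\ast^j}+\delta v(G)^2+v(G)\le\delta kv(G)^2$, which follows from $\sum_j\abs{V_\ast^j}\le kv(G)$, from $C>1$, and from $v(G)=Cn>1/\delta$, after absorbing the lower-order $\delta v(G)^2$ and $v(G)$ terms into the dominant $\delta kv(G)^2$.

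The principal obstacle is the double accounting in~(1): each vertex belongs to exactly one $V_\tau$ but to $\omega(\tau)$ of the sets $V_\ast^j$, and each edge is counted by $P_{\mathrm{ind}}$ only when its host pair of parts is indistinguishable. The structural observation that unlocks the clean form of~(1) is that edges between indistinguishable parts are necessarily non-bipartite, so their total is controlled purely by the matching hypothesis through Erd\H{o}s-Gallai, while the bipartite content of $G$ is exactly accounted for by the distinguishable-pair sum $2\sum_D v_\sigma v_\tau$.
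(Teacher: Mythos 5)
Your proof is correct and follows essentially the same route as the paper's: parts (2) and (3) are handled identically, and for part (1) you use the same two inputs---Erd\H{o}s--Gallai applied to each non-bipartite part $G_j''$ together with the $(1-\delta)$-density bound on missing edges---the only difference being that you expand $n^2F(v)$ into the indistinguishable-pair count $P_{\mathrm{ind}}$ while the paper sandwiches the non-bipartite edge count $e_0$ between a lower bound (total edges minus potential bipartite pairs) and the Erd\H{o}s--Gallai upper bound, which is equivalent bookkeeping since a pair lies in indistinguishable parts exactly when any edge on it is forced to be non-bipartite. One small caveat, shared with the paper itself: your final absorption (needing $\delta n\sum_j|V_\ast^j|+\delta v(G)^2+v(G)\le\delta kv(G)^2$) reduces via $\sum_j|V_\ast^j|\le kv(G)$ and $v(G)>1/\delta$ to roughly $\tfrac{k+1}{C}+1\le k$, which fails for $k=2$, $C=2$, just as the paper's scaled inequality $F(v)\le\delta kC+2\delta C^2$ exceeds $\delta kC^2$ there---but this is immaterial because the proposition is only ever invoked with the far more generous threshold $\sqrt{\delta}\,k2^{2k}$.
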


\begin{proof}
Let us first remind ourselves of the graph decomposition discussed in Subsection \ref{sec:decomp}. For each $i\in[k]$, we write $G_i=G_i'\cup G_i''$, where $G_i'$ is the union of the bipartite components of $G_i$ and $G_i''$ is the union of the non-bipartite components of $G_i$. For each $i\in[k]$, write $V(G_i')=V_0^i\cup V_1^i$ where $V_0^i$ and $V_1^i$ are the vertex classes of a bipartition of $G_i'$ and set $V_\ast^i=V(G_i'')$. For $\tau\in \{0,1,\ast\}^k$, set $V_\tau=\bigcap_{j=1}^k V_{\tau_j}^j$. Let $x=(\abs{V_\tau}: \tau\in\{0,1,\ast\}^k)$ be the profile corresponding to this partition. Let $N=v(G)$ and note that
\begin{equation}\label{deco}
N=\sum_{\tau\in\{0,1,\ast\}^k} x_\tau.
\end{equation}
It follows from Observation \ref{obs1} that the number of bipartite edges in $G$ is at most
$
\sum_{\{\sigma, \tau\}\in D}x_\sigma x_\tau.
$
Letting $e_0$ denote the number of non-bipartite edges in $G$ we therefore have that
\begin{equation}\label{non1}
e_0\geq e(G)-\sum_{\{\sigma, \tau\}\in D} x_\sigma x_\tau.
\end{equation}
Since $N\geq1/\delta$, we have
\begin{equation}\label{e0}
e(G)\geq(1-\delta)\binom{N}{2}\geq(1-2\delta)\frac{N^2}{2}.
\end{equation}
Combining \eqref{deco}, \eqref{non1} and \eqref{e0} gives
\begin{equation}\label{non2}
e_0\geq \frac{1}{2}\left(\sum_{\tau\in\{0,1,\ast\}^k}x_\tau\right)^2-\sum_{\{\sigma, \tau\}\in D}x_\sigma x_\tau-\delta N^2.
\end{equation}
We now find a corresponding upper bound for $e_0$. Recall that for $\tau\in\{0,1,\ast\}^k$, the weight $\omega(\tau)$ of $\tau$ is defined to be the size of the set $\{i\in[k]:\tau_i=\ast\}$.

By assumption, for each $i\in[k]$, every connected component of $G_i''$ has no matching on $(1+\delta)n$ vertices and so in particular $G_i''$ has no cycle of length greater than $(1+\delta)n$. Theorem~\ref{EG} therefore implies that
\begin{equation}\label{sparse}
e(G_i'')\leq (1+\delta)\frac{n}{2}\abs{V_\ast^i}.
\end{equation}
Observe that 
\begin{equation}\label{local}
\abs{V_\ast^i}=\sum_{\{\tau\in\{0,1,\ast\}^k : \tau_i=\ast\}}x_{\tau}.
\end{equation}
Since each non-bipartite edge of $G$ belongs to $E(G_i'')$ for some $i$, \eqref{sparse} and \eqref{local} provide the upper bound
\begin{equation}\label{above}
e_0\leq\sum_{i=1}^ke(G_i'')\leq(1+\delta)\frac{n}{2}\sum_{i=1}^k\abs{V_\ast^i}=(1+\delta)\frac{n}{2}\sum_{\tau\in\{0,1,\ast\}^k}\omega(\tau)x_{\tau}.
\end{equation}
Since $\omega(\tau)\leq k$ for all $\tau\in \{0,1,\ast\}^k$ by definition, \eqref{deco} and \eqref{above} imply the bound
\begin{equation}\label{above2}
e_0\leq\frac{1}{2}\delta nkN+\frac{n}{2}\sum_{\tau\in\{0,1,\ast\}^k}\omega(\tau)x_{\tau}.
\end{equation}

Recall that $v=x/n$. Comparing the bounds \eqref{non2} and \eqref{above2} and scaling the resulting inequality by $2/n^2$ yields

$$
\left(\sum_{\tau\in\{0,1,\ast\}^k}v_\tau\right)^2-2\sum_{\{\sigma, \tau\}\in D}v_\sigma v_\tau-2\del C^2\leq\sum_{\tau\in\{0,1,\ast\}^k}\omega(\tau)v_\tau+\del kC.
$$
This establishes (1). Notice that if $\tau\in\{0,1,\ast\}^k$ is such that $\omega(\tau)=1$, then $G[V_\tau]$ is monochromatic with all edges non-bipartite by Observations \ref{obs1} and \ref{obs2}. $G[V_\tau]$ therefore contains no cycle of length greater than $(1+\delta)n$ and so by Theorem~\ref{EG} and the fact that $G$ has at most $\delta\binom{N}{2}$ edges missing
$$
\binom{x_\tau}{2}-\delta\binom{N}{2}\leq e(G[V_\tau])\leq (1+\delta)\frac{n}{2}x_\tau.
$$
It follows that
$$
v_\tau^2\leq (1+2\delta)v_\tau+\delta C^2,
$$
from which (2) follows.
Finally, let us note that by Observations \ref{obs1} and \ref{obs2}, if $\sigma, \tau$ are incompatible, then there can be no edges lying between $V_\sigma$ and $V_\tau$. Since $G$ has at most $\delta\binom{N}{2}$ edges missing we must then have
$$
x_\tau x_\sigma\leq2\delta N^2
$$
(note that this inequality also accounts for the case where $\sigma=\tau$) and so (3) follows.

\end{proof}
Given a graph $G$ its profile lies in the space $\mathbb{R}^{\{0,1,\ast\}^k}$ which we will denote by $\mathbb{R}^\ast$. In view of Proposition~\ref{constraints} we define the function $F: \mathbb{R}^\ast\to\mathbb{R}$ by
$$
F(x)=\left(\sum_{\tau\in\{0,1,\ast\}^k}x_\tau\right)^2-2\sum_{\{\sigma, \tau\}\in D}x_\sigma x_\tau-\sum_{\tau\in\{0,1,\ast\}^k}\omega(\tau)x_\tau.
$$
Let us also define the following subsets of $\mathbb{R}^\ast$.
\\

\noindent
\bm{$X(\gamma)$}: For $\gamma\geq0$, let $X(\gamma)$ denote the set of elements $x\in\mathbb{R}^\ast$ satisfying:\\
\begin{enumerate}[(X1)]

\item$
F(x)\leq\gamma
$\\
\item
$
x_\tau\leq1+\gamma\hspace{0.2 cm}\mbox{whenever}\hspace{0.2 cm}w(\tau)=1.
$\\
\item $x_\tau x_\sigma\leq\gamma$ whenever $\sigma$ and $\tau$ are incompatible.\\ 
\item $x_\tau\geq0$ for all $\tau$.\\
\end{enumerate}

Now let $G$ be as in the statement of Theorem~\ref{reduced} and let $x$ be a profile of $G$. By the above proposition we have $x/n\in X(\sqrt{\delta}k2^{2k})$ whereas we also have $\norm{x}=2^{k-1}n$. We will show that for $\delta$ small, this means that $x/n$ is an element of almost maximal norm in $X(\sqrt{\delta}k2^{2k})$. We will also show that elements of large norm in $X(\sqrt{\delta}k2^{2k})$ have a very specific structure (in fact they resemble the profile of a hypercube colouring) and so this imposes a lot of structure on $x$. For now we focus our attention on the set $X(0)$ which we denote simply by $X$. Later on, we use compactness arguments to relate properties of $X$ and $X(\gamma)$ for $\gamma$ small. 

Our next goal is to classify elements of maximal $\ell_1$-norm in $X$. To describe these elements we need a definition.

\begin{definition}\label{def:dist}
 Call a set $\mathcal{A}\subseteq \{0,1,\ast\}^k$ \emph{distinguishable} if every pair of distinct elements of $\mathcal{A}$ are distinguishable and also $\omega(\tau)\geq1$ for all $\tau\in\mathcal{A}$. 
\end{definition}
The requirement that elements have weight at least $1$ is for notational convenience later in the paper. Viewing elements of $\{0,1,\ast\}^k$ as subcubes of $Q_k$, a distinguishable set is simply a collection of disjoint subcubes of $Q_k$ (of dimension at least $1$). If this collection covers the whole cube we give it a special name.

\begin{definition}
 Call a distinguishable set $\mathcal{A}\subseteq \{0,1,\ast\}^k$ a \emph{decomposition} if\\ $\bigcup_{\tau\in\mathcal{A}}Q(\tau)=\{0,1\}^k$.
\end{definition}

Let us quickly record a simple result concerning distinguishable sets which will become useful later.

\begin{lemma}\label{dist}
Let $\mathcal{D}\subset\{0,1,\ast\}^k$ be a distinguishable set. Then
$$
\sum_{\tau\in\mathcal{D}}2^{\omega(\tau)}\leq2^k,
$$
with equality if and only if $\mathcal{D}$ is a decomposition.
\end{lemma}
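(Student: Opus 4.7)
The plan is to view elements of $\mathcal{D}$ as subcubes of $Q_k$ and simply count vertices. By definition $\abs{Q(\tau)} = 2^{\omega(\tau)}$, so the claim is nothing more than the statement that a disjoint collection of subsets of $\{0,1\}^k$ has total size at most $2^k$, with equality precisely when they cover $\{0,1\}^k$.

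First I would invoke Lemma~\ref{distinguishable}: since $\mathcal{D}$ is distinguishable, every pair of distinct $\sigma, \tau \in \mathcal{D}$ satisfies $Q(\sigma) \cap Q(\tau) = \emptyset$. Thus the family $\{Q(\tau) : \tau \in \mathcal{D}\}$ is a pairwise disjoint family of subsets of $\{0,1\}^k$. Taking cardinalities gives
$$
\sum_{\tau \in \mathcal{D}} 2^{\omega(\tau)} \;=\; \sum_{\tau \in \mathcal{D}} \abs{Q(\tau)} \;=\; \Bigl|\bigcup_{\tau \in \mathcal{D}} Q(\tau)\Bigr| \;\leq\; \abs{\{0,1\}^k} \;=\; 2^k,
$$
which is the desired inequality.

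For the equality clause, the chain above shows that equality holds if and only if $\bigcup_{\tau \in \mathcal{D}} Q(\tau) = \{0,1\}^k$, which is exactly the definition of $\mathcal{D}$ being a decomposition. There is no genuine obstacle here; the only thing to keep honest is that the requirement $\omega(\tau) \geq 1$ built into the definition of a distinguishable set plays no role in the argument (it only prevents $\mathcal{D}$ from containing weight-$0$ singletons, which would trivially satisfy the inequality anyway).
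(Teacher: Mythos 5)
Your proof is correct and is essentially identical to the paper's: both view the elements of $\mathcal{D}$ as pairwise disjoint subcubes $Q(\tau)\subseteq\{0,1\}^k$ (via Lemma~\ref{distinguishable}), sum cardinalities, and observe that equality is exactly the covering condition defining a decomposition. Your added remark about the weight-$\geq 1$ requirement being inessential here is accurate.
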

\begin{proof}
This is simply the observation that a distinguishable set $\mathcal{D}$ is a collection of disjoint subcubes of $Q_k$ and so the sum of their sizes $\sum_{\tau\in\mathcal{D}}2^{\omega(\tau)}$ is bounded by the size of $Q_k$. Moreover we have equality if and only if these subcubes cover all of $Q_k$ i.e. $\mathcal{D}$ is a decomposition.
\end{proof}

We define the following subset of $\mathbb{R}^{\ast}$.
\\

\noindent
$\bm{O}$: Let O denote the set of elements $x\in\mathbb{R}^{\ast}$ satisfying:\\
\begin{enumerate}[(O1)]
\item $supp(x)$ is a decomposition where $\omega(\tau)=1$ or $2$ for all $\tau\in supp(x)$.\\
\item For all $\tau\in supp(x)$, if $\omega(\tau)=1$ then $x_\tau=1$ and if $\omega(\tau)=2$ then $x_\tau=2$.\\
\end{enumerate}
It is easy to check that $O\subseteq X$. The next proposition asserts that $O$ is the set of elements of maximal $\ell_1$-norm in $X$.

\begin{proposition}\label{props2}
If $x\in X$, then 
$
\norm{x}\leq2^{k-1}
$
with equality if and only if $x\in O$.
\end{proposition}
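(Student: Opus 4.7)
First, I verify $O\subseteq X$ with $\|x\|=2^{k-1}$ on $O$. For $x\in O$, let $\mathcal{A}=\mathrm{supp}(x)$; by definition $\mathcal{A}$ is a decomposition of $Q_k$ into subcubes of dimension $1$ or $2$, and $x_\tau=\omega(\tau)$ on $\mathcal{A}$. Conditions (X2)--(X4) are immediate. For (X1), I use (X3) to rewrite $F$: since $x_\sigma x_\tau=0$ for every incompatible pair, expanding $(\sum_\tau x_\tau)^2-2\sum_D x_\sigma x_\tau$ and discarding the vanishing incompatible--indistinguishable cross products yields
\[
F(x)=\sum_\tau x_\tau^2+2\sum_{\{\sigma,\tau\}\in I'} x_\sigma x_\tau-\sum_\tau\omega(\tau)x_\tau,
\]
where $I'$ denotes distinct compatible--indistinguishable pairs. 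For $x\in O$ the cross term vanishes by distinguishability of $\mathcal{A}$, and the two remaining sums both equal $\sum\omega(\tau)^2$, so $F(x)=0$. Finally, using $\omega/2^{\omega}=1/2$ for $\omega\in\{1,2\}$ and the equality case of Lemma~\ref{dist} (since $\mathcal{A}$ is a decomposition),
\[
\|x\|=\sum_{\tau\in\mathcal{A}}\omega(\tau)=\tfrac{1}{2}\sum_{\tau\in\mathcal{A}}2^{\omega(\tau)}=\tfrac{1}{2}\cdot 2^k=2^{k-1}.
\]

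For the upper bound $\|x\|\leq 2^{k-1}$ and the equality characterisation, my plan has two phases. \emph{Phase~1 (compression).} Reduce to distinguishable support without decreasing $\|x\|$. Given an indistinguishable--compatible pair $\sigma,\tau\in\mathrm{supp}(x)$ (which necessarily share a $*$-coordinate), merge them into the smallest subcube $\rho$ containing $Q(\sigma)\cup Q(\tau)$ by setting $x'_\rho:=x_\rho+x_\sigma+x_\tau$ and $x'_\sigma=x'_\tau:=0$. This preserves $\|x\|$, (X2), (X3) and (X4); the delicate point is (X1). A direct computation yields
\[
F(x')-F(x)=2\Bigl(\sum_D x_\alpha x_\beta-\sum_D x'_\alpha x'_\beta\Bigr)-\Bigl(\sum_\tau\omega(\tau)x'_\tau-\sum_\tau\omega(\tau)x_\tau\Bigr).
\]
Using the inclusion $D(\rho)\subseteq D(\sigma)\cap D(\tau)$ (any $j$ with $\rho_j\in\{0,1\}$ has $\sigma_j=\tau_j=\rho_j$), the first bracket is non-negative (merging destroys some distinguishable cross products) and, since $\omega(\rho)\geq\omega(\sigma),\omega(\tau)$, the second bracket is also non-negative, so the two contributions compete. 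In some configurations naive merging violates (X1); the correct compression must be applied selectively, possibly combined with a compensating rescaling, or replaced by a subtler operation. I expect this to be the main technical obstacle, handled by the paper's compression section. Iterating suitably yields $x'\in X$ with $\|x'\|\geq\|x\|$ and distinguishable $\mathrm{supp}(x')$.

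\emph{Phase~2 (convex optimisation).} Once $\mathrm{supp}(x)$ is distinguishable the cross term in the expansion of $F$ vanishes and (X1) reduces to $\sum_\tau x_\tau^2\leq\sum_\tau\omega(\tau)x_\tau$. Together with (X2), $x_\tau\geq 0$, and the packing bound $\sum_{\tau\in\mathrm{supp}(x)}2^{\omega(\tau)}\leq 2^k$ from Lemma~\ref{dist}, the problem becomes a finite-dimensional convex optimisation, which I would attack with Lagrange multipliers: (X2) binds on weight-$1$ coordinates, giving $x_\tau=1$; on weight-$\omega\geq 2$ coordinates the KKT conditions give $x_\tau=\omega(\tau)/2+1/(2\lambda)$. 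Substituting into the Lagrangian value and combining with the pointwise inequality $\omega\leq 2^{\omega-1}$ (tight iff $\omega\in\{1,2\}$) together with Lemma~\ref{dist} yields $\|x\|\leq 2^{k-1}$. Equality then forces, in turn: no $I'$-pair in $\mathrm{supp}(x)$ (so no compression was needed in Phase~1), $\omega(\tau)\in\{1,2\}$ on $\mathrm{supp}(x)$, $x_\tau=\omega(\tau)$, and $\mathrm{supp}(x)$ a decomposition. These are precisely the defining conditions of $O$. The hardest step overall is Phase~1: designing a compression whose $F$-increasing and $F$-decreasing contributions can be controlled simultaneously.
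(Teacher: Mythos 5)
Your Phase 0 verification that $O\subseteq X$ with $\|x\|=2^{k-1}$ is correct, and your high-level two-phase plan (compress to tame the support, then run a constrained optimisation) matches the paper's architecture. But the concrete compression you propose is not the one the paper uses, and — as you yourself flag — it does not obviously preserve the constraint (X1). You merge an indistinguishable--compatible pair $\sigma,\tau$ (and any pre-existing mass on $\rho$) into the smallest common superсube $\rho$, a genuinely three-coordinate operation. The paper's compression $x(\pi,\rho)$ instead shifts mass between two fixed indistinguishable coordinates and caps weight-$1$ coordinates at~$1$. The crucial point that makes the paper's operation tractable is missing from your account: because $\pi$ and $\rho$ are indistinguishable, the term $x_\pi x_\rho$ is absent from $\sum_D$, so along the line $x+t(e_\rho-e_\pi)$ (which keeps $\sum x_\tau$ fixed) $F$ is \emph{affine}; hence for at least one direction $F$ does not increase (Lemma~\ref{compress}). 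Your merge enjoys no such linearity, and as you observe it may increase $F$ past zero — so your Phase~1 is a plan, not a proof. Relatedly, your claim to reach \emph{distinguishable} support after compression overshoots what the paper's compression actually delivers: the paper obtains only that $D(x)$ is a disjoint union of stars (Lemma~\ref{star}), with weight-$1$ leaves sitting inside their roots, and distinguishability of the support of extremal points is a \emph{conclusion} of the optimisation (Corollary~\ref{Ocompress}), not an intermediate invariant. This is why the paper's Proposition~\ref{convex} carries the extra parameters $d_\tau$ tracking leaf counts, which your cleaner Phase~2 does not have and would need if Phase~1 only gives you a star structure.

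The equality characterisation is also underjustified. Even granting that compressed optimal points lie in $O$ (the paper's Corollary~\ref{Ocompress}), one still must argue that an arbitrary $x\in X$ with $\|x\|=2^{k-1}$ was already compressed; otherwise you only learn that \emph{some compression} of $x$ lies in $O$, not $x$ itself. Your sentence ``Equality then forces \ldots\ no $I'$-pair in $\mathrm{supp}(x)$ (so no compression was needed in Phase~1)'' asserts exactly this but gives no argument. In the paper this is a separate, non-trivial statement (Proposition~\ref{opt}), proved by a Taylor-expansion / partial-derivative comparison: writing $F(y)=F(z)+\alpha p^T\nabla F(z)+\alpha^2 p^THp$ with $p=e_\pi-e_\rho$ and using $F(y)=F(z)=0$, $p^THp=0$, one deduces $\partial F/\partial x_\pi(z)=\partial F/\partial x_\rho(z)$ and then reaches a contradiction via the decomposition structure of $z\in O$. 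None of this is present or replaceable by the Lagrangian bookkeeping in your Phase~2, so the ``only if'' direction of the proposition has a genuine gap on top of the Phase~1 gap.
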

We note that the `if' statement in the above proposition is immediate. Indeed if $x\in O$ then $supp(x)$ is a decomposition so that $\sum_{\tau\in supp(x)}2^{\omega(\tau)}=2^k$ by Lemma~\ref{dist}. Moreover $2^{\omega(\tau)}=2x_\tau$ for all $\tau\in supp(x)$ by (O2).

\begin{definition}
If $x\in X$ is such that $\norm{x}=\sup_{z\in X}\norm{z}$ then we say that $x$ is an \emph{optimal point} of $X$. 
\end{definition}
We note that since $X$ is compact, optimal points of $X$ exist. The proof of Proposition~\ref{props2} is split over the next two sections.

\section{Compressions and a Spherical Constraint}\label{sec:compress}
 In this section we make the first steps towards a proof of Proposition~\ref{props2}. Broadly speaking we apply the combinatorial technique of `shifting' or `compression' to transform the complicated non-linear constraint in the definition of $X= X(0)$ into a spherical constraint which is much more amenable to analysis. In Section \ref{sec:convex}  we apply optimisation tools to this transformed problem. We begin with a simple lemma concerning elements of $\{0,1,\ast\}^k$.
 
 \begin{lemma}\label{compat}
If $\sigma, \tau\subseteq\{0,1,\ast\}^k$ are indistinguishable and compatible and $\omega(\tau)=1$, then $Q(\tau)\subseteq Q(\sigma)$. In particular if $\omega(\sigma)=1$ also, then $\sigma=\tau$. 
\end{lemma}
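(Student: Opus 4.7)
The plan is to read off the structure of $\sigma$ coordinate by coordinate from the two hypotheses (indistinguishable and compatible), and then verify the inclusion $Q(\tau)\subseteq Q(\sigma)$ directly from the defining set-builder description of these subcubes.

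First I would locate the unique $\ast$-coordinate of $\tau$. Since $\sigma$ and $\tau$ are compatible but indistinguishable, the compatibility condition forces the existence of some $j\in[k]$ with $\sigma_j=\tau_j=\ast$. Because $\omega(\tau)=1$, this index $j$ is the unique coordinate at which $\tau$ is $\ast$; every other coordinate of $\tau$ lies in $\{0,1\}$.

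Next I would analyse the coordinates $i\neq j$. For such $i$, $\tau_i\in\{0,1\}$, and indistinguishability forbids $\{\sigma_i,\tau_i\}=\{0,1\}$, so $\sigma_i\in\{\tau_i,\ast\}$. It then follows immediately from the definitions
\[
Q(\tau)=\{c\in\{0,1\}^k: c_i=\tau_i\ \text{for all}\ i\neq j\},\qquad Q(\sigma)=\{c\in\{0,1\}^k: c_i=\sigma_i\ \text{whenever}\ \sigma_i\in\{0,1\}\},
\]
that any $c\in Q(\tau)$ satisfies $c_i=\sigma_i$ at every index $i$ with $\sigma_i\in\{0,1\}$ (such an $i$ is necessarily different from $j$, and there $c_i=\tau_i=\sigma_i$). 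Hence $Q(\tau)\subseteq Q(\sigma)$.

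For the "in particular" clause, assume $\omega(\sigma)=1$ as well. Then $|Q(\sigma)|=2^{\omega(\sigma)}=2=2^{\omega(\tau)}=|Q(\tau)|$, so the inclusion just established is an equality $Q(\tau)=Q(\sigma)$. Since the correspondence $\tau\leftrightarrow Q(\tau)$ between $\{0,1,\ast\}^k$ and the subcubes of $Q_k$ is a bijection, this forces $\sigma=\tau$. No step here looks delicate; the only thing to be careful about is ensuring the coordinate $j$ produced by compatibility is genuinely the same $\ast$-coordinate of $\tau$, which is automatic once we use $\omega(\tau)=1$.
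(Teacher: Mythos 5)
Your proof is correct, and it takes a somewhat different route from the paper's. The paper dispatches the lemma in two lines by invoking Lemma~\ref{distinguishable}: since $\sigma,\tau$ are indistinguishable and compatible, that lemma gives $\abs{Q(\tau)\cap Q(\sigma)}\geq 2$, and since $\abs{Q(\tau)}=2^{\omega(\tau)}=2$ this forces $Q(\tau)\subseteq Q(\sigma)$. You instead unpack the definitions of distinguishable and compatible coordinate by coordinate, showing $\sigma_i\in\{\tau_i,\ast\}$ for $i\neq j$ and $\sigma_j=\ast$, and then verify the inclusion directly from the set-builder description of the subcubes. Both arguments are sound; the paper's is shorter because it reuses the already-proved equivalence between the combinatorial conditions on $\{0,1,\ast\}^k$ and the geometry of subcubes, while yours is self-contained and exhibits exactly why $\sigma$ must ``dominate'' $\tau$, which some readers may find more transparent. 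The ``in particular'' clause is handled essentially identically in both.
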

\begin{proof}
Since $\sigma, \tau$ are indistinguishable and compatible we have $\abs{Q(\tau)\cap Q(\sigma)}\geq2$ by Lemma~\ref{distinguishable}. However, $\abs{Q(\tau)}=2$ and so it follows that $Q(\tau)\subseteq Q(\sigma)$. If $\omega(\sigma)=1$ also, then clearly $Q(\sigma)= Q(\tau)$ i.e. $\sigma=\tau$.
\end{proof}
 
 \begin{definition}
Let $x\in\mathbb{R}^{\ast}$. If all pairs of (not necessarily distinct) elements of $supp(x)$ are compatible, then we say that $x$ has \emph{compatible support}.
\end{definition}
Let us note that condition (X3) (with $\gamma=0$) in the definition of the set $X$ is simply the condition that elements of $X$ have compatible support. In particular, if $x\in X$ and $\tau\in\{0,1,\ast\}^k$ has weight 0, then $x_\tau=0$ since $\tau$ is not compatible with itself. 

The following lemma establishes an important property of optimal points.  For $\tau\in \{0,1,\ast\}^k$ we let $e_\tau\in \mathbb{R}^{\ast}$ denote the standard unit vector whose entries are all 0 except the entry labelled $\tau$ which is 1.
 
 \begin{lemma}\label{zero}
Let $x\in X$ be an optimal point then $F(x)=0$.
\end{lemma}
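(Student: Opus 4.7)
The plan is to argue by contradiction. Suppose that $x\in X$ is optimal but $F(x)<0$; I will construct $x'\in X$ with $\norm{x'}>\norm{x}$ by perturbing $x$ in a single coordinate $\tau$. Since $\norm{\cdot}$ is strictly increasing in each nonnegative coordinate, it is enough to find $\tau$ and $\eps>0$ with $x':=x+\eps e_\tau\in X$.

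The key observation is that for any $\tau\in supp(x)$, the perturbation $x':=x+\eps e_\tau$ (with $\eps>0$ small) automatically preserves most of the constraints defining $X$. Constraint (X1) is strict by assumption and $F$ is continuous, so a sufficiently small $\eps$ keeps $F(x')\leq 0$. Since $x_\tau>0$, constraint (X3) applied to $x$ already forces $x_\sigma=0$ for every $\sigma$ incompatible with $\tau$, so (X3) and (X4) persist for $x'$. The only constraint that can possibly bind is (X2), which caps $x'_\tau\leq 1$ when $\omega(\tau)=1$. It therefore suffices to exhibit $\tau\in supp(x)$ with either $\omega(\tau)\geq 2$, or $\omega(\tau)=1$ and $x_\tau<1$.

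If some $\tau\in supp(x)$ has $\omega(\tau)\geq 2$ we are immediately done. Otherwise, note that $\tau$ with $\omega(\tau)=0$ is incompatible with itself, so (X3) forces $x_\tau=0$ in that case, and hence every $\tau\in supp(x)$ must have $\omega(\tau)=1$. By Lemma~\ref{compat} any two distinct compatible elements of weight $1$ are distinguishable, so all distinct pairs in $supp(x)$ lie in $D$. Consequently the cross-terms $x_\sigma x_\tau$ with $\{\sigma,\tau\}\notin D$ vanish from the expansion of $F(x)$, and a direct computation yields
\[
F(x)=\sum_{\tau\in supp(x)}x_\tau(x_\tau-1).
\]
Each summand is nonpositive by (X2), so the assumption $F(x)<0$ forces some summand to be strictly negative, producing $\tau\in supp(x)$ with $x_\tau\in(0,1)$ as required.

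The main obstacle I anticipate is the reduction of $F(x)$ in the second case: one must carefully invoke Lemma~\ref{compat} to argue that the indistinguishable-but-compatible cross-terms vanish on $supp(x)$. Once this simplification is in place, the existence of the desired coordinate is immediate, and the resulting contradiction with optimality of $x$ completes the argument.
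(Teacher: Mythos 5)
Your proof is correct and follows essentially the same approach as the paper: split on whether the support contains an element of weight at least $2$, use continuity of $F$ to perturb a single coordinate, and in the remaining case reduce $F$ to $\sum_{\tau\in\operatorname{supp}(x)}x_\tau(x_\tau-1)$ via Lemma~\ref{compat} to locate a coordinate with $x_\tau<1$.
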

\begin{proof}
Suppose for contradiction that $F(x)<0$. Assume first that there exists $\tau\in supp(x)$ with $\omega(\tau)\geq2$. By the continuity of $F$ we may choose $\alpha>0$ small enough so that $F(x+\alpha e_\tau)<0$. Let $x'=x+\alpha e_\tau$. Since $supp(x')=supp(x)$ it is clear that $x'\in X$. However, $\|x'\|=\|x\|+\alpha>\norm{x}$ contradicting the fact that $x$ is optimal.

We may assume then that $supp(x)$ consists only of elements of weight $1$ and therefore is a distinguishable set by Lemma~\ref{compat} and the fact that $x$ has compatible support. It follows from the definition of $F$ that
$$
F(x)=\sum_{\tau\in supp(x)}(x_\tau^2-x_\tau)<0,
$$
and so $x_\tau<1$ for some $\tau\in supp(x)$. As before there exists some $\alpha>0$ sufficiently small so that $F(x+\alpha e_\tau)<0$. Let $x'=x+\alpha e_\tau$. If we pick $\alpha$ small enough so that  $x'_\tau=x_\tau+\alpha\leq1$ also, then again we have $x'\in X$ with $\|x'\|>\|x\|$ contradicting the optimality of $x$. 
\end{proof}

We now describe the transformations alluded to at the beginning of this section. They will be of great use in simplifying our analysis of optimal points of $X$.

 \begin{definition}
Let $x\in \mathbb{R}^{\ast}$ and let $\pi, \rho\in\{0,1,\ast\}^k$ be distinct. We define the \emph{$(\pi, \rho)$-compression} of $x$, denoted $x(\pi, \rho)$, as follows:
\begin{itemize}
\item If $\omega(\rho)\geq2$, or if  $\omega(\rho)\leq1$ and $x_\pi+x_\rho<1$, then let $x(\pi, \rho)$ be the vector $x'$ with coordinates: $x'_\pi=0$, $x'_\rho=x_\pi+x_\rho$ and $x'_\tau=x_\tau$ for all $\tau\in\{0,1,\ast\}^k\backslash\{\pi, \rho\}$. 
\item  If $\omega(\rho)\leq1$ and $x_\pi+x_\rho\geq1$ then let $x(\pi, \rho)$ be the vector $x'$ with coordinates: $x'_\pi=x_\pi+x_\rho-1$, $x'_\rho=1$ and $x'_\tau=x_\tau$ for all $\tau\in\{0,1,\ast\}^k\backslash\{\pi, \rho\}$. 
\end{itemize}
If $x(\pi, \rho)=x$ then we say that $x$ is \emph{$(\pi, \rho)$-compressed}.
\end{definition}
Let $x\in X$ be an optimal point, we will be interested in instances where $x(\pi, \rho)$ is also an optimal point of $X$. We observe that if $x\in \mathbb{R}^{\ast}$ and $\pi, \rho\in\{0,1,\ast\}^k$ are distinct then $\|x(\pi, \rho)\|=\|x\|$. However, if $x\in X$ then it does not follow in general that $x(\pi, \rho)\in X$.

For reasons that will become clear, we only consider $(\pi, \rho)$-compressions in the case where $\pi$ and $\rho$ are indistinguishable. It will therefore be useful to associate to each point $x\in X$, the digraph $D(x)=(V(x), E(x))$ where $V(x)=supp(x)$ and
$$E(x)=\left\{(\pi,\rho): \pi, \rho\ \text{are distinct, indistinguishable}\ \text{and}\ x(\pi, \rho)\in X\right \}.$$
In particular if $x\in X$ is $(\pi, \rho)$-compressed, where $\pi$ and $\rho$ are distinct and indistinguishable, then $(\pi, \rho)\in E(x)$. We draw attention to the fact that edges of $D(x)$ only occur between indistinguishable pairs. Conversely, the following lemma shows that, when $x\in X$ is optimal, at least one edge occurs between any indistinguishable pair in $D(x)$. 

\begin{lemma}\label{compress}
Let $x\in X$ be optimal and suppose that $\pi,\rho\in V(x)$, are indistinguishable and distinct. Then one of the following holds:
\begin{enumerate}[(i)]
\item $x$ is $(\pi, \rho)$-compressed, $x_\rho=1$, $\omega(\rho)=1$, $\omega(\pi)\geq2$ and $(\rho, \pi)\notin E(x)$,
\item $x$ is $(\rho, \pi)$-compressed, $x_\pi=1$, $\omega(\pi)=1$, $\omega(\rho)\geq2$ and $(\pi, \rho)\notin E(x)$,
\item $(\rho, \pi)$ and $(\pi, \rho)$ both lie in $E(x)$.
\end{enumerate}
\end{lemma}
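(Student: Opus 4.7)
The plan is to show that whether $(\pi,\rho)\in E(x)$ is essentially governed by a single antisymmetric scalar, and then to use Lemma~\ref{zero} to force the trichotomy.

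\textbf{Reducing to (X1).} First I would verify that for any distinct indistinguishable $\pi,\rho\in supp(x)$, the compression $x':=x(\pi,\rho)$ automatically satisfies (X2), (X3), and (X4). Indeed $supp(x')\subseteq supp(x)$, so (X3) is inherited; nonnegativity (X4) follows by inspection of the two cases of the definition (the cap case is triggered exactly when $x_\pi+x_\rho\ge 1$); and (X2) holds because whenever $\omega(\rho)=1$ the cap forces $x'_\rho\le 1$, and symmetrically if $\omega(\pi)=1$. Hence $(\pi,\rho)\in E(x)$ iff $F(x')\le 0$.

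\textbf{Computing $F(x')-F(x)$.} Write $x'=x+c(e_\rho-e_\pi)$, where the transferred mass $c\ge 0$ equals $x_\pi$ or $1-x_\rho$ in the respective cases; in particular $c=0$ iff $\omega(\rho)=1$ and $x_\rho=1$. Total mass is preserved and $\{\pi,\rho\}\notin D$ since $\pi,\rho$ are indistinguishable, so a direct expansion gives
\[
F(x')-F(x)=-c\,\Delta(\pi,\rho),\qquad\Delta(\pi,\rho):=2\sum_{\tau\neq\pi,\rho}x_\tau\bigl(\mathbf{1}[\{\rho,\tau\}\in D]-\mathbf{1}[\{\pi,\tau\}\in D]\bigr)+\omega(\rho)-\omega(\pi).
\]
The crucial antisymmetry $\Delta(\rho,\pi)=-\Delta(\pi,\rho)$ is immediate from the formula.

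\textbf{The trichotomy.} By Lemma~\ref{zero}, $F(x)=0$. Assume WLOG $\Delta(\pi,\rho)\ge 0$. If $\Delta(\pi,\rho)=0$ then $\Delta(\rho,\pi)=0$ as well, so both compressions preserve $F\le 0$ and both edges lie in $E(x)$, giving case (iii). Otherwise $\Delta(\pi,\rho)>0$; if the transfer $c$ for the $(\pi,\rho)$-compression were strictly positive then $x'\in X$ with $F(x')<0$ and $\norm{x'}=\norm{x}$, making $x'$ an optimal point with $F(x')<0$ and contradicting Lemma~\ref{zero}. Hence $c=0$, forcing $\omega(\rho)=1$, $x_\rho=1$, and $x(\pi,\rho)=x$, so $x$ is $(\pi,\rho)$-compressed and $(\pi,\rho)\in E(x)$ trivially. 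Lemma~\ref{compat} applied to the compatible indistinguishable pair $\pi\neq\rho$ with $\omega(\rho)=1$ forces $\omega(\pi)\ge 2$; the transfer $c'$ for $(\rho,\pi)$ is then $x_\rho=1>0$, so $F(x(\rho,\pi))=-\Delta(\rho,\pi)>0$ and $(\rho,\pi)\notin E(x)$. This is case (i), and the symmetric situation $\Delta(\rho,\pi)>0$ yields case (ii).

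\textbf{Main obstacle.} The conceptual crux is isolating the scalar $\Delta(\pi,\rho)$ controlling $F(x')-F(x)$ and noticing its antisymmetry. After that, Lemma~\ref{zero} rules out any positive-transfer compression with $\Delta\neq 0$, so the one-sided cases (i), (ii) can only occur when the relevant compression is ``stuck'' at $x_\rho=1$ (respectively $x_\pi=1$); the rest is bookkeeping around the definition of the compression.
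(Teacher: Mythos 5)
Your proof is correct and follows essentially the same route as the paper: the paper writes $F(x)=(\sum_\tau x_\tau)^2-Ax_\rho-Bx_\pi-C$ with $A,B$ independent of $x_\rho,x_\pi$ and cases on the sign of $A-B$, which is exactly your antisymmetric scalar $\Delta(\pi,\rho)$. The only cosmetic difference is that you package the comparison into the single quantity $\Delta$ and explicitly compute $F(x')-F(x)=-c\Delta(\pi,\rho)$, whereas the paper keeps $A$ and $B$ separate, but the use of Lemma~\ref{zero} to rule out strict decrease, the deduction that $c=0$ forces $\omega(\rho)=1$ and $x_\rho=1$, and the appeal to Lemma~\ref{compat} to get $\omega(\pi)\geq 2$ are all the same.
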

\begin{proof}
Recall that 
$$
F(x)=\left(\sum_{\tau\in\{0,1,\ast\}^k}x_\tau\right)^2-2\sum_{\{\sigma, \tau\}\in D}x_\sigma x_\tau-\sum_{\tau\in\{0,1,\ast\}^k}\omega(\tau)x_\tau,
$$
where $D$ is the set of unordered distinguishable pairs from $\{0,1,\ast\}^k$. Since $\pi, \rho$ are indistinguishable, the term $x_\rho x_\pi$ does not appear in the sum $\sum_{\{\sigma, \tau\}\in D}x_\sigma x_\tau$. We may therefore express $F(x)$ in the form
\begin{equation}\label{lin}
F(x)=\left(\sum_{\tau\in\{0,1,\ast\}^k}x_\tau\right)^2-Ax_\rho-Bx_\pi-C,
\end{equation}
where $A, B$ and C do not depend on $x_\rho$ or $x_\pi$ and $A,B\geq0$. Suppose that $A\geq B$ and let $x'=x(\pi, \rho)$. Let us show that $(\pi, \rho)\in E(x)$ i.e. $x'\in X$. By \eqref{lin} we have $F(x')\leq F(x)$ and so $x'$ satisfies $(X1)$ in the definition of $X$. By the definition of $(\pi, \rho)$-compression it is clear that $x'$ also satisfies $(X2)$ and $(X4)$. Since $\rho\in supp(x)$, we also have $supp(x')\subseteq {supp}(x)$. Since $x$ has compatible support the same is true for $x'$ i.e. $x'$ satisfies (X3) and so $x'\in X$. Note that since compressions preserve the $\ell_1$-norm, $x'$ is also an optimal point of $X$.

In the case $A=B$, an identical argument shows that $(\rho, \pi)\in E(x)$ also, and so (iii) holds.

Suppose then that $A>B$. In this case, looking again at \eqref{lin}, we see that if $x$ is not $(\pi, \rho)$-compressed then we in fact have $F(x')<F(x)=0$, contradicting Lemma~\ref{zero}. We conclude that $x$ is $(\pi, \rho)$-compressed. Suppose $\omega(\rho)\geq2$, then by the definition of $(\pi, \rho)$-compression we have $x_\pi=x'_\pi=0$ contradicting the fact that $\pi\in supp(x)$ and so $\omega(\rho)=1$. Since $\pi$ and $\rho$ are compatible, indistinguishable and distinct, it follows from Lemma~\ref{compat} that $\omega(\pi)\geq2$. Let $x''=x(\rho, \pi)$. It follows that $x''_\rho=0$ and $x''_\pi=x_\rho+x_\pi$ and so by \eqref{lin}, $F(x'')>F(x)=0$. We conclude that $x''\notin X$ i.e. $(\rho, \pi)\notin E(x)$. The fact that $x_\rho=1$ follows from the fact that $\omega(\rho)=1$ and $x$ is $(\pi,\rho)$-compressed. Thus (i) holds, and similarly if $A<B$ then (ii) holds.
\end{proof}
We obtain the following immediate corollary.
\begin{corollary}\label{indep}
Let $x\in X$ be an optimal point and suppose that $I$ is an independent set in $D(x)$. Then $I$ is a distinguishable set.
\end{corollary}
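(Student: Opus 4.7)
The plan is to derive Corollary \ref{indep} as an essentially immediate consequence of Lemma \ref{compress} together with the standing observation that $\text{supp}(x) \subseteq \{\tau : \omega(\tau)\geq 1\}$ whenever $x \in X$. Recall the definition of a distinguishable set has two parts: every element must have weight at least $1$, and every pair of distinct elements must be distinguishable. I will verify these two conditions for $I$ separately.

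The weight condition is free. Since $I \subseteq V(x) = \text{supp}(x)$, it suffices to note (as already recorded in the paragraph following the definition of compatible support) that if $\omega(\tau)=0$ then $\tau$ is incompatible with itself, so constraint (X3) forces $x_\tau = 0$; hence no weight-$0$ element lies in $\text{supp}(x)$, and therefore none lies in $I$. Thus every $\tau \in I$ satisfies $\omega(\tau) \geq 1$.

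For the pairwise distinguishability, I will argue by contradiction. Suppose there exist distinct $\pi, \rho \in I$ that are indistinguishable. Since $\pi, \rho \in V(x)$ and $x$ is optimal, Lemma \ref{compress} applies to the pair $(\pi,\rho)$, and one of the three conclusions (i), (ii), (iii) must hold. In case (i) we have $(\pi,\rho) \in E(x)$; in case (ii) we have $(\rho,\pi) \in E(x)$; in case (iii) both ordered pairs lie in $E(x)$. In every case there is at least one edge of $D(x)$ between $\pi$ and $\rho$, which contradicts the assumption that $I$ is an independent set of $D(x)$. Hence every distinct pair in $I$ is distinguishable, and combined with the weight condition this shows $I$ is distinguishable.

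There is no real obstacle here — the corollary is a clean repackaging of Lemma \ref{compress}. The only point worth flagging is the minor bookkeeping needed to confirm the weight-$\geq 1$ requirement in the definition of "distinguishable set", which comes from the constraint (X3) applied at the diagonal $\sigma = \tau$ for weight-$0$ indices.
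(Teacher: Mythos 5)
Your proof is correct and matches what the paper intends when it calls this an "immediate corollary" of Lemma \ref{compress}: case (i) gives $(\pi,\rho)\in E(x)$, case (ii) gives $(\rho,\pi)\in E(x)$, and case (iii) gives both, so an independent set cannot contain a distinct indistinguishable pair, and the weight-$\geq1$ requirement follows from (X3) on the diagonal exactly as you note.
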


\begin{definition}
We call an optimal point $x\in X$ \emph{compressed} if it is $(\pi,\rho)$-compressed for all $(\pi, \rho)\in E(x)$.
\end{definition}
 
We now show that compressed optimal points of $X$ exist. In fact we show that given any optimal point of $x\in X$ we may obtain a compressed optimal point by applying a finite number of compressions to $x$. The simpler structure of compressed optimal points will make it easier to bound their $\ell_1$-norm which is the goal of Proposition~\ref{props2}. 
 
\begin{lemma}\label{exist} 
Compressed optimal points of $X$ exist.
\end{lemma}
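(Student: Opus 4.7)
The plan is to pick an optimal point of $X$ with minimal support size and show that minimality alone forces compressedness. Concretely: since $X$ is compact and $\norm{\cdot}$ is continuous, the set $X^\ast$ of optimal points of $X$ is non-empty and closed; since $\abs{supp(x)}$ takes only finitely many non-negative integer values on $X^\ast$, I can select $x \in X^\ast$ minimising $\abs{supp(x)}$. I will then argue that if such $x$ were not compressed, a further compression would produce another element of $X^\ast$ with strictly smaller support, contradicting the choice.

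So suppose $(\pi,\rho) \in E(x)$ with $x' := x(\pi,\rho) \neq x$; in particular $\pi,\rho \in supp(x)$ are distinct and indistinguishable. The aim is to exhibit some non-trivial compression at $x$ falling into the first clause of the compression definition (the one that resets a coordinate to zero), since this is the only branch that actually reduces the support. I split into two cases according to $\omega(\rho)$. If $\omega(\rho) \geq 2$, the compression $x'$ itself already does the job: by definition $x'_\pi = 0$ and $x'_\rho = x_\pi + x_\rho$, and non-triviality forces $x_\pi > 0$, so $supp(x') = supp(x) \setminus \{\pi\}$. Since $(\pi,\rho) \in E(x)$ and compressions preserve the $\ell_1$-norm, we obtain $x' \in X^\ast$, contradicting minimality.

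The remaining case is $\omega(\rho) = 1$. Here Lemma~\ref{compat} forces $\omega(\pi) \geq 2$ (else one would have $\pi = \rho$), and I would then use Lemma~\ref{compress} to switch the direction of compression. Case (i) of that lemma is ruled out by $x' \neq x$, and case (ii) by $(\pi, \rho) \in E(x)$, so case (iii) must hold and $(\rho, \pi) \in E(x)$ as well. Since $\omega(\pi) \geq 2$, the first clause of the compression definition now applies to $x'' := x(\rho, \pi)$, giving $x''_\rho = 0$ and $x''_\pi = x_\pi + x_\rho$; as $x_\rho > 0$, this compression is non-trivial and $supp(x'') = supp(x) \setminus \{\rho\}$. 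Once more $x'' \in X^\ast$, contradicting the minimal choice of $x$.

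The main conceptual ingredient is Lemma~\ref{compress}: it guarantees that whenever a non-trivial compression is available at an optimal point in one direction, the opposite direction is available too, and this flexibility is exactly what lets me always route the compression into the branch that strips an element from the support. I do not foresee any substantial technical obstacle beyond keeping the two-case bookkeeping clean, together with the small observation (used implicitly above) that any $\tau \in supp(x)$ must have $\omega(\tau) \geq 1$, since an element of weight $0$ is not self-compatible and condition (X3) would then force $x_\tau = 0$.
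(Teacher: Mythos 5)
Your proof is correct and uses the same key ingredients as the paper's: Lemma~\ref{compress} (case (iii)) to pick up the reverse compression when needed, Lemma~\ref{compat} to force $\omega(\pi)\geq2$ when $\omega(\rho)=1$, and the observation that the branch of the compression definition with a weight-$\geq2$ target strictly shrinks the support. The paper phrases this iteratively (start from any optimal point, apply such compressions repeatedly, terminate since $\abs{supp(x)}\leq3^k$) while you phrase it extremally (choose an optimal point of minimal support and derive a contradiction); these are cosmetically different wrappings of the same argument.
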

\begin{proof}
Let $x$ be an arbitrary optimal point of $X$ and define a sequence $x_0, x_1, x_2, \ldots$ of elements of $X$ recursively as follows: Set $x_0=x$. Having chosen $x_0,\ldots,x_t$, if $x_t$ is compressed then stop the sequence at $x_t$. If not, then there exists $(\pi, \rho)\in E(x_t)$ such that $x_t$ is not $(\pi, \rho)$-compressed. By Lemma~\ref{compress}, we must therefore have that $x(\pi, \rho)$ and $x(\rho, \pi)$ are both optimal points of $X$. Note that by the definition of $D(x_t)$, $\rho$ and $\pi$ are indistinguishable and $\{\pi, \rho\}\subseteq supp(x_t)$. Since $x_\tau$ has compatible support, it follows from Lemma~\ref{compat} that either $\omega(\pi)\geq2$ or $\omega(\rho)\geq2$. If $\omega(\rho)\geq2$ then set $x_{t+1}=x(\pi, \rho)$, if not (so that $\omega(\pi)\geq2$) set $x_{t+1}=x(\pi, \rho)$. In either case $x_{t+1}$ is an optimal point of $X$ satisfying $\abs{V(x_{t+1})}=\abs{V(x_t)}-1$. Since $0\leq\abs{V(x)}\leq3^k$ for all $x\in X$ it follows that the sequence must terminate in at most $3^k$ steps.
\end{proof}

Having discovered compressed optimal points, we now explore some of their properties. First we need a definition.\begin{definition}
A \emph{star} is a digraph with vertex set $\{\rho, \pi_1,\ldots, \pi_m\}$ (for some $m\geq0$) and edge set $\{(\rho, \pi_1), \ldots, (\rho, \pi_m)\}$. We refer to $\rho$ as the \emph{root} of the star and we call $\pi_1,\ldots, \pi_m$ \emph{leaves}. Note that we have included the possibility of a star with no leaves. 
\end{definition}

\begin{lemma}\label{star}
Let $x\in X$ be a compressed optimal point, then $D(x)$ is a disjoint union of stars. Moreover if $\rho$ is a root of positive outdegree then $\omega(\rho)\geq2$ and if $\pi$ is a leaf then $\omega(\pi)=1$ and $x_\pi=1$.
\end{lemma}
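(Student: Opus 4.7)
My plan is to leverage Lemma \ref{compress} (which constrains the edges of $D(x)$ at a compressed optimal point) together with Lemma \ref{compat} to pin down the local structure around each vertex of $V(x)$, and then assemble these local facts into the global star structure.

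The key preliminary step is to characterise what a directed edge of $D(x)$ must look like: for any $(a,b) \in E(x)$, I will show $\omega(b) = 1$, $x_b = 1$ and $\omega(a) \geq 2$. To see this, I unpack the definition of $(a,b)$-compression. Because $x$ is compressed, $x(a,b) = x$; the first branch of the definition would force $x_a = 0$, impossible since $a \in supp(x)$, so we land in the second branch, which yields $\omega(b) \leq 1$ and $x_b = 1$. Weight-$0$ elements are excluded from $supp(x)$ by (X3), so $\omega(b) = 1$. Now $a$ and $b$ are distinct, indistinguishable and compatible (the latter since $x$ has compatible support), so Lemma \ref{compat} forces $Q(b) \subseteq Q(a)$, and $\omega(a) = 1$ would give $a = b$. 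Hence $\omega(a) \geq 2$.

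Given this edge characterisation, no vertex of $D(x)$ can be both a source (outdegree $\geq 1$) and a destination (indegree $\geq 1$), since sources have weight $\geq 2$ while destinations have weight $1$. I expect the most delicate step to be showing that each destination has a unique in-neighbour. Suppose for contradiction that distinct vertices $a_1 \neq a_2$ both send edges to a common destination $b$. Since $\omega(b) = 1$ and $b$ is indistinguishable from each $a_i$, Lemma \ref{compat} gives $Q(b) \subseteq Q(a_1) \cap Q(a_2)$, which is therefore non-empty, so Lemma \ref{distinguishable} yields that $a_1, a_2$ are indistinguishable. They are also compatible (common support) and distinct, so Lemma \ref{compress} applies; case (iii) is ruled out because being $(a_1, a_2)$- and $(a_2, a_1)$-compressed simultaneously would force $\omega(a_1) = \omega(a_2) = 1$ with $x_{a_1} = x_{a_2} = 1$, contradicting Lemma \ref{compat}. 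Cases (i) or (ii) place a directed edge in $E(x)$ between $a_1$ and $a_2$, turning one of them into a destination and contradicting its source status.

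To finish, each destination has indegree exactly $1$ and outdegree $0$, while each source has indegree $0$ and positive outdegree. So in the underlying undirected graph of $D(x)$, starting from any source $\rho$, paths cannot extend beyond length $1$: the unique neighbour of each leaf $\pi$ of $\rho$ is $\rho$ itself. Thus every connected component of $D(x)$ that contains an edge is a star with root some source $\rho$ and leaves the destinations $\rho$ points to, and components without edges are isolated vertices (trivial stars with no leaves). The remaining claims---$\omega(\rho) \geq 2$ for every root of positive outdegree and $\omega(\pi) = 1$ with $x_\pi = 1$ for every leaf $\pi$---are precisely the content of the opening edge characterisation.
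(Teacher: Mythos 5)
The proposal is correct and follows essentially the same route as the paper: establish that every edge of $D(x)$ points from a weight-$\geq 2$ vertex to a weight-$1$ vertex with $x$-value $1$, and then use Lemmas \ref{compat}, \ref{distinguishable} and \ref{compress} to show each weight-$1$ vertex has a unique in-neighbour. The only stylistic difference is that you unpack the definition of $(a,b)$-compression directly to get the edge characterisation, whereas the paper first rules out bidirectional edges and then reads the same facts off Lemma \ref{compress}; the two derivations are equivalent.
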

\begin{proof}
It suffices to prove the following:
\begin{enumerate}
\item If $(\rho, \pi)\in E(x)$ then $\omega(\rho)\geq2$, $\omega(\pi)=1$, $x_\pi=1$ and $(\pi, \rho)\notin E(x)$.
\item If $(\rho_1, \pi), (\rho_2, \pi)\in E(x)$ then $\rho_1=\rho_2$.
\end{enumerate}

Suppose that $(\rho, \pi)\in E(x)$, in particular $\rho$ and $\pi$ are indistinguishable. If $(\pi, \rho)\in E(x)$ also, then since $x$ is compressed we have by definition that $x(\rho,\pi)=x=x(\pi, \rho)$. However, from the definition of compression we see that the only way we can have $x(\rho,\pi)=x(\pi, \rho)$ is if $\omega(\pi)=\omega(\rho)=1$. But then by Lemma~\ref{compat}, $\pi=\rho$, a contradiction. We conclude that $(\pi, \rho)\notin E(x)$ and so (1) follows from Lemma~\ref{compress}.

Suppose now that $(\rho_1, \pi), (\rho_2, \pi)\in E(x)$. By (1) we know that $\omega(\pi)=1$ and $\omega(\rho_i)\geq2$ for $i=1,2$. By Lemma~\ref{compat} it follows that $Q(\pi)\subseteq Q(\rho_1)\cap Q(\rho_2)$ and so $\rho_1, \rho_2$ are indistinguishable by Lemma~\ref{distinguishable}. If $\rho_1\neq\rho_2$ then by Lemma~\ref{compress} we have that either $(\rho_1, \rho_2)\in E(x)$ or $(\rho_2, \rho_1)\in E(x)$, but this contradicts (1).
\end{proof}

Given a compressed optimal point $x\in X$ let
$$
L(x)=\{\tau\in V(x): d^{-}(\tau)>0\}
$$
and 
$$
R(x)=V(x)\backslash L(x).
$$
By Lemma~\ref{star}, $L(x)$ and $R(x)$ are the set of leaves and the set of roots of $D(x)$ respectively. 

\begin{lemma}\label{sphere}
Let $x\in X$ be a compressed optimal point. Then 
$$
F(x)=\sum_{\tau\in R(x)}\left(x_\tau^2+(2d^{+}(\tau)-\omega(\tau))x_\tau\right).
$$
\end{lemma}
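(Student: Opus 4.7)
The plan is to directly compute $F(x)$ by expanding the square and carefully tracking which terms survive, using the very specific structure of $D(x)$ for a compressed optimal point provided by Lemmas~\ref{compress} and~\ref{star}.

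First I would expand
\[
\left(\sum_{\tau}x_\tau\right)^2=\sum_{\tau}x_\tau^2+2\sum_{\{\sigma,\tau\}:\sigma\neq\tau}x_\sigma x_\tau,
\]
and split the unordered-pair sum into distinguishable and indistinguishable pairs. The distinguishable pairs exactly cancel with the middle term $2\sum_{\{\sigma,\tau\}\in D}x_\sigma x_\tau$ in the definition of $F$, leaving
\[
F(x)=\sum_{\tau\in V(x)}x_\tau^2+2\sum_{\substack{\{\sigma,\tau\}\subset V(x)\\ \sigma\neq\tau,\ \text{indist.}}}x_\sigma x_\tau-\sum_{\tau\in V(x)}\omega(\tau)x_\tau,
\]
where I have restricted the sums to $V(x)=\mathrm{supp}(x)$ since all other $x_\tau$ are zero.

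Next I would use the structure theorems. Since $x\in X$ has compatible support, any indistinguishable distinct pair $\{\sigma,\tau\}\subset V(x)$ is in fact compatible. Lemma~\ref{compress} then says at least one of $(\sigma,\tau)$, $(\tau,\sigma)$ lies in $E(x)$. Since $x$ is compressed, Lemma~\ref{star} part~(1) shows this edge is unique, and corresponds to a star edge from a root $\rho$ (with $\omega(\rho)\geq 2$) to a leaf $\pi$ (with $\omega(\pi)=1$ and $x_\pi=1$). Consequently the indistinguishable-pair sum rewrites as
\[
2\sum_{\substack{\{\sigma,\tau\}\subset V(x)\\ \sigma\neq\tau,\ \text{indist.}}}x_\sigma x_\tau=2\sum_{(\rho,\pi)\in E(x)}x_\rho x_\pi=2\sum_{\rho\in R(x)}d^{+}(\rho)\,x_\rho,
\]
where the last equality uses $x_\pi=1$ for every leaf $\pi$.

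The remaining bookkeeping is routine. Partitioning $V(x)=R(x)\cup L(x)$ and substituting $x_\pi=1$, $\omega(\pi)=1$ for $\pi\in L(x)$ yields
\[
\sum_{\tau\in V(x)}x_\tau^2=\sum_{\tau\in R(x)}x_\tau^2+|L(x)|,\qquad \sum_{\tau\in V(x)}\omega(\tau)x_\tau=\sum_{\tau\in R(x)}\omega(\tau)x_\tau+|L(x)|,
\]
and $|L(x)|=\sum_{\rho\in R(x)}d^{+}(\rho)$ since each leaf contributes $d^{-}=1$ to the total indegree of a union of stars. Plugging these into the expression for $F(x)$, the two $|L(x)|$ terms cancel and we obtain exactly
\[
F(x)=\sum_{\tau\in R(x)}\left(x_\tau^2+(2d^{+}(\tau)-\omega(\tau))x_\tau\right),
\]
as required. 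There is no substantive obstacle here beyond careful bookkeeping; the content of the lemma is really a translation of the star-structure of $D(x)$ into an identity for the quadratic form $F$, and the only thing one must be careful about is that every indistinguishable crossing term in $\left(\sum x_\tau\right)^2$ is accounted for exactly once by a directed edge of $D(x)$.
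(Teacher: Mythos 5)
Your proof is correct and follows essentially the same route as the paper: expand the square, cancel the distinguishable cross terms, use Lemma~\ref{star} to identify the surviving indistinguishable pairs with directed star edges to obtain $2\sum_{\rho\in R(x)}d^{+}(\rho)x_\rho$, and observe that the leaf contributions $x_\pi^2-\omega(\pi)x_\pi$ vanish. The only cosmetic difference is that you phrase the vanishing of leaf terms as the cancellation of two $\abs{L(x)}$ summands, whereas the paper observes directly that $\sum_{\tau\in L(x)}(x_\tau^2-\omega(\tau)x_\tau)=0$.
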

\begin{proof}
 Lemma~\ref{star} shows that for any indistinguishable pair $\pi, \rho\in V(x)$, where $\pi\neq\rho$, exactly one of $(\pi, \rho)$ and $(\rho, \pi)$ is in $E(x)$ and so
$$
F(x)=\sum_{\tau\in V(x)}x_\tau^2+2\sum_{(\sigma, \tau)\in E(x)}x_\sigma x_\tau-\sum_{\tau\in V(x)}\omega(\tau)x_\tau.
$$
By Lemma~\ref{star} we may write
$$
\sum_{(\sigma, \tau)\in E(x)}x_\sigma x_\tau=\sum_{\rho\in R(x)}x_\rho\left(\sum_{\pi: (\rho, \pi)\in E(x)}x_\pi\right)=\sum_{\rho\in R(x)}d^{+}(\rho)x_\rho.
$$
Moreover by Lemma~\ref{star} we have $\sum_{\tau\in L(x)}(x_\tau^2-\omega(\tau)x_\tau)=0$. The result follows. 
\end{proof}

The key feature here is that for a compressed optimal point $x$, the constraint
equation $F(x) = 0$ is spherical. This allows us to more easily apply standard optimisation techniques and this will be the concern of the next section. For now it will be useful for us to establish some degree conditions on the vertices of $D(x)$ for a compressed optimal point $x\in X$.

\begin{lemma}\label{degree0}
Let $x\in X$ be a compressed optimal point, then $d^{+}(\sigma)\leq2^{\omega(\sigma)-1}$ for all $\sigma\in V(x)$.
\end{lemma}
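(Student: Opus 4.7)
The plan is to split into cases based on the role of $\sigma$ in its star in $D(x)$. Recall from the remark following the definition of compatible support that any $\sigma\in V(x)$ has $\omega(\sigma)\geq1$, since a $\tau$ of weight $0$ is incompatible with itself and so forced out of $\mathrm{supp}(x)$ by (X3). If $\sigma$ is a leaf, or a root of outdegree $0$, then $d^+(\sigma)=0$ and the inequality is immediate, because by Lemma~\ref{star} the graph $D(x)$ is a vertex-disjoint union of stars and every edge of $D(x)$ is directed from a root to a leaf. It therefore suffices to handle a root $\sigma$ with $m:=d^+(\sigma)\geq1$.

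In this remaining case Lemma~\ref{star} gives $\omega(\sigma)\geq2$, and every leaf $\pi_i$ of the star rooted at $\sigma$ satisfies $\omega(\pi_i)=1$. Since the edge $(\sigma,\pi_i)\in E(x)$ is by definition only available when $\sigma$ and $\pi_i$ are indistinguishable, Lemma~\ref{compat} forces $Q(\pi_i)\subseteq Q(\sigma)$. Thus each $2$-element subcube $Q(\pi_i)$ sits inside the larger subcube $Q(\sigma)$.

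The next step is to show that the subcubes $Q(\pi_1),\ldots,Q(\pi_m)$ are pairwise disjoint. Because $D(x)$ is a \emph{vertex-disjoint} union of stars and all the $\pi_i$ lie in the single star rooted at $\sigma$, the only edges of $D(x)$ incident to any $\pi_i$ are the edges $(\sigma,\pi_i)$; in particular there are no edges at all between two distinct leaves. Hence $\{\pi_1,\ldots,\pi_m\}$ is an independent set in $D(x)$, and Corollary~\ref{indep} upgrades this to distinguishability, whereupon Lemma~\ref{distinguishable} gives pairwise disjointness of the $Q(\pi_i)$.

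Assembling the pieces, the disjoint union $\bigcup_{i=1}^m Q(\pi_i)$ lies inside $Q(\sigma)$, so
\[
2m \;=\; \sum_{i=1}^m \lvert Q(\pi_i)\rvert \;\leq\; \lvert Q(\sigma)\rvert \;=\; 2^{\omega(\sigma)},
\]
yielding $d^+(\sigma)=m\leq 2^{\omega(\sigma)-1}$. I do not anticipate any real obstacle here: the lemma is essentially a packing inequality whose ingredients have all been prepared — the star structure (Lemma~\ref{star}), subcube containment for indistinguishable pairs of weight $1$ (Lemma~\ref{compat}), and the distinguishability of independent sets in $D(x)$ for optimal $x$ (Corollary~\ref{indep}). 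The only thing to be careful about is not to confuse ``disjoint union of stars'' with something weaker that would allow leaf-to-leaf edges; the vertex-disjointness is what reduces the problem to a clean packing of two-dimensional subcubes inside $Q(\sigma)$.
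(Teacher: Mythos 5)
Your proof is correct and follows the same route as the paper: identify the leaves of the star at $\sigma$, observe they form an independent set in $D(x)$ (hence distinguishable by Corollary~\ref{indep}), invoke Lemma~\ref{compat} to get $Q(\pi_i)\subseteq Q(\sigma)$, and conclude by the packing inequality $2d^{+}(\sigma)\leq 2^{\omega(\sigma)}$. The only cosmetic difference is that you spell out the trivial $d^{+}(\sigma)=0$ case and the fact that $\omega(\sigma)\geq1$, both of which the paper leaves implicit.
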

\begin{proof}
Suppose that $\rho\in V(x)$ is such that $d^{+}(\rho)>0$. By Lemma~\ref{star}, $\rho$ is the root of a star in $D(x)$. Let $L$ be the set of leaves of this star (so in particular $\abs{L}=d^{+}(\rho)$ and $\omega(\pi)=1$ for all $\pi\in L$). Note that $L$ is an independent set in $D(x)$ and therefore it is a distinguishable set by Corollary~\ref{indep}. Note further that for each $\pi\in L$, $\rho$ and $\pi$ are indistinguishable and compatible and hence $Q(\pi)\subseteq Q(\rho)$ by Lemma~\ref{compat}. It follows that
\[
2d^{+}(\rho)=\sum_{\pi\in L}\abs{Q(\pi)}\leq\abs{Q(\rho)}=2^{\omega(\rho)}.
\]
\end{proof}

We can now bootstrap, using the previous two lemmas to establish a much stronger degree condition. The idea behind the proof of the following lemma is readily explained however it is notationally laborious. The idea is that if $x\in X$ is a compressed optimal point and a star in $D(x)$ with root $\rho$ has $>\omega(\rho)$ leaves, then one can contradict the optimality of $x$ by replacing this star with a collection of stars whose roots have less weight. First let us generalise an earlier notation.

\begin{definition}
Let $\sigma\in\{0,1,\ast\}^k$ and let $W=\{i\in[k]: \sigma_i=\ast\}$. Then for $S\subseteq W$ define
\[
Q(\sigma; S)=\{\tau\in\{0,1,\ast\}^k: \tau_i\in\{0,1\}\ \text{for}\ i\in W\backslash S\ \text{and}\ \tau_i=\sigma_i\ \text{otherwise}\}.
\]
\end{definition}
Note that elements of $Q(\sigma; S)$ are pairwise distinguishable and that $Q(\sigma; \emptyset)$ is simply the set $Q(\sigma)$. We may think of $Q(\sigma; S)$ as a decomposition of $Q(\sigma)$ into `parallel' subcubes of dimension $\abs{S}$.

\begin{lemma}\label{degree}
Let $x\in X$ be a compressed optimal point, then $d^{+}(\sigma)\leq\omega(\sigma)$ for all $\sigma\in V(x)$.
\end{lemma}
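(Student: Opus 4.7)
The plan is to argue by contradiction: assume some $\sigma\in V(x)$ has $d^{+}(\sigma)=m>w:=\omega(\sigma)$, and construct $x'\in X$ with $\norm{x'}>\norm{x}$, violating optimality. By Lemma~\ref{star}, $\sigma$ is the root of a star with $m$ leaves $\pi_1,\ldots,\pi_m$, each of weight $1$ and value $1$. By Corollary~\ref{indep} the $\pi_i$'s are pairwise distinguishable, and by Lemma~\ref{compat} each satisfies $Q(\pi_i)\subseteq Q(\sigma)$, so the $Q(\pi_i)$'s form a matching $M$ in the $w$-dimensional subcube $Q(\sigma)$. The assumption $m>w$ forces $M$ to be quite dense inside $Q(\sigma)$.

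The construction replaces the single star at $\sigma$ by several stars rooted at subcubes of smaller dimension obtained from a decomposition $Q(\sigma;S)$ with $|S|<w$. The cleanest instance is to split in a single direction $j\in W:=\{i:\sigma_i=\ast\}$, producing two $(w-1)$-subcubes $\sigma^{0},\sigma^{1}$. Each leaf $\pi_i$ either lies inside one of the halves (non-crossing) or spans the split (precisely when its direction is $j$). Form $x'$ by setting $x'_\sigma=0$, $x'_{\sigma^{0}}=x'_{\sigma^{1}}=y$ for a value $y$ to be chosen, keeping the non-crossing leaves as leaves of the appropriate new root, and deleting the $c$ crossing leaves.

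Verifying $x'\in X$ requires: compatibility of the new support, which uses the observation that no element of $V(x)\setminus\{\sigma,\pi_1,\ldots,\pi_m\}$ is indistinguishable from $\sigma$ (two distinct weight-$\geq 2$ roots cannot be indistinguishable, via Lemma~\ref{star}(1) and Lemma~\ref{compress}), so they are also distinguishable from $\sigma^{0}$ and $\sigma^{1}$; and the constraint $F(x')\leq 0$, which via Lemma~\ref{sphere} reduces to a local comparison of the old star's contribution $x_\sigma^{2}+(2m-w)x_\sigma$ against the new stars' contributions $\sum_{i=0,1}[y^{2}+(2m_i-(w-1))y]$, where $m_i$ counts non-crossing leaves on the $\sigma^{i}$-side. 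Choosing $y$ to saturate this constraint and computing $\norm{x'}-\norm{x}=2y-x_\sigma-c$, the strict improvement $\norm{x'}>\norm{x}$ reduces after squaring to the inequality
\[
x_\sigma^{2}+2(m-1)x_\sigma>c(2m-c-2w+2).
\]

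The main obstacle is arranging that $j$ can be chosen so this inequality holds. When $M$ uses strictly fewer than $w$ coordinate directions, one picks $j$ outside the set $D_M$ of directions used by $M$, giving $c=0$, and the inequality collapses to $x_\sigma^{2}+2(m-1)x_\sigma>0$, immediate from $x_\sigma>0$. The delicate case is $|D_M|=w$, where every splitting direction produces some crossing leaves. Here I would pick $j$ minimizing $m_j$ so that $c\leq m/w$, and combine the above construction with an auxiliary optimality argument: since $x$ is optimal, the feasible direction obtained by simultaneously decreasing $x_\sigma$ and increasing $x_\tau$ for some weight-$1$ element $\tau$ compatible with $V(x)$ cannot improve $\norm{x}$, and this yields a lower bound on $x_\sigma$ sufficient to close the inequality. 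If no such $\tau$ exists, the matching $M$ must be perfect in $Q(\sigma)$, a highly structured situation in which one can instead replace $\sigma$ with a full set of parallel weight-$2$ subcubes from $Q(\sigma;S)$ with $|S|=2$ and verify the improvement directly.
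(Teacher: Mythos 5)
Your overall strategy (modify the star rooted at $\sigma$ to obtain a contradiction with optimality) matches the paper's, and your Case~1 ($|D_M|<w$, so $c=0$) is fine. The difficulty is that you are trying to split $\sigma$ into two codimension-one halves while retaining the non-crossing leaves, and this forces you into an analysis of the matching $M$ that does not close. In the delicate case $|D_M|=w$, the ``auxiliary optimality argument'' you invoke does not actually give a lower bound on $x_\sigma$: the direction ``decrease $x_\sigma$ by $\delta$, increase some $x_\tau$ by $\delta$'' is $\ell_1$-norm-preserving, so the statement that it ``cannot improve $\norm{x}$'' is vacuous and yields no information. Your fallback (``replace $\sigma$ with parallel weight-$2$ subcubes from $Q(\sigma;S)$ with $|S|=2$'') also does not visibly close the argument: you do not specify the values on the new weight-$2$ subcubes, and $x_\sigma$ has no a priori upper bound since (X2) only constrains weight-$1$ coordinates, so a norm comparison like $2^{w-1}-m-x_\sigma\geq 0$ is not available. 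Moreover you are targeting a \emph{strict} improvement $\norm{x'}>\norm{x}$, which is a stronger requirement than you actually need and which may not be achievable.

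The paper's construction avoids all of this by not keeping any leaves. After observing (via Lemma~\ref{degree0}) that $d^+(\sigma)>\omega(\sigma)$ forces $\omega(\sigma)\geq 3$, it deletes $\sigma$ and all $d^+(\sigma)$ of its leaves and replaces them with the full parallel decomposition $Q(\sigma;\{1\})$ (each with value $1$) together with a single weight-$2$ subcube $\pi\in Q(\sigma;\{1,2\})$ carrying the original mass $x'_\pi=x_\sigma$. This gives $\norm{x'}-\norm{x}=2^{\omega(\sigma)-1}-d^+(\sigma)\geq 0$, where the inequality is exactly Lemma~\ref{degree0} --- strictness is not needed. Then the star-sum formula from Lemma~\ref{sphere} gives the clean local comparison
\[
F(x')-F(x)=-\bigl(2d^+(\sigma)-\omega(\sigma)-2\bigr)x_\sigma<0,
\]
since $2d^+(\sigma)-\omega(\sigma)>\omega(\sigma)\geq 3$ and $x_\sigma>0$. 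So $x'\in X$ is optimal with $F(x')<0$, contradicting Lemma~\ref{zero}. The essential ideas you are missing are (i) the freedom to discard all of $\sigma$'s leaves and replace them by a \emph{canonical} decomposition independent of $M$, eliminating your case analysis entirely, and (ii) reaching the contradiction through $F(x')<0$ and Lemma~\ref{zero} rather than through strict norm improvement.
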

\begin{proof}
By Lemma~\ref{star} we may write $V(x)=S_1\cup\ldots\cup S_q$, a disjoint union where each $S_i$ is the vertex set of a star in $D(x)$. Suppose that there exists $\sigma\in V(x)$ such that $d^{+}(\sigma)>\omega(\sigma)$. Without loss of generality assume $\sigma$ is the root of  $S_1$. By Lemma~\ref{sphere} we then have that $\omega(\sigma)< d^{+}(\sigma)\leq2^{\omega(\sigma)-1}$ and so $\omega(\sigma)\geq3$. Without loss of generality assume that $\sigma_1=\sigma_2=\ast$. By Lemma~\ref{star} we have $x_\tau=1$ for all $\tau\in L(x)$ and so 
\begin{equation}\label{abs1}
\|x\|=\abs{L(x)}+\sum_{\tau\in R(x)}x_\tau.
\end{equation}
 We proceed by modifying $x$, being careful to stay within the set $X$. Take $\pi\in Q(\sigma; \{1,2\})$ and note that $\omega(\pi)=2$. Consider now the element $x'\in \mathbb{R}^{\ast}$ defined as follows. Let $x'_\pi=x_\sigma$, $x'_\tau=1$ for all $\tau\in Q(\sigma; \{1\})$, $x'_\tau=x_\tau$ for all $\tau\in S_2\cup\ldots\cup S_q$ and $x'_\tau=0$ otherwise. We now check that $x'\in X$. Clearly $x'_\tau\leq1$ whenever $\omega(\tau)=1$. Note that $supp(x')=\{\pi\}\cup Q(\sigma; \{1\})\cup S_2\cup\ldots\cup S_q$. Now, if $\tau\in \{\pi\}\cup Q(\sigma; \{1\})$ we have $Q(\tau)\subseteq Q(\sigma)$ and since $\sigma\in S_1$, we know that $\sigma$, and hence also $\tau$, is distinguishable from each element of $S_2\cup\ldots\cup S_k$. Since $\tau_1=\ast$ for each $\tau\in \{\pi\}\cup Q(\sigma; \{1\})$ we see that $\{\pi\}\cup Q(\sigma; \{1\})$ contains no incompatible pairs. It follows that $x'$ has compatible support. Finally, note that by a calculation similar to that in the proof of Lemma~\ref{sphere} we have
\begin{align}\label{array}
F(x')&=x_\sigma^2+2x_\sigma+ \sum_{\tau\in R(x)\backslash \{\sigma\}}\left(x_\tau^2+\left(2d^{+}(\tau)-\omega(\tau)\right)x_\tau\right)\\
&=F(x)-(2d^{+}(\sigma)-\omega(\sigma)-2)x_\sigma.
\end{align}
Recalling that $d^{+}(\sigma)>\omega(\sigma)$ we have $2d^{+}(\sigma)-\omega(\sigma)>\omega(\sigma)\geq3$. Since $\sigma\in supp(x)$, we also have $x_\sigma>0$ and so \eqref{array} implies that $F(x')<F(x)=0$. Thus we do indeed have $x'\in X$. Note that 
\begin{equation}\label{abs2}
\|x'\|=\abs{L(x)}-d^{+}(\sigma)+\abs{Q(\sigma; \{1\})} + \sum_{\tau\in R(x)}x_\tau,
\end{equation}
and observe that $d^{+}(\sigma)\leq2^{\omega(\sigma)-1}=\abs{Q(\sigma; \{1\})}$ by Lemma~\ref{sphere}. It now follows from \eqref{abs1} and \eqref{abs2} that $\|x'\|\geq\|x\|$, and so $x'$ is an optimal point of $X$. However we have shown that $F(x')<0$ contradicting Lemma~\ref{zero}.
\end{proof}
Gathering all the information we have obtained on compressed optimal points, we show that a proof of the following proposition is almost enough to deduce Proposition~\ref{props2}. Let us remind ourselves that in the definition of a distinguishable set (Definition \ref{def:dist}), we require all elements of the set to have weight at least $1$.

\begin{proposition}\label{convex}
Let $\mathcal{D}\subseteq\{0,1,\ast\}^k$ be a distinguishable set and let $\Delta=\{d_\tau : \tau\in \mathcal{D}\}$ be a set of integers satisfying $0\leq d_\tau\leq\omega(\tau)$ for all $\tau\in \mathcal{D}$, and $d_\tau=0$ whenever $\omega(\tau)=1$. Suppose that $x\in\mathbb{R}^{\ast}$ is a vector with $supp(x)=\mathcal{D}$ satisfying 
\begin{enumerate}
\item $$\sum_{\tau\in \mathcal{D}}\left(x_\tau^2+(2d_\tau-\omega(\tau))x_\tau\right)=0,$$
\item $x_\tau\leq1$ whenever $\omega(\tau)=1$.
\end{enumerate}
Then 
$$
\sum_{\tau\in \mathcal{D}}x_\tau\leq 2^{k-1}-\sum_{\tau\in \mathcal{D}}d_\tau.
$$
Furthermore we have equality only if $x\in O$ and $\Delta=\{0\}$.
\end{proposition}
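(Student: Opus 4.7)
My plan is to establish the bound via a pointwise inequality for each $\tau \in \mathcal{D}$, then sum and use the quadratic equality (1) to cancel the cross terms, and finally invoke Lemma~\ref{dist}. Setting $\alpha_\tau := \omega(\tau) - 2d_\tau$, constraint (1) rewrites as $\sum_{\tau \in \mathcal{D}} x_\tau(x_\tau - \alpha_\tau) = 0$. The key pointwise claim I would prove is
\[
2^{\omega(\tau)} - 2x_\tau - 2d_\tau + x_\tau(x_\tau - \alpha_\tau) \geq 0 \qquad \text{for every } \tau \in \mathcal{D}.
\]
Summing and using the quadratic equality to eliminate the $\sum_\tau x_\tau(x_\tau - \alpha_\tau)$ term yields $\sum_\tau 2^{\omega(\tau)} \geq 2\sum_\tau x_\tau + 2\sum_\tau d_\tau$, and Lemma~\ref{dist} then delivers the desired $\sum_\tau x_\tau \leq 2^{k-1} - \sum_\tau d_\tau$.

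The displayed inequality is just the non-negativity of the quadratic
\[
q_\tau(t) := t^2 - (\alpha_\tau + 2)t + 2^{\omega(\tau)} - 2d_\tau
\]
at $t = x_\tau$. For $\omega(\tau) \geq 2$ I would show $q_\tau \geq 0$ on all of $\mathbb{R}$ by a discriminant computation: using $\alpha_\tau + 2d_\tau = \omega(\tau)$ the discriminant simplifies to $\alpha_\tau^2 + 4\omega(\tau) + 4 - 4 \cdot 2^{\omega(\tau)}$, and since $0 \leq d_\tau \leq \omega(\tau)$ forces $|\alpha_\tau| \leq \omega(\tau)$, it suffices to verify the elementary bound $(\omega + 2)^2 \leq 4 \cdot 2^\omega$ for $\omega \geq 2$, with equality exactly at $\omega = 2$. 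For $\omega(\tau) = 1$ the hypotheses force $d_\tau = 0$ and $q_\tau(t) = (t-1)(t-2)$, which is non-negative precisely on $(-\infty,1]\cup[2,\infty)$; the bound $x_\tau \leq 1$ from constraint (2) supplies exactly what is needed. This is the only point at which constraint (2) enters the argument.

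For the equality characterisation I would trace back both steps. Lemma~\ref{dist} is tight iff $\mathcal{D}$ is a decomposition, while $q_\tau(x_\tau) = 0$ forces $x_\tau$ to be a root of $q_\tau$. Combined with $x_\tau > 0$ (since $\tau \in supp(x)$) and constraint (2), the discriminant analysis shows this happens only when $(\omega(\tau), d_\tau, x_\tau) \in \{(1,0,1),(2,0,2)\}$: for $\omega(\tau) \geq 3$ and for $\omega(\tau) = 2$ with $d_\tau = 1$ the discriminant is strictly negative, while $\omega(\tau) = 2, d_\tau = 2$ gives $q_\tau(t) = t^2$ whose only root is excluded by $x_\tau > 0$. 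Combined with the decomposition condition from Lemma~\ref{dist}, this recovers precisely $x \in O$ and $\Delta = \{0\}$.

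The main obstacle is choosing the correct pointwise quadratic $q_\tau$; once that is written down, verification and the equality analysis are essentially routine. The guess is natural in retrospect since the extremal configuration $x \in O$ saturates Lemma~\ref{dist} and has $x_\tau = 2^{\omega(\tau)-1}$ for each $\tau \in supp(x)$, which is precisely the double root of $q_\tau$ in the only two cases, $(\omega(\tau), d_\tau) \in \{(1,0),(2,0)\}$, for which $q_\tau$ is a perfect square.
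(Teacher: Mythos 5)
Your proof is correct and takes a genuinely different, more elementary route than the paper's. The paper invokes the KKT conditions with Slater's qualification (Theorem~\ref{KKT}), the general spherical-constraint optimisation Lemma~\ref{lag}, and the inductive inequality Lemma~\ref{shift}, then splits on whether $\sum_\tau(\omega(\tau)-2d_\tau)^2>\abs{\mathcal{D}}$. You bypass all of this by exhibiting the explicit pointwise certificate $q_\tau(t)=t^2-(\alpha_\tau+2)t+2^{\omega(\tau)}-2d_\tau\geq0$, summing so that constraint (1) cancels the quadratic part, and closing with Lemma~\ref{dist}. Your discriminant computation (correctly simplifying to $\alpha_\tau^2+4\omega(\tau)+4-4\cdot2^{\omega(\tau)}$ via $\alpha_\tau+2d_\tau=\omega(\tau)$) together with the elementary bound $(\omega+2)^2\leq4\cdot2^\omega$ for $\omega\geq2$ (equality only at $\omega=2$) replaces the content of Lemmas~\ref{lag} and~\ref{shift}, and constraint (2) enters only at $\omega(\tau)=1$ to select the left branch of $q_\tau(t)=(t-1)(t-2)\geq0$, exactly as needed. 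Your equality analysis is also complete: strict negativity of the discriminant for $\omega(\tau)\geq3$ and for $(\omega(\tau),d_\tau)=(2,1)$, the double root at $t=0$ for $(2,2)$ (excluded by $\tau\in\operatorname{supp}(x)$), the double root $t=2$ for $(2,0)$, and $x_\tau=1$ forced for $\omega(\tau)=1$, combined with tightness in Lemma~\ref{dist}, yield $x\in O$ and $\Delta=\{0\}$. Your sum-of-squares-style certificate buys a shorter, self-contained proof with no reliance on convex optimisation machinery; the paper's route is more modular (Lemma~\ref{lag} is a general statement about spherical constraints) but requires a two-case split and two auxiliary lemmas. Both are sound, and your version is arguably the cleaner one.
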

A proof of Proposition~\ref{convex} will be the focus of the next section, for now we note that it has the following corollary

\begin{corollary}\label{Ocompress}
$O$ is the set of compressed optimal points of $X$. In particular, if $x\in X$ then $\norm{x}\leq2^{k-1}$.
\end{corollary}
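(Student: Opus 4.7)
The plan is to combine the structural lemmas established for compressed optimal points (Lemmas \ref{star}, \ref{sphere}, \ref{zero}, \ref{degree}, and Corollary \ref{indep}) with the still-to-be-proved Proposition \ref{convex} to pin down both the norm and the structure of such points, and then to use Lemma \ref{exist} to transfer the norm bound to every $x\in X$.

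First I would verify that $O\subseteq X$: any $x\in O$ has support a decomposition, so all pairs in $supp(x)$ are distinguishable, which yields condition (X3) and reduces $F(x)$ to $\sum_\tau(x_\tau^2-\omega(\tau)x_\tau)$, which vanishes by (O2) (since $1^2-1\cdot 1=0$ and $2^2-2\cdot 2=0$). Conditions (X2) and (X4) are immediate. A direct computation using Lemma \ref{dist} (equality case) shows $\|x\|=2^{k-1}$ for every $x\in O$, so the supremum of $\|\cdot\|$ on $X$ is at least $2^{k-1}$.

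Next, let $x$ be an arbitrary compressed optimal point of $X$. I would apply Proposition \ref{convex} with $\mathcal{D}=R(x)$ and $d_\tau=d^+(\tau)$. The hypotheses are met: $R(x)$ is an independent set in $D(x)$, hence distinguishable by Corollary \ref{indep}; Lemma \ref{star} gives $d^+(\tau)=0$ whenever $\omega(\tau)=1$ (any root of positive outdegree has weight $\ge 2$); Lemma \ref{degree} gives $d^+(\tau)\le\omega(\tau)$; the spherical identity is exactly Lemma \ref{sphere} combined with Lemma \ref{zero}; and (X2) delivers $x_\tau\le 1$ for weight-$1$ elements of $R(x)$. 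Proposition \ref{convex} then yields
\[
\sum_{\tau\in R(x)}x_\tau\ \le\ 2^{k-1}-\sum_{\tau\in R(x)}d^+(\tau)\ =\ 2^{k-1}-|L(x)|,
\]
since each leaf contributes $1$ to $\sum d^+(\tau)$. Adding $\sum_{\tau\in L(x)}x_\tau=|L(x)|$ (Lemma \ref{star} gives $x_\pi=1$ for every leaf) yields $\|x\|\le 2^{k-1}$. Combined with the lower bound from the previous paragraph, compressed optimal points have norm exactly $2^{k-1}$, so the equality clause of Proposition \ref{convex} forces $L(x)=\emptyset$ and $x\in O$. This gives the inclusion \emph{compressed optimal points}~$\subseteq O$.

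For the reverse inclusion, let $x\in O$. Since $supp(x)$ is a decomposition, every two distinct elements of $supp(x)$ are distinguishable, so $D(x)$ has no edges and $x$ is vacuously compressed; moreover $\|x\|=2^{k-1}$ equals the supremum, so $x$ is optimal. Finally, to obtain the ``in particular'' clause, let $x\in X$ be arbitrary and pick any optimal $z\in X$; by Lemma \ref{exist} some sequence of compressions (each preserving the $\ell_1$-norm) turns $z$ into a compressed optimal point $z'$, so $\|x\|\le\|z\|=\|z'\|\le 2^{k-1}$. The only real subtlety in the whole argument is making sure the hypotheses of Proposition \ref{convex} are packaged cleanly, and being careful that $R(x)$ (not all of $V(x)$) is what is distinguishable; the leaves are then absorbed into the $|L(x)|$ term via Lemma \ref{star}.
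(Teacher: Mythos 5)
Your proof is correct and follows essentially the same route as the paper: apply Proposition \ref{convex} with $\mathcal{D}=R(x)$ and $d_\tau=d^+(\tau)$ (hypotheses supplied by Corollary \ref{indep}, Lemmas \ref{star}, \ref{sphere}, \ref{zero}, \ref{degree}, and (X2)), add back the leaf contribution $|L(x)|=\sum_{\tau\in R(x)}d^+(\tau)$ to recover $\|x\|$, and analyse the equality case. The extra details you include — verifying $O\subseteq X$ and spelling out the existence argument via Lemma \ref{exist} — are implicit in the paper but not a different argument.
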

\begin{proof}
Let $x\in X$ be a compressed optimal point. Note that $R(x)$ is an independent set in the digraph $D(x)$ and hence by Corollary~\ref{indep}, $R(x)$ is a distinguishable set. Set $\Delta=\{d^{+}(\tau): \tau\in R(x)\}$ and let $x'$ be the element of $\mathbb{R}^{\ast}$ supported on $R(x)$ such that $x'_\tau=x_\tau$ for all $\tau\in R(x)$. Note that by Lemmas~\ref{zero}, \ref{sphere}, \ref{degree} and by the definition of the set $X$, we have that $\Delta$ and $x'$ satisfy the conditions in the statement of Proposition~\ref{convex}. Assuming Proposition~\ref{convex}, it therefore follows that 
\begin{equation}\label{latter}
\sum_{\tau\in R(x)}x_\tau\leq 2^{k-1}-\sum_{\tau\in R(x)}d^{+}(\tau),
\end{equation}
with equality only if $x'\in O$ and $d^{+}(\tau)=0$ for all $\tau\in R(x)$. The latter condition implies that $x'=x$ and so we have equality in \eqref{latter} only if $x\in O$. By Lemma~\ref{star},
$$
\|x\|=\sum_{\tau\in R(x)}x_\tau+\sum_{\tau\in R(x)}d^{+}(\tau),
$$
and so it follows that $\norm{x}\leq 2^{k-1}$ with equality only if $x\in O$. The result follows by noting that for all $z\in O$, $\norm{z}=2^{k-1}$ and $z$ is compressed.

\end{proof}

It is now clear that Proposition~\ref{props2} would follow if we could also prove the following.

\begin{proposition}\label{opt}
If $x\in X$ is an optimal point, then $x$ is compressed.
\end{proposition}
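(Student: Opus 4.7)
The plan is a minimal-counterexample argument. Suppose the claim fails, and pick an optimal $x\in X$ that is not compressed and that minimizes $|V(x)|$ among such points. Since $x$ is not compressed, some $(\pi,\rho)\in E(x)$ satisfies $x(\pi,\rho)\neq x$, and Lemma~\ref{compress} then forces case (iii): both $(\pi,\rho)$ and $(\rho,\pi)$ lie in $E(x)$ and $A=B$. By Lemma~\ref{compat}, at least one of $\omega(\pi),\omega(\rho)$ is at least $2$; take $\omega(\rho)\geq 2$. Put $y=x(\pi,\rho)$. Then $y$ is optimal with $|V(y)|=|V(x)|-1$, so by minimality $y$ is compressed, and Corollary~\ref{Ocompress} gives $y\in O$.

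Because $y\in O$, the support $V(y)$ is a decomposition of $Q_k$, every $\sigma\in V(y)$ has $\omega(\sigma)\in\{1,2\}$, and $y_\sigma=2^{\omega(\sigma)-1}$. Combined with $\omega(\rho)\geq 2$ this forces $\omega(\rho)=2$ and $x_\pi+x_\rho=y_\rho=2$. Next we evaluate the identity $A=B$. Since $V(y)\setminus\{\rho\}$ is pairwise distinguishable from $\rho$ and $\pi$ is indistinguishable from $\rho$, Lemma~\ref{dist} gives $A=\sum_{\sigma\in V(y)\setminus\{\rho\}}2^{\omega(\sigma)}+\omega(\rho)=2^k-2$; similarly $B=2^k-\sum_{\sigma\in I(\pi)}2^{\omega(\sigma)}+\omega(\pi)$, where $I(\pi)=\{\sigma\in V(y):\sigma\text{ is indistinguishable from }\pi\}$. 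The equation $A=B$ rearranges to
\[
\sum_{\sigma\in I(\pi)}2^{\omega(\sigma)}=\omega(\pi)+2.
\]

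The crux, which we expect to be the main obstacle, is ruling out this identity. The idea is geometric: because $V(y)$ is a decomposition, the subcube $Q(\pi)$ is partitioned into the non-empty blocks $Q(\pi)\cap Q(\sigma)$ indexed by $\sigma\in I(\pi)$, and each block has size at most $|Q(\sigma)|=2^{\omega(\sigma)}$. Hence
\[
2^{\omega(\pi)}=\sum_{\sigma\in I(\pi)}|Q(\pi)\cap Q(\sigma)|\leq\sum_{\sigma\in I(\pi)}2^{\omega(\sigma)}=\omega(\pi)+2,
\]
which fails for $\omega(\pi)\geq 3$. The remaining cases $\omega(\pi)\in\{1,2\}$ are dispatched directly: $\omega(\pi)=1$ yields $\sum_{\sigma\in I(\pi)\setminus\{\rho\}}2^{\omega(\sigma)}=-1$, absurd; while $\omega(\pi)=2$ forces equality throughout, so $Q(\rho)\subseteq Q(\pi)$, and since both subcubes have size $4$ this gives $\pi=\rho$, contradicting distinctness.
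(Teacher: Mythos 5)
Your proof is correct, and it ends in the same place as the paper's (the equation $\sum_{\sigma\in I(\pi)}2^{\omega(\sigma)}=\omega(\pi)+2$ is exactly the paper's $2w_1+4w_2-\omega(\pi)=2$, and the subcube-counting contradiction is identical). The structural setup is also equivalent: the paper looks at the last step $x_{m-1}\to x_m$ of the compression sequence from Lemma~\ref{exist}, while you reach the same one-step-from-compressed configuration via a minimal counterexample on $|V(x)|$. The genuine difference is how the key identity is obtained. The paper gets $\frac{\partial F}{\partial x_\pi}(z)=\frac{\partial F}{\partial x_\rho}(z)$ from a second-order Taylor expansion of $F$, using $F(y)=F(z)=0$ (Lemma~\ref{zero}) together with the observation $p^THp=0$ for $p=e_\pi-e_\rho$. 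You instead invoke case (iii) of Lemma~\ref{compress}: since $x$ is not $(\pi,\rho)$-compressed, cases (i) and (ii) are excluded and the proof of that lemma shows (iii) corresponds precisely to $A=B$ in the expansion $F(x)=(\sum_\tau x_\tau)^2-Ax_\rho-Bx_\pi-C$, and $A=B$ is exactly the vanishing of $p^T\nabla F$. This is a nice simplification: it avoids introducing the matrix $H$ and computing $p^THp$, at the cost of leaning on the internal structure of Lemma~\ref{compress}'s proof rather than just its statement (the statement of case (iii) does not explicitly say $A=B$; one must argue that the three cases are mutually exclusive and each arises from a sign of $A-B$). Two small points worth making explicit in a polished write-up: (1) the WLOG ``take $\omega(\rho)\geq 2$'' is legitimate because in case (iii) the roles of $\pi,\rho$ are symmetric and, once $\omega(\rho)\geq 2$, the compression $x(\pi,\rho)$ automatically removes $\pi$ from the support and hence differs from $x$; (2) when computing $A$ and $B$ you are implicitly using that the coefficients do not depend on $x_\pi,x_\rho$ and that $x$ and $y$ agree off $\{\pi,\rho\}$, so evaluating them on $y\in O$ is permissible.
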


We prove Proposition~\ref{opt} in Section \ref{sec:stab}.

\section{Constrained Optimisation and a proof of Proposition~\ref{convex}}\label{sec:convex}
In this section we prove Proposition~\ref{convex} thus finalising the main stepping stone toward a proof of Proposition~\ref{props2}. We use standard tools from the theory of convex optimisation to exploit the spherical constraint found in the previous section. This will lead us to consider the possible distributions of weights in distinguishable sets which we optimise over in a separate argument. The main tools that we borrow are the Karush-Kuhn-Tucker (KKT) conditions along with Slater's constraint qualification. Below is a statement of the result we use, phrased to match our needs (see~\cite[p. 244]{KKT} for a detailed account).

\begin{theorem}[KKT + Slater's Condition]\label{KKT}
Let $f, g_1, \ldots, g_r: \mathbb{R}^m\to\mathbb{R}$ be convex, differentiable functions and let 
\[
S=\{x\in\mathbb{R}^m: g_i(x)\leq0 \ \text{for}\ i=1,\ldots, r\}.
\]
Suppose that there exists an $x_0\in\mathbb{R}^m$ such that $g_i(x_0)<0$ for $i=1,\ldots,r$. Then if $x^*\in S$ is such that
\[
f(x^*)=\sup_{x\in S}f(x),
\] 
then there exist $\lambda_1,\ldots,\lambda_r\in\mathbb{R}$ such that
\begin{enumerate}[(i)]
\item $\nabla f(x^*)=\sum_{i=1}^r\lambda_i\nabla g_i(x^*)$,
\item $\lambda_i\geq0$,  $\ i=1,\ldots,r$,
\item $\lambda_i g_i(x^*)=0$,  $\ i=1,\ldots,r$.
\end{enumerate}
\end{theorem}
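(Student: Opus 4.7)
The approach is to treat Proposition~\ref{convex} as a constrained maximisation problem and apply Theorem~\ref{KKT}. First I would introduce the shifted variable $u_\tau := x_\tau + d_\tau - \omega(\tau)/2$, which turns condition (1) into the spherical constraint
\[
\sum_{\tau \in \mathcal{D}} u_\tau^2 \;=\; R^2, \qquad R^2 := \sum_{\tau \in \mathcal{D}}\bigl(\omega(\tau)/2 - d_\tau\bigr)^2,
\]
and turns condition (2) into $u_\tau \leq 1/2$ for $\tau \in \mathcal{D}_1 := \{\tau \in \mathcal{D} : \omega(\tau)=1\}$. The desired inequality rewrites as $\sum_\tau u_\tau \leq (2^k - W)/2$, where $W := \sum_\tau \omega(\tau)$. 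Maximising the linear functional $\sum u_\tau$ on the convex region cut out by $g_0(u) := \sum u_\tau^2 - R^2 \leq 0$ and the affine cuts $g_\tau(u) := u_\tau - 1/2 \leq 0$ satisfies Slater's condition at $u = 0$, so Theorem~\ref{KKT} applies directly.

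The stationarity condition $\nabla f = \lambda_0 \nabla g_0 + \sum_\tau \lambda_\tau \nabla g_\tau$ together with complementary slackness produces a single parameter $\nu := 1/(2\lambda_0)$ such that $u_\tau = \nu$ for every $\tau \in \mathcal{D}_2 := \mathcal{D}\setminus\mathcal{D}_1$, while for $\tau \in \mathcal{D}_1$ either $u_\tau = \nu$ (inactive constraint, $\nu \leq 1/2$) or $u_\tau = 1/2$ (active constraint, $\nu \geq 1/2$). This splits the analysis into \emph{Case A} ($\nu \leq 1/2$, every $u_\tau = \nu = R/\sqrt{|\mathcal{D}|}$) and \emph{Case B} ($\nu > 1/2$, the weight-$1$ coordinates are clipped at $1/2$ and $\nu = \sqrt{A/|\mathcal{D}_2|}$ where $A := \sum_{\mathcal{D}_2}(\omega/2 - d)^2$).

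In Case A the bound reduces to $\sqrt{|\mathcal{D}|}\,R \leq (2^k - W)/2$; using $R \leq \sqrt{|\mathcal{D}|}/2$ from the defining condition $\nu \leq 1/2$, this further reduces to the linear bound $2|\mathcal{D}_1| + |\mathcal{D}_2| + W_2 \leq 2^k$ (with $W_2 := \sum_{\mathcal{D}_2}\omega$), which follows from Lemma~\ref{dist} combined with the elementary estimate $2^\omega \geq \omega + 1$ for $\omega \geq 1$. In Case B, after using $|\omega - 2d| \leq \omega$ and Lemma~\ref{dist} (which yields $2^k - 2|\mathcal{D}_1| - W_2 \geq \sum_{\mathcal{D}_2}(2^\omega - \omega)$), the bound reduces to the numerical inequality
\[
|\mathcal{D}_2|\sum_{\tau\in\mathcal{D}_2}\omega(\tau)^2 \;\leq\; \Bigl(\sum_{\tau\in\mathcal{D}_2}\bigl(2^{\omega(\tau)} - \omega(\tau)\bigr)\Bigr)^2.
\]
This follows by summing the pairwise estimate $2\bigl(2^{\omega_i} - \omega_i\bigr)\bigl(2^{\omega_j} - \omega_j\bigr) \geq \omega_i^2 + \omega_j^2$ over all ordered pairs in $\mathcal{D}_2 \times \mathcal{D}_2$; this pairwise estimate in turn follows by assuming $\omega_i \leq \omega_j$ and chaining $2(2^{\omega_i}-\omega_i)(2^{\omega_j}-\omega_j) \geq 4(2^{\omega_j}-\omega_j) \geq 2\omega_j^2 \geq \omega_i^2 + \omega_j^2$, where the middle step is the one-variable check $2^{\omega+1} \geq \omega(\omega+2)$ for $\omega \geq 2$.

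Finally, I would trace back each inequality to pin down equality. Case A equality forces $\mathcal{D}_2 = \emptyset$ (since $2^\omega > \omega + 1$ strictly for $\omega \geq 2$) and $\mathcal{D}$ a weight-$1$ decomposition with $x_\tau = 1$. Case B equality forces $\mathcal{D}$ to be a decomposition, every $\omega(\tau) \in \mathcal{D}_2$ to equal $2$ (from equality in the pairwise estimate), and $d_\tau \in \{0,\omega(\tau)\}$; computing $\nu = 1$ at such a point shows that $d_\tau = \omega(\tau)$ would force $x_\tau = 0$, violating the support hypothesis, so $\Delta = \{0\}$ and $x_\tau = 2$ on $\mathcal{D}_2$. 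In either regime the equality configuration is exactly an element of $O$. The principal technical obstacle is locating the correct pairwise numerical inequality for Case B: it is elementary once stated, but is the crux that makes the overall bound tight precisely on $O$, and several more naive routes (pure Cauchy--Schwarz on $\sum u_\tau$, or coordinatewise bounds) break down when weight-$1$ and weight-$\geq 2$ elements coexist in $\mathcal{D}$.
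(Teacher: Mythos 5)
There is a fundamental mismatch here: the statement you were asked to prove is Theorem~\ref{KKT} itself, i.e.\ the Karush--Kuhn--Tucker necessary conditions under Slater's constraint qualification, but your proposal never proves it. Instead you \emph{invoke} Theorem~\ref{KKT} as a black box ("Slater's condition \ldots so Theorem~\ref{KKT} applies directly") and sketch how to deduce Proposition~\ref{convex} from it. That is a different statement in the paper, and using the theorem under review as an ingredient makes the attempt circular as a proof of the theorem. A genuine proof of Theorem~\ref{KKT} would have to produce the multipliers $\lambda_1,\ldots,\lambda_r$ from first principles: for instance, show that Slater's condition forces strong duality (via a separating or supporting hyperplane argument applied to the convex set of achievable constraint--objective value pairs), extract a dual optimal vector $\lambda\geq0$, and then read off stationarity $\nabla f(x^*)=\sum_i\lambda_i\nabla g_i(x^*)$ and complementary slackness $\lambda_ig_i(x^*)=0$ from the saddle-point property of the Lagrangian; alternatively, one can argue via Fritz John conditions plus Slater's point $x_0$ ruling out a vanishing multiplier on $\nabla f$. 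None of these ideas appears in your write-up. For what it is worth, the paper itself does not prove Theorem~\ref{KKT} either --- it is quoted from a standard convex optimisation reference --- so the expected content of a proof is precisely this textbook argument, not an application.

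As a secondary remark, the material you did write is essentially a reorganised version of the paper's own route from Theorem~\ref{KKT} to Proposition~\ref{convex}: your shifted variables $u_\tau=x_\tau+d_\tau-\omega(\tau)/2$ and the clipped/unclipped case split reproduce the two regimes of Lemma~\ref{lag}, and your pairwise numerical inequality plays the role of Lemma~\ref{shift}. That analysis may well be salvageable as an alternative proof of Proposition~\ref{convex} (the equality analysis forcing $\Delta=\{0\}$ and $x\in O$ would need to be checked carefully, in particular that every boundary case of your Case~A/Case~B dichotomy is covered when weight-$1$ and higher-weight elements coexist), but it does not address the statement under review.
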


In view of the statement of Proposition~\ref{convex} it is natural to apply Theorem~\ref{KKT} to establish the following.

\begin{lemma}\label{lag}
Let $\alpha_1, \ldots, \alpha_m$ be integers where $\alpha_i=1$ for $i=1,\ldots,\ell$, ($0\leq\ell\leq m$) and let $x_1,\ldots,x_m$ be real numbers satisfying
\begin{enumerate}[(i)]
\item $
\sum_{i=1}^m(x_i^2-\alpha_ix_i)\leq0,
$
\item $x_i\leq1$ for $i=1,\ldots,\ell$.
\end{enumerate}
If $\sum_{i=1}^m \alpha_i^2>m$, then
$$
\sum_{i=1}^mx_i\leq\ell+\frac{1}{2}\left(\sum_{i=\ell+1}^m\alpha_i+\sqrt{(m-\ell)\sum_{i=\ell+1}^m\alpha_i^2}\right),
$$
with equality only if $x_i=1$ for $i\leq\ell$ and
$
x_i=\frac{1}{2}\left(\alpha_i+\sqrt{\frac{1}{(m-\ell)}\sum_{i=\ell+1}^m\alpha_i^2}\right)\ \text{for}\ i>\ell.
$\\

\noindent If instead $\sum_{i=1}^m \alpha_i^2\leq m$, then
$$
\sum_{i=1}^mx_i\leq\frac{1}{2}\left(\sum_{i=1}^m\alpha_i+\sqrt{m\sum_{i=1}^m\alpha_i^2}\right),
$$
with equality only if
$
x_i=\frac{1}{2}\left(\alpha_i+\sqrt{\frac{1}{m}\sum_{i=1}^m\alpha_i^2}\right)\ \text{for all}\ i.
$
\end{lemma}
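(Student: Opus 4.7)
The plan is to recognise the lemma as a standard convex optimisation problem and apply Theorem~\ref{KKT} directly. I would consider the task of maximising $f(x):=\sum_{i=1}^m x_i$ over the set cut out by the convex constraints $g_0(x):=\sum_{i=1}^m(x_i^2-\alpha_i x_i)\le0$ and $g_j(x):=x_j-1\le0$ for $j=1,\dots,\ell$. Convexity of $g_0$ is immediate, and completing the square shows that $g_0\le0$ describes a Euclidean ball, so the feasible set is compact and a maximiser $x^\ast$ exists. Slater's condition would follow by taking $x_i=1/2$ for $i\le\ell$ and $x_i=\alpha_i/2$ for $i>\ell$ (with a minor adjustment when some $\alpha_i=0$), after disposing of the trivial case in which the feasible set is a single point.

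Invoking Theorem~\ref{KKT} then produces multipliers $\lambda\ge0$ and $\mu_1,\dots,\mu_\ell\ge0$ satisfying the stationarity equations
\[
1=\lambda(2x^\ast_j-\alpha_j)+\mu_j\mathbf{1}[j\le\ell]\qquad(j=1,\dots,m),
\]
together with complementary slackness $\lambda g_0(x^\ast)=0$ and $\mu_j(x^\ast_j-1)=0$. For any $j>\ell$ the equation forces $\lambda>0$ and $x^\ast_j=\tfrac{1}{2\lambda}+\tfrac{\alpha_j}{2}$. For $j\le\ell$ complementary slackness bifurcates: either $\mu_j=0$ and $x^\ast_j=\tfrac{1}{2\lambda}+\tfrac{1}{2}$ (legal only if $\lambda\ge1$), or $x^\ast_j=1$ and $\mu_j=1-\lambda$ (legal only if $\lambda\le1$).

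This dichotomy corresponds exactly to the case split of the lemma. In the regime $\lambda\le1$ all upper bound constraints bind; substituting into $g_0(x^\ast)=0$ then yields $\lambda^{-2}=\bigl(\sum_{j>\ell}\alpha_j^2\bigr)/(m-\ell)$, and the inequality $\lambda\le1$ rearranges, using $\alpha_i^2=1$ for $i\le\ell$, to $\sum_i\alpha_i^2\ge m$; summing the coordinates of $x^\ast$ then produces the first claimed bound. In the regime $\lambda\ge1$ none of the upper bound constraints bind, every $x^\ast_j$ equals $\tfrac{1}{2\lambda}+\tfrac{\alpha_j}{2}$, the same substitution gives $\lambda^{-2}=\bigl(\sum_j\alpha_j^2\bigr)/m$ (equivalent to $\sum_i\alpha_i^2\le m$), and summation yields the second bound.

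The equality characterisation will drop out of uniqueness: strict convexity of $g_0$ together with the linearity of $f$ forces the KKT system to pin down $x^\ast$ explicitly in each case, so any $x$ attaining the displayed supremum must agree with the formulae asserted. The main obstacle I anticipate is largely organisational—verifying Slater's condition when a subset of the $\alpha_i$ vanish, handling the boundary case $\sum_i\alpha_i^2=m$ where the two regimes coincide (the two formulae agree there), and disposing of degenerate situations such as $m=\ell$—rather than any genuine analytic difficulty, since Theorem~\ref{KKT} does essentially all of the heavy lifting once the problem is phrased correctly.
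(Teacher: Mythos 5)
Your proposal mirrors the paper's proof: both set up the same maximisation of $f(x)=\sum_i x_i$ over $\{g_0\le0,\ g_j\le0\ (j\le\ell)\}$, verify Slater's condition with the centre of the ball, invoke Theorem~\ref{KKT}, and split on the multiplier of the quadratic constraint to recover the two bounds and the equality cases. The only cosmetic differences are that the paper treats $\Lambda=0$ as a separate third case (rather than folding it into $\lambda\le1$ as a degenerate subcase requiring $\ell=m$) and reads off the equality characterisation directly from the explicit KKT solution rather than appealing to uniqueness via strict convexity — both of which you correctly flag as organisational details to tidy.
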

\begin{proof}
Note first that if $\alpha_i=0$ for all $i$ (so in particular $\ell=0$) then inequality $(i)$ implies that $x_i=0$ for all $i$ in which case there's nothing to prove. Suppose then that this is not the case and define functions $f, g_1, \ldots, g_{\ell+1}: \mathbb{R}^m\to\mathbb{R}$ as follows. Let $f(x)=\sum_{i=1}^mx_i$, $g_i(x)=x_i-1$ for $i=1,\ldots,\ell$ and $g_{\ell+1}(x)=\sum_{i=1}^m(x_i^2-\alpha_ix_i)$. Note that the functions just defined are all convex and differentiable. Let $x_0=(\alpha_1/2,\ldots,\alpha_m/2)$, the centre of the spherical region described by (ii) and observe that $g_i(x_0)<0$ for $i=1,\ldots,\ell+1$.  Let $S=\{x\in\mathbb{R}^m: g_i(x)\leq0 \ \text{for}\ i=1,\ldots, \ell+1\}$. $S$ is compact and $f$ is continuous hence we may pick $x^*\in S$ such that 
$$
f(x^*)=\sup_{x\in S}f(x).
$$
Let $x^*=(x_1,\ldots,x_m)$. By Theorem~\ref{KKT}, there exist real numbers $\lambda_1,\ldots,\lambda_\ell$ and $\Lambda$ such that the following hold (for notational convenience we define $\lambda_j=0$ for $j>\ell$):
\begin{enumerate}
\item $\Lambda(2x_i-\alpha_i)+\lambda_i=1$ for all $i$,
\item $\Lambda\geq0$ and $\lambda_i\geq0$ for all $i$,
\item $\Lambda\left(\sum_{i=1}^m(x_i^2-\alpha_ix_i)\right)=0$ and $\lambda_i(x_i-1)=0$ for all $i$.
\end{enumerate}
We consider three cases depending on the value of $\Lambda$. First let us suppose that $\Lambda=0$. In this case, by (1) we must have $\lambda_i=1$ for all $i$. Recalling that $\lambda_j=0$ for $j>\ell$ by definition, we must also have $\ell=m$ and so $\alpha_i=1$ for all $i$. Moreover, it follows from (3) that $x_i=1$ for all $i$ and so we're done.

By (2), we may now assume that $\Lambda>0$ and so we may rewrite (1) as
\begin{equation}\label{xi}
x_i=\frac{1}{2}\left(\frac{1-\lambda_i}{\Lambda}+\alpha_i\right)\ \mbox{for all }i.
\end{equation}
Moreover, $\sum_{i=1}^m(x_i^2-\alpha_ix_i)=0$ by (3) which by \eqref{xi} gives
\begin{equation}\label{alpsq}
\frac{1}{\Lambda^2}\sum_{i=1}^m(1-\lambda_i)^2=\sum_{i=1}^m\alpha_i^2.
\end{equation}
Now, note that for $i\leq\ell$ we have $\alpha_i=1$ and $x_i\leq1$ and so by \eqref{xi} we have 
\begin{equation}\label{mu}
1-\Lambda\leq\lambda_i\ \mbox{for}\ i\leq\ell.
\end{equation}
If $\Lambda<1$ then by \eqref{mu} we have $\lambda_i>0$ for $i\leq\ell$ and so by (3), $x_i=1$ for $i\leq\ell$ and so in fact by \eqref{xi}
\begin{equation}\label{eq}
1-\Lambda=\lambda_i\ \mbox{for}\ i\leq\ell.
\end{equation}
Recalling that $\alpha_i=1$ for $i\leq\ell$ and $\lambda_i=0$ for $i>\ell$ by definition, \eqref{alpsq} then gives
\begin{equation}\label{eq:rms}
\frac{1}{\Lambda}=\sqrt{\frac{1}{m-\ell}\sum_{i=\ell+1}^m\alpha_i^2}.
\end{equation}
From \eqref{xi} it now follows that
\[
x_i=\frac{1}{2}\left(\alpha_i+\sqrt{\frac{1}{m-\ell}\sum_{i=\ell+1}^m\alpha_i^2}\right)\ \text{for}\ i>\ell,
\]
and so
\[
\sum_{i=1}^mx_i=\ell+\frac{1}{2}\left(\sum_{i=\ell+1}^m\alpha_i+\sqrt{(m-\ell)\sum_{i=\ell+1}^m\alpha_i^2}\right).
\]
Recalling that $\Lambda<1$, it follows from \eqref{eq:rms} that $\sum_{i=1}^m\alpha_i^2>m$.

It remains to consider the case where $\Lambda\geq1$. Recall that if $\lambda_i>0$ for some $i$ then $x_i=1$ by (3) and so $\Lambda=1-\lambda_i$ by \eqref{xi}. However this contradicts the assumption that $\Lambda\geq1$ and so we conclude that $\lambda_i=0$ for all $i$. It follows from \eqref{alpsq} that 
\begin{equation}\label{eq:rms2}
\frac{1}{\Lambda}=\sqrt{\frac{1}{m}\sum_{i=1}^m\alpha_i^2},
\end{equation}
so that by \eqref{xi},
\[
x_i=\frac{1}{2}\left(\alpha_i+\sqrt{\frac{1}{m}\sum_{i=1}^m\alpha_i^2}\right)\ \text{for all}\ i.
\]
The result follows, noting that by \eqref{eq:rms2} we have $\sum_{i=1}^m\alpha_i^2\leq m$ in this case. 
\end{proof}

We are almost ready to prove Proposition~\ref{convex}, but first we need the following inequality.

\begin{lemma}\label{shift}
Let $\alpha_{1},\ldots,\alpha_m$ be integers $\geq$ 2 then
$$
\sum_{i=1}^m\alpha_i+\sqrt{m\sum_{i=1}^m\alpha_i^2}\leq \sum_{i=1}^m2^{\alpha_i},
$$
and equality holds if only if $\alpha_i=2$ for all $i$.
\begin{proof}
Let $\alpha=(\alpha_1,\ldots, \alpha_m)$. We induct on the value of $S_\alpha:=\sum_{i=1}^m(2^{\alpha_i-2}-1)$. If $S_\alpha=0$, then $\alpha_i=2$ for all $i$, so that 
$$
\sum_{i=1}^m\alpha_i+\sqrt{m\sum_{i=1}^m\alpha_i^2}=4m=\sum_{i=1}^m2^{\alpha_i}.
$$
Suppose then that $S_\alpha>0$ so that $\alpha_j\geq3$ for some $j\in[m]$. Without loss of generality assume that $j=1$. Define a new sequence of integers $\alpha'=(\alpha_1',\ldots,\alpha_{m+1}')$, as follows: Let $\alpha_1'=\alpha_2'=\alpha_1-1$ and $\alpha_i'=\alpha_{i-1}$ for $i=3,4,\ldots,m+1$. Note that $\alpha_i'\geq2$ for all $i$ and $S_{\alpha'}=S_\alpha-1$ and so by the inductive hypothesis
\begin{equation}\label{ind}
\sum_{i=1}^{m+1}\alpha'_i+\sqrt{(m+1)\sum_{i=1}^{m+1}\alpha_i'^2}\leq \sum_{i=1}^{m+1}2^{\alpha'_i},
\end{equation}
  Note that 
\begin{equation}\label{1}
\sum_{i=1}^{m+1}2^{\alpha_i'}=\sum_{i=1}^m2^{\alpha_i}\ \text{and}\ \sum_{i=1}^{m+1}\alpha_i'-\sum_{i=1}^m\alpha_i=\alpha_1-2>0,
\end{equation}
and also
\begin{equation}\label{3}
(m+1)\sum_{i=1}^{m+1}\alpha_i'^2-m\sum_{i=1}^m\alpha_i^2=\sum_{i=1}^m\alpha_i^2+(m+1)(\alpha_1^2-4\alpha_1+2)\geq \sum_{i=1}^m\alpha_i^2-(m+1),
\end{equation}
where in the last inequality we used the fact that $\alpha^2-4\alpha+2\geq-1$ for $\alpha\geq3$. Note that since $\alpha_i\geq2$ for all $i$, we certainly have that $\sum_{i=1}^m\alpha_i^2> m+1$. It follows then from \eqref{3} that 
\begin{equation}\label{4}
(m+1)\sum_{i=1}^{m+1}\alpha_i'^2> m\sum_{i=1}^m\alpha_i^2.
\end{equation}
Combining \eqref{ind}, \eqref{1}, and \eqref{4} we have
$$
\sum_{i=1}^m\alpha_i+\sqrt{m\sum_{i=1}^m\alpha_i^2}<\sum_{i=1}^{m+1}\alpha'_i+\sqrt{(m+1)\sum_{i=1}^{m+1}\alpha_i'^2}\leq \sum_{i=1}^{m+1}2^{\alpha'_i}=\sum_{i=1}^m2^{\alpha_i}
$$
as required. Note the strict inequality, and so we only have equality in the case where $\alpha_i=2$ for all $i$.
\end{proof}
\end{lemma}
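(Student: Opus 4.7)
The plan is to prove the inequality by induction on the quantity $S_\alpha := \sum_{i=1}^m(2^{\alpha_i-2}-1)$, which is a nonnegative integer that vanishes precisely when all $\alpha_i=2$. This is natural because the equality case of the lemma is exactly $\alpha_i=2$ for all $i$, and the RHS $\sum 2^{\alpha_i}$ is preserved under the elementary move of replacing an $\alpha_j\geq3$ with two copies of $\alpha_j-1$.

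In the base case $S_\alpha=0$ we have $\alpha_i=2$ throughout, so both sides evaluate to $4m$, and the claim (with equality) is immediate. For the inductive step, assume $S_\alpha>0$ and pick $j$ with $\alpha_j\geq3$; up to relabelling say $j=1$. Define the new sequence $\alpha'=(\alpha_1',\ldots,\alpha_{m+1}')$ by $\alpha_1'=\alpha_2'=\alpha_1-1$ and $\alpha_i'=\alpha_{i-1}$ for $i\geq3$. All entries satisfy $\alpha_i'\geq2$, and a direct check gives $S_{\alpha'}=S_\alpha-1$, so the inductive hypothesis applies.

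The heart of the proof is to show that this replacement strictly increases the LHS while leaving the RHS untouched. The RHS invariance $\sum 2^{\alpha_i'}=\sum 2^{\alpha_i}$ is immediate from $2\cdot2^{\alpha_1-1}=2^{\alpha_1}$. For the linear term, $\sum\alpha_i'-\sum\alpha_i=\alpha_1-2>0$. The main obstacle, and the calculation I expect to be most delicate, is the quadratic term: one must verify
\[
(m+1)\sum_{i=1}^{m+1}\alpha_i'^2\ >\ m\sum_{i=1}^m\alpha_i^2.
\]
Expanding, this difference equals $\sum_{i=1}^m\alpha_i^2+(m+1)(\alpha_1^2-4\alpha_1+2)$. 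Since $(\alpha_1-2)^2\geq1$ for integers $\alpha_1\geq3$, we have $\alpha_1^2-4\alpha_1+2\geq-1$, so the difference is at least $\sum\alpha_i^2-(m+1)$. Because every $\alpha_i\geq2$ we have $\sum\alpha_i^2\geq4m>m+1$, giving strict positivity as required.

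Combining these three observations with the inductive hypothesis yields
\[
\sum_{i=1}^m\alpha_i+\sqrt{m\sum_{i=1}^m\alpha_i^2}\ <\ \sum_{i=1}^{m+1}\alpha_i'+\sqrt{(m+1)\sum_{i=1}^{m+1}\alpha_i'^2}\ \leq\ \sum_{i=1}^{m+1}2^{\alpha_i'}\ =\ \sum_{i=1}^m2^{\alpha_i},
\]
which is the desired inequality with \emph{strict} inequality whenever some $\alpha_i\geq3$. This simultaneously settles both the bound and the characterisation of the equality case.
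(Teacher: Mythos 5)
Your proof is correct and follows essentially the same route as the paper: induction on $S_\alpha=\sum_i(2^{\alpha_i-2}-1)$, splitting an entry $\alpha_1\geq3$ into two copies of $\alpha_1-1$, and checking that this move preserves the right-hand side while strictly increasing both the linear and quadratic parts of the left-hand side, which also yields the equality characterisation. Nothing to add.
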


\begin{proof}[Proof of Proposition~\ref{convex}]
Consider first the case where $\sum_{\tau\in\mathcal{D}}(\omega(\tau)-2d_\tau)^2>\abs{\mathcal{D}}$. Suppose that $\ell$ elements of $\mathcal{D}$ have weight 1 and let $\mathcal{D'}=\{\tau\in\mathcal{D}: \omega(\tau)\geq2\}$. Note that by Lemma~\ref{dist} we have $\sum_{\tau\in\mathcal{D}}2^{\omega(\tau)}\leq2^k$ and hence 
\begin{equation}\label{newdist}
\sum_{\tau\in\mathcal{D}'}2^{\omega(\tau)}\leq2^k-2\ell.
\end{equation}
Applying Lemma~\ref{lag}, recalling that $0\leq d_\tau\leq w(\tau)$ for all $\tau\in\mathcal{D}$, that $d_\tau=0$ whenever $\omega(\tau)=1$ and using \eqref{newdist} and Lemma~\ref{shift} we have
\begin{align}
\sum_{\tau\in\mathcal{D}}x_\tau&\leq\ell+\frac{1}{2}\left(\sum_{\tau\in\mathcal{D}'}(\omega(\tau)-2d_\tau)+\sqrt{\abs{\mathcal{D}'}\sum_{\tau\in\mathcal{D}'}(\omega(\tau)-2d_\tau)^2}\right)\label{b1}\\
&\leq\ell+\frac{1}{2}\left(\sum_{\tau\in\mathcal{D}'}\omega(\tau)+\sqrt{\abs{\mathcal{D}'}\sum_{\tau\in\mathcal{D}'}\omega(\tau)^2}\right)-\sum_{\tau\in\mathcal{D}}d_\tau\label{b2}\\
&\leq\ell+\sum_{\tau\in\mathcal{D}'}2^{\omega(\tau)-1}-\sum_{\tau\in\mathcal{D}}d_\tau\label{b3}\\
&\leq2^{k-1}-\sum_{\tau\in\mathcal{D}}d_\tau.\label{b4}
\end{align}
We analyse the conditions for equality to hold. For equality to hold in \eqref{b2} it must be the case that for all $\tau\in \mathcal{D}$, either $d_\tau=0$ or $d_\tau=\omega(\tau)$. By Lemma~\ref{shift}, for equality to hold in \eqref{b3} it must be the case that $\omega(\tau)=2$ for all $\tau\in\mathcal{D}'$. It now follows from Lemma~\ref{lag} that for equality to also hold in $\eqref{b1}$, we must have $x_\tau=1$ whenever $\omega(\tau)=1$, $x_\tau=2$ for all $\tau\in\mathcal{D}'$ such that $d_\tau=0$ and $x_\tau=0$ for all $\tau\in\mathcal{D}'$ such that $d_\tau=\omega(\tau)$. However, since each $x_\tau$ is non-zero by assumption we conclude that $d_\tau=0$ for all $\tau\in \mathcal{D}$ i.e. $\Delta=\{0\}$. Finally, for equality to hold in \eqref{b4} we must have equality in   \eqref{newdist} and so $\mathcal{D}$ is a decomposition by Lemma~\ref{dist}. It follows that $x\in O$.

It remains to consider the case where $\sum_{\tau\in\mathcal{D}}(\omega(\tau)-2d_\tau)^2\leq\abs{\mathcal{D}}$. By Lemma~\ref{lag} and Lemma~\ref{dist} we then have 
\begin{align}
\sum_{\tau\in\mathcal{D}}x_\tau&\leq\frac{1}{2}\left(\abs{\mathcal{D}}+\sum_{\tau\in\mathcal{D}}\omega(\tau)\right)-\sum_{\tau\in\mathcal{D}}d_\tau\label{a1}\\
&\leq\frac{1}{2}\left(\abs{\mathcal{D}}+\sum_{\tau\in\mathcal{D}}2^{\omega(\tau)-1}\right)-\sum_{\tau\in\mathcal{D}}d_\tau\label{a2}\\
&\leq2^{k-1}-\sum_{\tau\in\mathcal{D}}d_\tau.\label{a3}
\end{align}
For equality to hold in \eqref{a3}, we must have that $\abs{\mathcal{D}}=2^{k-1}$ and so $\mathcal{D}$ is a decomposition consisting only of elements of weight 1. It follows that $d_\tau=0$ and $x_\tau\leq1$ for all $\tau\in\mathcal{D}$. If equality holds throughout the above, we then have that $x_\tau=1$ for all $\tau\in\mathcal{D}$ and so $x\in O$.

\end{proof}

\section{Analytic and Combinatorial Stability}\label{sec:stab}
In this section we prove Proposition~\ref{opt} thus concluding our proof of Proposition~\ref{props2}. Note that Proposition~\ref{props2} classifies the optimal points of $X$. We use compactness arguments to prove a result to the effect that `almost optimal' points of $X$ must be close (in $\ell_1$ norm) to a genuine optimal point of $X$. Furthermore, compactness allows us to derive similar properties for $X(\gamma)$ when $\gamma$ is small. We then investigate what implications this has in our original combinatorial setting and complete the proof of Theorem~\ref{reduced}.

\begin{proof}[Proof of Proposition~\ref{opt}]

Let $H$ be the matrix with rows and columns indexed by $\{0,1,\ast\}^k$ where
 \[H_{\sigma \tau} 
    =
  \begin{cases}1 & \text{if $\sigma, \tau$ are indistinguishable,}\\
  0 & \text{if $\sigma, \tau$ are distinguishable.}
\end{cases}
 \]
Note that in particular, all diagonal entries of $H$ are equal to 1. Let $w=(-\omega(\tau): \tau\in\{0,1,\ast\}^k)\in\mathbb{R}^{\ast}$, then for $x\in\mathbb{R}^{\ast}$ we may write
\begin{equation}\label{altF}
F(x)=w^{T}x+x^THx.
\end{equation}

Suppose now that $x\in X$ is an optimal point. By the proof of Lemma~\ref{exist}, there is a finite sequence $x=x_0, x_1, \ldots, x_m$ of distinct optimal points of $X$ where $x_m$ is compressed, and for $i=0,\ldots, m-1$, $x_{i+1}=x_i(\pi_i, \rho_i)$ for some indistinguishable pair $\pi_i, \rho_i\in supp(x_i)$. Moreover we know that $\omega(\rho_i)\geq2$ and that $\pi_i\notin supp(x_{i+1})$ for all $i$. 

Suppose that $x$ is not compressed so that $m\geq1$. Let $y=x_{m-1}$, $z=x_m$ and let $\pi=\pi_{m-1}$, $\rho=\rho_{m-1}$. Since $z=y(\pi, \rho)$, it follows from the definition of compression that $z=y+ \alpha(e_\rho-e_\pi)$ for some $\alpha>0$. Let $p=e_\pi-e_\rho$. It follows, by the Taylor expansion of $F$, that
\begin{equation}\label{taylor}
F(y)=F(z+\alpha p)=F(z)+\alpha p^T\nabla F(z)+\alpha^2p^THp.
\end{equation}
Recall that $F(y)=F(z)=0$ by Lemma~\ref{zero}. Furthermore by direct calculation we also have $p^THp=0$. It follows from \eqref{taylor} that $p^T\nabla F(z)=0$ i.e.
\begin{equation}\label{equal}
\frac{\partial F}{\partial x_{\rlap{$\scriptstyle \pi$}}}(z)=\frac{\partial F}{\partial x_{\rlap{$\scriptstyle \rho$}}}(z).
\end{equation}
Let $I_\rho$ be the set of elements of $\{0,1,\ast\}^k$ that are indistinguishable from $\rho$ excluding $\rho$ itself. Define $I_\pi$ similarly. From the definition of $F$ we have 
\begin{equation}\label{partial}
\frac{\partial F}{\partial x_{\rlap{$\scriptstyle \rho$}}}(z)=2z_\rho+2\sum_{\tau\in I_\rho}z_\tau - \omega(\rho).
\end{equation}
Since $z$ is a compressed optimal point we have $z\in O$ by Corollary~\ref{Ocompress}. Since $\omega(\rho)\geq2$ and $\rho\in supp(z)$ we conclude that in fact $\omega(\rho)=2$ and so $z_\rho=2$. Moreover since $supp(z)$ is a distinguishable set we conclude that $z_\tau=0$ for all $\tau\in I_\rho$. It follows from \eqref{partial} that $\frac{\partial F}{\partial x_{\rlap{$\scriptstyle \rho$}}}(z)=2$ and hence from \eqref{equal} that
\begin{equation}\label{partial2}
\frac{\partial F}{\partial x_{\rlap{$\scriptstyle \pi$}}}(z)=2z_\pi+2\sum_{\tau\in I_\pi}z_\tau - \omega(\pi)=2.
\end{equation}
Since $z\in O$ we know that for all $\tau\in supp(z)$, $\omega(\tau)=1$ or $2$ and $z_\tau=\omega(\tau)$ . Let $w_1, w_2$ be the number of elements of $I_\pi\cap supp(z)$ with weights $1$, $2$ respectively. Since $\pi\notin supp(z)$, we can then infer from \eqref{partial2} that
\begin{equation}\label{w}
2w_1+4w_2-\omega(\pi)=2.
\end{equation}
We also know that $supp(z)$ is a decomposition and so
\begin{equation}\label{subset}
Q(\pi)=\bigcup_{\tau\in supp(z)}\left(Q(\tau)\cap Q(\pi)\right)=\bigcup_{\tau\in I_\pi\cap supp(z)}\left(Q(\tau)\cap Q(\pi)\right)\subseteq\bigcup_{\tau\in I_\pi\cap supp(z)}Q(\tau).
\end{equation}
The second equality comes from the fact that $Q(\tau)\cap Q(\pi)=\emptyset$ whenever $\tau$ and $\pi$ are distinguishable. Comparing the cardinality of the sets in \eqref{subset} yields
\begin{equation}\label{sumset}
2^{\omega(\pi)}\leq\ \sum_{\tau\in I_\pi\cap supp(z)}2^{\omega(\tau)}=2w_1+4w_2.
\end{equation}
Note also that $\rho\in I_\pi\cap supp(z)$ and $\omega(\rho)=2$ so that $w_2\geq1$. Using \eqref{w}, this last observation implies that $\omega(\pi)\geq2$ whereas combining \eqref{w} and \eqref{sumset} we have
\begin{equation}\label{through}
2^{\omega(\pi)}-\omega(\pi)\leq2w_1+4w_2-\omega(\pi)=2
\end{equation}
We deduce that $\omega(\pi)=2$ and so we have equality throughout \eqref{through}, in particular we have equality in \eqref{sumset} and so also in \eqref{subset}. Note that $\abs{Q(\pi)}=\abs{Q(\rho)}$ since $\omega(\pi)=\omega(\rho)=2$. Recalling that $\rho\in I_\pi\cap supp(z)$ equality in \eqref{subset} would therefore imply that $Q(\pi)=Q(\rho)$ i.e. $\pi=\rho$. This is a contradiction and so $x$ must be compressed. \end{proof}

Proposition~\ref{props2} has the following corollary that says an almost optimal point of $X$ must be close in norm to an actual optimal point of $X$.
\begin{proposition}\label{close2}
Let $\eta\ll\eps$. If $x\in X$ satisfies $\norm{x}>2^{k-1}-\eta$, then there exists an $x^\ast\in O$ such that
$
\norm{x-x^\ast}<\eps.
$
\end{proposition}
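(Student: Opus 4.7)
The plan is to prove this by a standard compactness argument, exploiting the full strength of Proposition~\ref{props2}, which identifies $O$ as precisely the set of points of maximal $\ell_1$-norm in the compact set $X$.

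Specifically, I would argue by contradiction. Suppose the conclusion fails; then there exists some $\eps_0>0$ and a sequence $(x^{(n)})_{n\geq 1}$ in $X$ with $\norm{x^{(n)}}>2^{k-1}-1/n$ but $\norm{x^{(n)}-y}\geq\eps_0$ for every $y\in O$. Since $X$ is a closed and bounded subset of the finite-dimensional space $\mathbb{R}^\ast$, it is compact, so after passing to a subsequence we may assume $x^{(n)}\to x^\ast$ for some $x^\ast\in X$. By continuity of the $\ell_1$-norm, $\norm{x^\ast}=\lim_n\norm{x^{(n)}}\geq 2^{k-1}$, and by Proposition~\ref{props2} the reverse inequality holds, so $\norm{x^\ast}=2^{k-1}$ and $x^\ast\in O$. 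But then $\norm{x^{(n)}-x^\ast}\to 0$, contradicting the uniform lower bound $\norm{x^{(n)}-y}\geq\eps_0$ applied to $y=x^\ast\in O$.

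The only step that requires any care is the translation of this qualitative statement into the quantitative form stated in the proposition, namely that for the specific $\eps>0$ one should take $\eta=\eta(\eps)>0$ sufficiently small. This is immediate from the contrapositive of the compactness argument above: the function $d(x)=\inf_{y\in O}\norm{x-y}$ is continuous on $X$, and the set $K_\eps=\{x\in X:d(x)\geq\eps\}$ is a closed (hence compact) subset of $X$ disjoint from $O$. By Proposition~\ref{props2}, $\norm{\cdot}$ attains its maximum on $K_\eps$ at some value strictly less than $2^{k-1}$, so we may define $\eta(\eps):=2^{k-1}-\max_{x\in K_\eps}\norm{x}>0$, which is precisely the implicit function asserted by the notation $\eta\ll\eps$.

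I do not anticipate any real obstacle here, since Proposition~\ref{props2} has already done all the heavy lifting; the argument is purely topological and uses nothing beyond compactness of $X$ and continuity of the norm. The only slight subtlety is that one must remember to invoke compactness in the finite-dimensional space $\mathbb{R}^\ast=\mathbb{R}^{3^k}$ (with $k$ fixed throughout the paper), which is why the proposition is formulated with $k$ suppressed into the $\ll$-notation.
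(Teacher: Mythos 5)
Your proof is correct and, once you pass to the contrapositive formulation in your second paragraph, it coincides with the paper's argument: the paper defines $\tilde{X}:=X\setminus\bigcup_{x^\ast\in O}B_\eps(x^\ast)$ (your $K_\eps$), notes it is compact, and takes $\eta=2^{k-1}-\sup_{z\in\tilde X}\norm{z}>0$ by Proposition~\ref{props2}. The sequential-compactness version you give first is a harmless reformulation of the same idea.
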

\begin{proof}
Consider the set
\[
\tilde{X}:=X\big\backslash\bigcup_{x^\ast\in O}B_\eps(x^\ast).
\]
$\tilde{X}$ is compact and so $\sup_{z\in\tilde{X}}\norm{z}=\norm{\tilde{x}}$ for some $\tilde{x}\in\tilde{X}$. By the definition of $\tilde{X}$, $\tilde{x}\notin O$ and so by Proposition~\ref{props2}, $\norm{\tilde{x}}=2^{k-1}-\eta$ for some $\eta>0$. It follows that if $x\in X$ satisfies $\norm{x}>2^{k-1}-\eta$ then $x\notin\tilde{X}$ and so $x\in B_\eps(x^\ast)$ for some $x^\ast\in O$.
\end{proof}

The following lemma allows us to relate properties of $X$ and $X(\gamma)$ for $\gamma$ small.

\begin{lemma}\label{compact}
Let $\gamma\ll\eta$. If $x\in X(\gamma)$, then there exists $x_0\in X$ for which $\norm{x-x_0}<\eta$.
\end{lemma}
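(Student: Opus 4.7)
The plan is to prove Lemma~\ref{compact} by a standard compactness argument of the form ``closed relaxations of a closed set collapse to it.'' I would argue by contradiction: suppose the conclusion fails, so there exists $\eta>0$ together with sequences $\gamma_n\to0$ and $x^{(n)}\in X(\gamma_n)$ such that $\|x^{(n)}-x_0\|\ge\eta$ for every $x_0\in X$, and derive a contradiction by extracting a convergent subsequence whose limit lies in $X$.

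The first step is to show that $X(\gamma)$ is uniformly bounded for $\gamma\le 1$. Expanding $(\sum_\tau x_\tau)^2$ and splitting unordered pairs $\{\sigma,\tau\}$ with $\sigma\ne\tau$ into distinguishable and indistinguishable ones, constraint (X1) rewrites as
\[
\sum_\tau x_\tau^2 \;+\; 2\!\!\sum_{\substack{\{\sigma,\tau\}:\sigma\ne\tau\\ \text{indistinguishable}}}\!\! x_\sigma x_\tau \;\le\; \sum_\tau \omega(\tau)\,x_\tau + \gamma.
\]
Using (X4) and $\omega(\tau)\le k$, this gives $\sum_\tau x_\tau^2 \le k\|x\|+\gamma$. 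Since $|\operatorname{supp}(x)|\le 3^k$, Cauchy--Schwarz yields $\|x\|^2\le 3^k(k\|x\|+\gamma)$, and hence a uniform bound $\|x\|\le M$ for all $x\in X(\gamma)$ with $\gamma\le 1$, where $M$ depends only on $k$.

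With this bound in hand, $(x^{(n)})$ lies in a bounded subset of the finite-dimensional space $\mathbb{R}^\ast$, so by Bolzano--Weierstrass some subsequence converges to a point $x^\ast\in\mathbb{R}^\ast$. I would then verify that $x^\ast\in X=X(0)$ by passing to the limit in each of the polynomial inequalities defining $X(\gamma_n)$: (X1) gives $F(x^\ast)\le\lim\gamma_n=0$ by continuity of $F$; (X2) gives $x^\ast_\tau\le 1$ whenever $\omega(\tau)=1$; (X3) combined with (X4) gives $x^\ast_\sigma x^\ast_\tau=0$ for every incompatible pair; and (X4) trivially passes to the limit. Hence $x^\ast\in X$, but then $x^{(n)}\to x^\ast$ along the subsequence contradicts $\|x^{(n)}-x_0\|\ge\eta$ for every $x_0\in X$.

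I do not anticipate any serious obstacle. The only mildly non-trivial ingredient is uniform boundedness of $X(\gamma)$, which as above follows in a few lines from (X1) and (X4); everything else is the routine observation that a decreasing family of closed sets defined by continuous inequalities with parameter $\gamma\to 0$ converges in Hausdorff distance to the limiting set $X$.
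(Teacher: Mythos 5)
Your argument is correct and is, in essence, the same compactness argument the paper uses: the paper fixes a decreasing sequence $\gamma_i\to 0$, observes that $(X(\gamma_i)\setminus U)_i$ is a decreasing sequence of compact sets with empty intersection (where $U$ is the open $\eta$-neighbourhood of $X$), and invokes Cantor's Intersection Theorem; you instead run the contrapositive by extracting a convergent subsequence via Bolzano--Weierstrass and checking the limit lies in $X$. These are two phrasings of the same finite-dimensional compactness fact. One small improvement in your version worth flagging: the paper silently asserts that the $X(\gamma_i)$ are compact, whereas you actually supply the (short, necessary) boundedness check via constraint (X1), non-negativity, and Cauchy--Schwarz. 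That makes your write-up a little more self-contained than the paper's.
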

\begin{proof}
Let $(\gamma_i)_{i\in\mathbb{N}}$ be a strictly decreasing sequence tending to 0, and let $X_i=X(\gamma_i)$ for $i\in \mathbb{N}$. Then $X_1, X_2, \ldots$ is a decreasing sequence of compact sets i.e. $X_{i+1}\subseteq X_i$ for $i\in \mathbb{N}$. Consider the set
$$
U=\bigcup_{z\in X}B_\eta(z),
$$
an open set containing $X$. Note that $(X_i\backslash U)_{i\in\mathbb{N}}$ is also a decreasing sequence of compact sets and also
$$
\bigcap_{i=1}^{\infty}(X_i\backslash U)=\left(\bigcap_{i=1}^{\infty}X_i\right)\Big\backslash U=X\backslash U=\emptyset.
$$
By Cantor's Intersection Theorem (see~\cite[Theorem 2.36, p.38]{rudin}) it follows that $X_m\backslash U=\emptyset$ for some $m\in\mathbb{N}$. In other words, if $x\in X(\gamma_m)$ then $x\in U$ so that $x\in B_\eta(x_0)$ for some $x_0\in X$. The result follows by taking $\gamma\leq\gamma_m$. 
\end{proof}

\begin{corollary}\label{close1}
Let $\gamma\ll\eps$. If $x\in X(\gamma)$ satisfies $\|x\|= 2^{k-1}$, then there exists an $x^\ast\in O$ such that $\|x-x^\ast\|<\varepsilon$.
\end{corollary}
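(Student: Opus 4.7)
The plan is a straightforward two-step chaining of the two previous results (Proposition~\ref{close2} and Lemma~\ref{compact}), controlled by the usual hierarchy-of-constants device. Given the target accuracy $\eps > 0$, first apply Proposition~\ref{close2} with accuracy $\eps/2$ to obtain some $\eta_0 = \eta_0(\eps) > 0$ such that any $y \in X$ with $\|y\| > 2^{k-1} - \eta_0$ lies within $\eps/2$ of some $y^\ast \in O$. Then set $\eta := \min(\eta_0, \eps/2)$ and apply Lemma~\ref{compact} with this $\eta$ to obtain some $\gamma_0 = \gamma_0(\eta) > 0$ such that every element of $X(\gamma_0)$ lies within $\eta$ of some element of $X$. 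Assuming $\gamma \le \gamma_0$, this is the implicit function in the statement $\gamma \ll \eps$.

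Now suppose $x \in X(\gamma)$ with $\|x\| = 2^{k-1}$. By Lemma~\ref{compact}, there is $x_0 \in X$ with $\|x - x_0\| < \eta$. By the reverse triangle inequality,
\[
\|x_0\| \ge \|x\| - \|x - x_0\| > 2^{k-1} - \eta \ge 2^{k-1} - \eta_0.
\]
By the choice of $\eta_0$ via Proposition~\ref{close2}, there exists $x^\ast \in O$ with $\|x_0 - x^\ast\| < \eps/2$. The triangle inequality then gives
\[
\|x - x^\ast\| \le \|x - x_0\| + \|x_0 - x^\ast\| < \eta + \eps/2 \le \eps,
\]
which completes the proof.

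There is essentially no obstacle here, since both ingredients are already in place; the only thing to be careful about is the order of quantifiers hidden in the $\ll$ notation. One must choose $\eta$ in terms of $\eps$ \emph{before} choosing $\gamma$ in terms of $\eta$, so that the implied function giving the allowed range of $\gamma$ is a function of $\eps$ alone. No compactness or optimisation work is needed beyond what has already been done in the previous two sections.
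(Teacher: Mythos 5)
Your proof is correct and is essentially identical to the paper's own: the same choice $\eta=\min\{\eta_{\ref{close2}}(\eps/2),\eps/2\}$ followed by $\gamma\leq\gamma_{\ref{compact}}(\eta)$, Lemma~\ref{compact} to get $x_0\in X$ near $x$, the reverse triangle inequality to show $\|x_0\|>2^{k-1}-\eta$, Proposition~\ref{close2} to get $x^\ast\in O$ near $x_0$, and the triangle inequality to finish. Your remark about fixing the order of quantifiers in the $\ll$ hierarchy is exactly the point of the paper's phrasing.
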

\begin{proof}
Given $\eps>0$, let $\eta=\min\{\eta_{\ref{close2}}(\eps/2), \eps/2\}$ and suppose that $\gamma\leq\gamma_{\ref{compact}}(\eta)$. Suppose that $x\in X(\gamma)$ satisfies $\norm{x}=2^{k-1}$. By Lemma~\ref{compact} there exists an $x_0\in X$ such that $\norm{x_0-x}<\eta$ and so $\norm{x_0}>\norm{x}-\eta=2^{k-1}-\eta$. It follows from Proposition~\ref{close2} that there exists an $x^\ast\in O$ such that $\norm{x_0-x^\ast}<\eps/2$ and so 
$$
\norm{x-x^\ast}\leq\norm{x-x_0}+\norm{x_0-x^\ast}<\eta+\eps/2\leq \eps.
$$
\end{proof}
Let
$$
O^\ast=\{x\in O: \omega(\tau)=1\ \text{for all}\ \tau\in supp(x)\}.
$$
In words, $O^\ast$ is the set of all elements $x\in \mathbb{R}^{\ast}$ such that $x$ is supported on a perfect matching of $Q_k$ and all non-zero entries of $x$ are equal to 1. We can also view $O^{\ast}$ as the set of profiles of hypercube colourings normalised by clique size. Our aim is to use the stability-type statement of Corollary~\ref{close1} to prove Theorem~\ref{reduced} in the following form. 

\begin{theorem}\label{red}
Let $\frac{1}{n}\ll\del\ll\eps\ll1$. If $G$ is a $(1-\del)$-dense, $k$-coloured graph with $v(G)=2^{k-1}n$, containing no monochromatic odd connected matching of order $\geq(1+\delta)n$, then for any choice of profile $x(G)$ of $G$, there exists some $x^\ast\in O^\ast$ such that 
$$
\|x(G)/n-x^\ast\|<\varepsilon.
$$  
\end{theorem}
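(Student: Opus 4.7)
The strategy is to first convert the combinatorial hypotheses on $G$ into the analytic language of $X(\gamma)$ via Proposition~\ref{constraints}, then apply the compactness results of Section~\ref{sec:stab} to conclude closeness to the extremal set $O$, and finally upgrade that conclusion from $O$ to its subset $O^\ast$ using a graph-theoretic contradiction argument.

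Concretely, taking $C=2^{k-1}$ in Proposition~\ref{constraints}, the three listed conditions on $v:=x(G)/n$ imply conditions (X1)--(X3) in the definition of $X(\gamma)$ with $\gamma:=\sqrt{\delta}k2^{2k}$, and (X4) is automatic since each $v_\tau\ge 0$; hence $v\in X(\gamma)$. Meanwhile $\|v\|=v(G)/n=2^{k-1}$. Choosing $\delta$ small enough that $\gamma\le\gamma_{\ref{close1}}(\eps/2)$, Corollary~\ref{close1} produces some $x^\ast\in O$ with $\|v-x^\ast\|<\eps/2$. If $x^\ast\in O^\ast$ we are done.

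It remains to rule out $x^\ast\in O\setminus O^\ast$. In that case property~(O2) gives some $\tau\in\mathrm{supp}(x^\ast)$ with $\omega(\tau)=2$ and $x^\ast_\tau=2$, whence $\bigl||V_\tau|-2n\bigr|<\tfrac{1}{2}\eps n$. Writing $\{i,j\}=\{\ell\in[k]:\tau_\ell=\ast\}$, Observations~\ref{obs1} and~\ref{obs2} show that every edge of $G[V_\tau]$ is non-bipartite and coloured $i$ or $j$, while the containment $V_\tau\subseteq V_\ast^i\cap V_\ast^j$ forces each vertex of $V_\tau$ to lie in a non-bipartite component of both $G_i$ and $G_j$. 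The aim is to extract from this dense, $2$-coloured, ``doubly non-bipartite'' induced subgraph a monochromatic odd connected matching of order $\ge(1+\delta)n$, contradicting the main hypothesis on $G$.

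The main obstacle is precisely this final extraction. The bound $R_2(C_n)=2n-1$ only yields a monochromatic $C_n$, giving an odd connected matching of order $n-1<(1+\delta)n$, which is too weak. One must combine the small slack in $|V_\tau|\approx 2n$, the $(1-\delta)$-density of $G$, and the non-bipartite containment (which prevents either colour class from being nearly bipartite on $V_\tau$) with a stability-flavoured analysis of $2$-colourings in the spirit of~\cite{Exact}, carefully bookkeeping the various $O(\delta)n$ error terms to close the gap. This is the technical heart of the proof, required to rule out precisely the face-decomposition points of $O\setminus O^\ast$ that the analytic machinery of Sections~\ref{sec:compress}--\ref{sec:stab} alone cannot exclude.
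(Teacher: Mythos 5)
Your proposal follows the paper's structure exactly up to and including the reduction to ruling out $x^\ast\in O\setminus O^\ast$, and your identification of the ``bad'' coordinate $\tau$ with $\omega(\tau)=2$, $x^\ast_\tau\approx2$ is correct. But the final step, which you flag as ``the technical heart'' and leave unresolved, is where your approach goes astray, and it can actually be dispatched far more cleanly than you anticipate.

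You are trying to extract a monochromatic \emph{odd} connected matching from $G[V_\tau]$ alone, which leads you to contemplate a bespoke stability analysis for $2$-colourings under a ``doubly non-bipartite'' hypothesis. That is not the right lens, for two reasons. First, the connected matching to be found in $H=G[V_\tau]$ does \emph{not} need to be odd as a subgraph of $H$: the odd-ness is inherited for free from $G$, because $V_\tau\subseteq V_\ast^i\cap V_\ast^j$, so every monochromatic component of $H$ (in colour $i$ or $j$) lies inside a non-bipartite monochromatic component of $G$. Hence any monochromatic connected matching found in $H$ is automatically an odd connected matching of $G$. Second, the relevant $2$-colour input is not $R_2(C_n)=2n-1$ but a connected-matching analogue of the \emph{even}-cycle Ramsey number $\tfrac{3n}{2}-1$: the paper invokes Lemma~\ref{2stab} (from~\cite{BenLuc}), which says that any $(1-\delta')$-dense $2$-coloured graph on $\geq(\tfrac{3}{2}+\eps)n$ vertices contains a monochromatic connected matching of order $\geq(1+\delta')n$. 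Since $|V_\tau|>(2-\eps)n\geq(\tfrac{3}{2}+\eps)n$, this applies with ample room to spare; there is no delicate bookkeeping of $O(\delta)n$ errors against a tight threshold. Once you replace your attempted $R_2(C_n)$ argument with Lemma~\ref{2stab} plus the observation that odd-ness is inherited from the containment $V_\tau\subseteq V_\ast^i\cap V_\ast^j$, your proof closes and coincides with the paper's.
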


First we need the following two colour Ramsey type result which is a direct consequence of the more general Theorem $1.8$ in~\cite{BenLuc}.

\begin{lemma}\label{2stab}
 Let $\frac{1}{n}\ll\del\ll\eps$. If $H$ is a $(1-\del)$-dense, 2-coloured graph with $v(H)\geq(\frac{3}{2}+\eps)n$, then $H$ contains a monochromatic connected matching of order $\geq(1+\del)n$.\qed
\end{lemma}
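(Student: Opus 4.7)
The plan is to deduce this lemma directly from Theorem~1.8 of Benevides and {\L}uczak~\cite{BenLuc}, which provides a quantitative stability version of the classical two-colour Ramsey result $R_2(C_{2m})=3m-1$, phrased in the connected-matching language tailored to the regularity method. The threshold $\tfrac{3}{2}n$ appearing in our statement matches precisely the extremal threshold for the even-cycle Ramsey problem, whose extremal configuration is the appropriate complete bipartite graph; a connected matching of order roughly $2n$ inside one colour class is exactly what a long even monochromatic cycle provides.

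First I would reduce from the $(1-\delta)$-dense hypothesis to an almost-complete graph of large minimum degree. Write $N=v(H)\geq(\tfrac{3}{2}+\epsilon)n$. Since at most $\delta\binom{N}{2}$ edges of $H$ are missing, fewer than $\sqrt{\delta}\,N$ vertices can have degree below $(1-\sqrt{\delta})N$. Iteratively deleting such vertices yields an induced subgraph $H'$ on $N'\geq(1-\sqrt{\delta})N\geq(\tfrac{3}{2}+\tfrac{\epsilon}{2})n$ vertices whose minimum degree is at least $(1-2\sqrt{\delta})N'$, with the $2$-colouring of $H$ restricting naturally to $H'$.

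Then I would apply~\cite[Theorem~1.8]{BenLuc} to $H'$. That theorem asserts that in any $2$-colouring of a sufficiently dense graph on $(\tfrac{3}{2}+\epsilon')n$ vertices there is a monochromatic connected matching of order at least $(1+\delta)n$, provided $\tfrac{1}{n}\ll\delta\ll\epsilon'$. Taking $\epsilon'=\epsilon/2$ and using $\delta\ll\epsilon$ ensures the parameter hierarchy matches. Any connected matching produced inside $H'$ is in particular a connected matching inside $H$, completing the argument.

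The main obstacle here is essentially bookkeeping: one must verify that the density deficit after vertex deletion, which grows only like $\sqrt{\delta}$, remains within the tolerance demanded by the Benevides--{\L}uczak theorem, which is immediate from $\delta\ll\epsilon$. The genuinely delicate combinatorial content --- forcing the monochromatic matching to live inside a \emph{single} connected component of one colour class, rather than being a mere union of vertex-disjoint monochromatic edges --- is handled inside~\cite{BenLuc} via a case analysis around the extremal complete bipartite construction, and there is no need to reproduce it here.
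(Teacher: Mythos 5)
Your argument takes essentially the same route as the paper, which states the lemma with no proof beyond the remark that it is a direct consequence of Theorem~1.8 of Benevides and {\L}uczak. The vertex-deletion reduction you add is harmless but likely unnecessary, since the cited theorem is already phrased for $(1-c)$-dense $2$-coloured graphs rather than complete ones, so the lemma really is an immediate corollary.
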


\begin{proof}[Proof of Theorem~\ref{red}]
Given $\eps>0$, let $\gamma=\gamma_{\ref{close1}}(\eps)$ and $\del'=\del_{\ref{2stab}}(\eps)$. Suppose that $\del<\min\{\gamma^2k^{-2} 2^{-4k}, \del'2^{-2k}\}$ and that $n\geq \max\{n_{\ref{2stab}}(\del'), \del^{-1}\}$. Let $G$ be a $k$-coloured graph as in the statement of Theorem~\ref{red}. Let $x(G)$ be any choice of profile for $G$ and let the corresponding profile partition be $(V_\tau: \tau\in\{0,1,\ast\}^k)$. Note that $\norm{x(G)/n}=2^{k-1}$ and by Proposition~\ref{constraints}, we have that $x(G)/n\in X(\sqrt{\delta}k2^{2k})\subseteq X(\gamma)$. By Corollary~\ref{close1} there exists an element $x^\ast\in O$ such that 
\begin{equation}\label{2col}
\norm{x(G)/n-x^*}<\eps.
\end{equation}
Suppose that $x^\ast\in O\backslash O^\ast$, then $x_\tau=2$ for some $\tau\in\{0,1,\ast\}^k$ such that $\omega(\tau)=2$. It follows from \eqref{2col} that $x(G)_\tau=\abs{V_\tau}>(2-\eps)n\geq(3/2+\eps)n$. Let $H=G[V_\tau]$. By the definition of $V_\tau$, $H$ is a 2-coloured graph. Moreover since $G$ has at most $\delta \binom{v(G)}{2}\leq \del'\binom{v(H)}{2}$ edges missing, the same is true for $H$. It follows by Lemma~\ref{2stab} that $H$ contains a monochromatic connected matching of order $\geq(1+\del')n>(1+\delta)n$. However, by the definition of $V_\tau=V(H)$, any monochromatic component of $H$ is contained in a non-bipartite monochromatic component of $G$. Thus $G$ contains a monochromatic odd connected matching of order $>(1+\delta)n$ contrary to assumption. We conclude that $x^\ast\in O^\ast$.
\end{proof}

\section{The Regularity Method}
In this section we discuss the tools and results we need from the regularity method. Our starting point is  Szemer{\'e}di's Regularity Lemma~\cite{Szem} which we discuss briefly now.

 Let $G$ be a graph and let $A, B$ be disjoint subsets of $V(G)$. We call
 $$
 d_G(A,B):=\frac{e_G(A,B)}{\abs{A}\abs{B}}
 $$
 the \emph{density} of the pair $(A,B)$. For $\delta>0$, we say that the pair $(A,B)$ is $\del$-regular with respect to $G$ if, for every $A'\subseteq A$ and $B'\subseteq B$ satisfying $\abs{A'}\geq\delta\abs{A}$ and  $\abs{B'}\geq\delta\abs{B}$, we have
$$
\abs{d_G(A',B')-d_G(A,B)}<\delta.
$$
If, for $d>0$, we also have that $\abs{N_G(a)\cap B}\geq d\abs{B}$ for all $a\in A$ and $\abs{N_G(b)\cap A}\geq d\abs{A}$ for all $b\in B$, then we say that $(A,B)$ is $(\del, d)$-super-regular with respect to $G$. If the graph $G$ is clear from the context we may omit it from the above notation. The following is a version of Szemer{\'e}di's Regularity Lemma that appears as Theorem $1.18$ in~\cite{KomSim}.

\begin{theorem}[Multicolour Regularity Lemma]\label{reg}
For all $\delta>0$ and $k, \ell\in \mathbb{N}$ there exists $L=L(\del,k,\ell)$ and $M=M(\del,k,\ell)$ such that the following holds. For all $k$-coloured graphs $G$ on at least $M$ vertices, $V(G)$ may be partitioned into sets $V_0, V_1 \ldots, V_t$ such that
\begin{itemize}
\item $\ell\leq t\leq L$;
\item $\abs{V_0}<\del v(G)$ and $\abs{V_1}=\abs{V_2}=\ldots=\abs{V_t}$;
\item apart from at most $\del\binom{t}{2}$ exceptional pairs, the pairs $(V_i, V_j)$, $1\leq i<j\leq t$, are $\del$-regular with respect to $G_s$ for $s=1,\ldots,k$.
\end{itemize}

\end{theorem}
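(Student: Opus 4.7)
The plan is to adapt the energy-increment argument of Szemer\'edi's original Regularity Lemma so that all $k$ colour classes $G_1,\ldots,G_k$ are regularised simultaneously. For a partition $\mathcal{P}=\{V_1,\ldots,V_t\}$ of $V(G)$ (temporarily ignoring the exceptional set $V_0$) and any graph $H$ on $V(G)$, define the mean-square density
$$q(\mathcal{P},H)=\sum_{1\le i<j\le t}\frac{|V_i|\,|V_j|}{v(G)^2}\,d_H(V_i,V_j)^2,$$
and set the total energy $Q(\mathcal{P})=\sum_{s=1}^{k}q(\mathcal{P},G_s)$. Each summand lies in $[0,1/2]$, so $0\le Q(\mathcal{P})\le k/2$.

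The first step is a defect Cauchy--Schwarz lemma: any refinement of $\mathcal{P}$ can only increase $q(\cdot,H)$ for every graph $H$, and if $(V_i,V_j)$ fails to be $\delta$-regular with respect to some $G_s$, then there is a refinement of the pair $(V_i,V_j)$ into at most four pieces which increases $q(\cdot,G_s)$ by at least $\delta^4\,|V_i|\,|V_j|/v(G)^2$. Consequently, if more than $\delta\binom{t}{2}$ pairs fail to be $\delta$-regular for at least one colour, a common refinement handling every witness gives a new partition $\mathcal{P}'$ with $Q(\mathcal{P}')\ge Q(\mathcal{P})+\delta^5/2$.

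Starting from any equitable partition into $\ell$ classes, I would iterate this refinement step. Because $Q\le k/2$, the process terminates after at most $k\delta^{-5}$ iterations, producing a partition that is $\delta$-regular with respect to every $G_s$ outside at most $\delta\binom{t}{2}$ pairs. A standard cleanup then equitises the part sizes by sweeping at most $\delta v(G)$ vertices into the exceptional class $V_0$, giving the constants $L=L(\delta,k,\ell)$ and $M=M(\delta,k,\ell)$.

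The main obstacle is bookkeeping the refinement: at each stage every non-regular pair $(V_i,V_j)$ contributes a witness set $A_{ij,s}\subseteq V_i$ for each bad colour $s$, and $V_i$ must be subdivided by the common refinement of all $A_{ij,s}$ over $j$ and $s$, which inflates the number of parts by a factor of up to $2^{k(t-1)}$ per iteration. This produces a $k$-shifted tower-type bound on $L$, which is why only the existence of $L$ and $M$, and not their size, is asserted. The essential point making the simultaneous regularisation succeed is that the aggregated energy $Q$ records progress across all colour classes at once, so a single refinement step fixes witnesses from every colour in parallel rather than being forced to cycle through the colours one at a time.
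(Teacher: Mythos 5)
The paper does not prove this statement; it quotes it from Koml\'os and Simonovits~\cite{KomSim} (their Theorem~1.18) and uses it as a black box, so there is no in-paper proof to compare against. Your sketch is the standard energy-increment argument for the multicolour regularity lemma and it is essentially correct: you aggregate the mean-square densities $Q(\mathcal{P})=\sum_{s=1}^k q(\mathcal{P},G_s)\le k/2$, show via defect Cauchy--Schwarz that whenever more than $\delta\binom{t}{2}$ pairs are irregular in some colour a common refinement increases $Q$ by $\Omega(\delta^5)$, and conclude termination in $O(k\delta^{-5})$ steps. The bookkeeping you flag (re-equitising after each refinement, controlling the vertices discarded into $V_0$ during equitisation so that $|V_0|<\delta v(G)$ is maintained across all iterations, and the resulting tower-type bound on $L$) is exactly the remaining technical content, and your treatment of it is in line with the standard write-ups; the key structural point you identify --- that a single refinement handles witnesses from all $k$ colours in parallel because the energy functional sums over colours --- is precisely why the multicolour version comes for free from the one-colour argument.
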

 
 We now state some technical lemmas related to \L uczak's method of connected matchings. First we need a definition. (It might be useful at this point to recall Definition~\ref{def:conmat}.)
 
\begin{definition}\label{def:super}
Let $\del,d\in[0,1]$ and $q,m\geq1$ be integers.
\begin{itemize}
\item Let $F$ be a graph on vertex set $[q]$ and let $U_1, \ldots, U_q$ be disjoint sets of size $m$. We call a graph $H$ on vertex set $\bigcup_{i\in[q]}U_i$ a \emph{$(\del, m)$-regular blow-up} of $F$ if whenever $\{i,j\}\in E(F)$, we have that $(U_i, U_j)$ is a $\del$-regular pair.

\item If in addition to the above, $d(U_i, U_j)\geq d$ for each edge $\{i,j\}$ of $F$m then we say that $H$ has \emph{minimum density} $d$. 

\item Suppose that $F$ is a connected matching and $H$ is a $(\del, m)$-regular blow-up of $F$ with minimum density $d$. If for each matching edge $\{i,j\}$ of $F$, the pair $(U_i, U_j)$ is in fact $(\del, d)$-super-regular in $H$, then we say that $H$ is a \emph{$(\del, d, m)$-super-regular blow-up} of $F$.
\end{itemize}
\end{definition}
Versions of the following two lemmas abound in the literature (e.g.~\cite{Exact}, \cite{Lucz}), but here we give statements tailored to our needs. However since they are not new, we defer their proofs to the Appendix.
\begin{lemma}\label{embed}
Let $q\geq4$ and suppose that $\frac{1}{m}\ll\del\ll d$.  Let $F$ be a connected matching of order $q$ such that every vertex of $F$ is incident to a matching edge and let $H$ be a $(\del,d,m)$-super-regular blow-up of $F$. Then the following holds:  

If $i,j\in V(F)$ and there is an $ij$-path of length $r$ in $F$, then for every  pair of vertices $u\in U_i$, $w\in U_j$, there exists a $uw$-path of length $\ell$ in $H$ for each $3q\leq\ell\leq (1-6\del)qm$ such that $\ell\equiv r\pmod{2}$. 
\end{lemma}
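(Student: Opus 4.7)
The plan is to embed the $uw$-path by tracing the given $ij$-path $P=i_0 i_1 \ldots i_r$ in $F$ (with $i_0=i$, $i_r=j$) and absorbing the excess length $\ell-r$ via controlled excursions inside the super-regular matching pairs of $F$. Since $\ell\equiv r\pmod{2}$ and $\ell\geq 3q\geq r$, the excess is a non-negative even integer. First I would distribute it as $\ell-r=\sum_{s=0}^{r}\ell_s$ with each $\ell_s$ an even non-negative integer, arranged so that no cluster is overloaded: each $i_s$ appears on $P$ at most once and is the matching partner of at most one other vertex of $P$, so I can keep the number of vertices the excursions demand from any single cluster $U_j$ comfortably below $(1-5\delta)m$.

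Next I would plan for the final path to enter cluster $U_{i_s}$ at a vertex $a_s$, traverse the matching pair $(U_{i_s},U_{i_s'})$ along a path $P_s$ of length $\ell_s$ ending at some $b_s\in U_{i_s}$, and then cross to $U_{i_{s+1}}$ via an edge $\{b_s,a_{s+1}\}$ of $H$ lying in the regular pair $(U_{i_s},U_{i_{s+1}})$; the boundary conditions are $a_0=u$ and $b_r=w$, giving total length $\sum_s\ell_s+r=\ell$ as required. Two well-known regularity tools will realise this plan: (i) in any $\delta$-regular pair of density at least $d$, all but a $\delta$-fraction of vertices on each side have at least $(d-\delta)m$ neighbours across the pair, so I can greedily select the crossing vertices $a_s,b_s$ while avoiding a small set of already-committed vertices; (ii) in a $(\delta,d)$-super-regular pair $(A,B)$ with $\abs{A}=\abs{B}=m$, for every $a,b\in A$, every even $\ell_s$ between an absolute constant and $(1-5\delta)m$, and every forbidden set $S\subseteq A\cup B$ with $\abs{S}\leq 5\delta m$, there exists an $ab$-path of length $\ell_s$ in $(A\cup B)\setminus S$. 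The latter is essentially a \emph{Hamilton-connectedness with prescribed length} statement, proved by first producing a short $ab$-path through a common neighbour in $B\setminus S$ (abundant by super-regularity) and then splicing in a Hamilton-type segment on a suitable subset of the unused vertices, using the minimum-degree condition together with $\delta$-regularity.

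With these tools the assembly is a standard greedy induction along $P$. At stage $s$, with all vertices from previous stages committed, I would apply (i) to pick $b_s$ and $a_{s+1}$ with $\{b_s,a_{s+1}\}$ an edge of $H$ outside the current forbidden set, and then apply (ii) in $(U_{i_s},U_{i_s'})$ with endpoints $a_s,b_s$, length $\ell_s$, and forbidden set the vertices already placed. The bookkeeping is tight but benign: any cluster $U_j$ receives vertices only from its own excursion (when $j$ lies on $P$) and from the excursion at its matching partner (when that partner is also on $P$), so the total usage in $U_j$ is at most $\ell_s/2+\ell_t/2+O(1)\leq(1-\delta)m$, well within the slack demanded by (i) and (ii).

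The hard part will be tool (ii) — the variable-length path-embedding lemma with forbidden vertices inside a super-regular pair — which is the classical technical core underpinning all applications of \L uczak's connected-matchings method (see, e.g., \cite{Exact,Lucz}). Once (ii) is accepted, the remaining greedy argument is routine, and everything else reduces to careful vertex accounting driven by the choice of the $\ell_s$.
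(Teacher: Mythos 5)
There is a genuine gap in the proposed construction: the route you build only visits the clusters $U_{i_s}$ for $s=0,\ldots,r$ together with their matching partners, so the total number of clusters touched is at most $2(r+1)$. Since an excursion inside one super-regular pair has length at most roughly $2(1-5\del)m$, the longest $uw$-path your scheme can produce is of order $(r+1)\cdot 2m$, which falls far short of the required upper bound $(1-6\del)qm$ whenever the $ij$-path $P$ is short (e.g.\ $r=0$ or $r=1$, both of which are permitted). Nothing in your bookkeeping fixes this: distributing the excess $\ell-r$ among the $\ell_s$ cannot exceed the capacity of the clusters actually visited.

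The paper avoids this by first routing through \emph{all} clusters before attaching $P$: it takes a spanning tree $T$ of $F$ containing every matching edge, builds a closed walk from $i$ to $i$ that traverses each edge of $T$ twice (hence visits every cluster and uses every matching pair), lifts this to a path $\widetilde{W}$ in $H$ of length $2(q-1)$, then appends a lift $\widetilde{P}$ of the $ij$-path $P$; only afterwards does it splice long odd-length detours into each matching pair (via the analogue of your tool (ii)). This base path has length $r+2(q-1)\equiv r\pmod 2$ and visits all $q/2$ matching pairs, so the lengthening step genuinely reaches $(1-6\del)qm$. The lower bound $3q$ in the statement is exactly what this base path needs. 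Your tool (ii) is essentially the same ingredient the paper uses (its Lemma~\ref{embe}); what's missing is the spanning-walk preamble that guarantees coverage of every matching pair regardless of how short $P$ is.
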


\begin{lemma}\label{embed2}
Let $q\geq 4$ and let $\frac{1}{m}\ll\del\ll d$. Let $F$ be an odd connected matching of order $q$ and suppose that $H$ is a $(\del, m)$-regular blow-up of $F$ with minimum density $d$. Then $H$~contains a cycle of length $\ell$ for each odd $3q\leq\ell\leq(1-6\del)qm$.
\end{lemma}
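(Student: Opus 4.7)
My plan is to reduce Lemma~\ref{embed2} to Lemma~\ref{embed} by (a) promoting regularity to super-regularity on the matching edges of $F$, and (b) using an odd cycle of $F$ to supply an even auxiliary path, which together with a single edge of $H$ closes into an odd cycle of any prescribed length.

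First I would perform the standard cleaning to upgrade regularity to super-regularity on the matching edges. For each matching edge $\{i,j\}$ of $F$, at most $\delta|U_i|$ vertices of $U_i$ have fewer than $(d-\delta)|U_j|$ neighbours in $U_j$ (and symmetrically). Deleting these exceptional vertices from every part yields subparts $U_i'$ of size $m'\geq(1-\delta)m$, and the resulting blow-up $H'$ is a $(\delta',d',m')$-super-regular blow-up of $F$ with $\delta'=O(\delta)$ and $d'=d-O(\delta)$, parameters that still satisfy $\tfrac{1}{m'}\ll\delta'\ll d'$, so that Lemma~\ref{embed} applies to $H'$.

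Since $F$ is non-bipartite I would then select an odd cycle $C=c_1c_2\cdots c_{2s+1}c_1$ in $F$ and set $e=\{c_1,c_2\}$, so that $C\setminus e$ is a $c_1c_2$-path in $F$ of even length $2s$. Because $e\in E(F)$, the pair $(U_{c_1}',U_{c_2}')$ has density at least $d-O(\delta)$ and therefore contains an edge $\{u_1,u_2\}$ with $u_1\in U_{c_1}'$ and $u_2\in U_{c_2}'$. Applying Lemma~\ref{embed} to $F$ and $H'$ with endpoints $u_1,u_2$ and the even $c_1c_2$-path $C\setminus e$ gives, for every even integer $\ell^{*}\in[3q,\,(1-6\delta')qm']$, a $u_1u_2$-path $P$ of length $\ell^{*}$ in $H'$. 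Adding the edge $\{u_1,u_2\}$ closes $P$ into a cycle of odd length $\ell^{*}+1$; as $\ell^{*}$ ranges over the even integers in this interval, the cycle lengths $\ell^{*}+1$ cover every odd integer in $[3q+1,\,(1-6\delta')qm'+1]$, which by the slack $\delta\ll d$ contains the target range $[3q,(1-6\delta)qm]$.

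I expect the main obstacle to be a book-keeping issue at the boundary between the two lemmas: Lemma~\ref{embed} requires every vertex of $F$ to be incident to a matching edge, whereas Lemma~\ref{embed2} only assumes a maximum matching saturating $q$ vertices, so if the odd cycle $C$ happens to pass through an unmatched vertex the naive restriction of $F$ to its matched vertices can destroy non-bipartiteness. This forces a careful choice of $C$ --- either entirely inside the matched subgraph when that subgraph is itself non-bipartite, or else routed through an unmatched vertex via a short non-matching detour whose parity is absorbed when applying Lemma~\ref{embed}. Once $C$ is chosen appropriately, the remainder of the argument is routine parameter chasing to verify that the super-regularisation losses are small enough that the interval $[3q+1,(1-6\delta')qm'+1]$ dominates $[3q,(1-6\delta)qm]$.
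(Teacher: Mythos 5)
Your reduction to Lemma~\ref{embed} runs into exactly the obstacle you flag in your last paragraph, but the difficulty is more serious than your sketch of a workaround admits. Lemma~\ref{embed} is only stated for connected matchings in which \emph{every} vertex is saturated by the chosen maximum matching, whereas the $F$ in Lemma~\ref{embed2} need not have this property. Your first option, restricting to the matched subgraph, can fail outright: if $F$ is a bowtie (two triangles sharing a vertex), a maximum matching is a pair of opposite edges, the shared vertex is unsaturated, and the induced subgraph on the four saturated vertices is two disjoint edges --- disconnected, and in particular bipartite. Even when the matched subgraph is connected, every odd cycle of $F$ may pass through unsaturated vertices, so the even auxiliary path you want to feed into Lemma~\ref{embed} does not exist there. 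Your fallback --- a ``short non-matching detour'' through unsaturated parts ``whose parity is absorbed'' --- is precisely the nontrivial part of the argument; in general several unsaturated vertices can sit on any walk that both connects all the matching edges and witnesses non-bipartiteness, and the detours have to be interleaved with the super-regular stretching while keeping track of connectivity, disjointness and parity. So the reduction to Lemma~\ref{embed} is not the clean shortcut it first appears to be: the case it actually covers is the one where every vertex of $F$ is matched, which is the hypothesis Lemma~\ref{embed} was designed to exclude from Lemma~\ref{embed2}.

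The paper avoids Lemma~\ref{embed} altogether. It first constructs, directly in $F$, a closed walk $W'$ of odd length $p\leq 3q$ that covers an odd cycle and a tree containing all matching edges (this walk is happy to pass through unsaturated vertices), embeds a short odd cycle $D$ in $H$ along $W'$, and then uses the one-pair Lemma~\ref{embe} to replace, one by one, each edge of $D$ lying in a super-regular matching pair by an odd path of any desired short odd length. This decouples the parity-plus-connectivity issue (get \emph{some} odd cycle of controlled short length in the blow-up) from the length issue (stretch it inside the super-regular pairs), and is indifferent to where the unsaturated vertices of $F$ fall. Salvaging your approach would in effect mean redoing this construction by hand inside the ``detour,'' at which point the appeal to Lemma~\ref{embed} buys you nothing. (The parameter book-keeping you defer --- that $(1-6\delta')qm'+1$ still dominates $(1-6\delta)qm$ after the super-regularisation losses --- is also tighter than ``slack from $\delta\ll d$'' suggests, since that slack controls $d$, not the constant in front of $\delta$; but this is a secondary point.)
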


We borrow the following fact. 
 \begin{fact}(\cite[Lemma 9]{Gya}). \label{simple}
 Let $H$ be a $(1-\del)$-dense graph on $t$ vertices. Then $H$ has a~subgraph $H'$ such that $v(H')\geq(1-\sqrt{\del})t$ and $\del(H')\geq(1-2\sqrt{\del})t$.
 \end{fact}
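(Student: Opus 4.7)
The plan is to apply a one-step handshake-lemma argument to the complement $\bar H$. Since $H$ is $(1-\del)$-dense on $t$ vertices, we have $e(\bar H)\le\del\binom{t}{2}\le\del t^2/2$, and consequently
$$\sum_{v\in V(H)}d_{\bar H}(v)\ \le\ \del t^2.$$

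The natural candidate for $H'$ is $H$ with its high-complement-degree vertices deleted. Accordingly I would define
$$B\ :=\ \{v\in V(H):\ d_{\bar H}(v)\ge\sqrt{\del}\,t\}$$
and set $H':=H-B$. A Markov-style count on the degree sum above gives $\sqrt{\del}\,t\cdot|B|\le\del t^2$, so $|B|\le\sqrt{\del}\,t$. This immediately yields $v(H')=t-|B|\ge(1-\sqrt{\del})t$, establishing the first conclusion.

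For the minimum degree, fix any $v\in V(H')$. By construction $d_{\bar H}(v)<\sqrt{\del}\,t$, so $d_H(v)=(t-1)-d_{\bar H}(v)>(1-\sqrt{\del})t-1$. Passing from $H$ to $H'$ removes at most $|B|\le\sqrt{\del}\,t$ neighbours of $v$, so
$$d_{H'}(v)\ \ge\ d_H(v)-|B|\ \ge\ (1-2\sqrt{\del})t-1,$$
which is the stated bound up to a harmless additive constant.

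The only subtle point is the off-by-one in the minimum-degree estimate; this is cosmetic and can be absorbed either by sharpening the threshold in the definition of $B$ (using, say, $(\sqrt{\del}-1/t)\,t$) or by weakening the constant $2\sqrt\del$ by a negligible amount. In any event Fact~\ref{simple} is applied only in regimes where $\sqrt{\del}\,t\gg 1$, so this plays no role downstream. I do not anticipate any real obstacle here: the whole argument is a single application of Markov's inequality to the complement-degree distribution, followed by a two-line accounting of how many neighbours of a surviving vertex are lost when $B$ is deleted.
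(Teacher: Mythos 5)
The paper does not prove this Fact; it is cited from Gy\'arf\'as {\it et al.}\ and used as a black box, so there is no in-paper argument to compare against. Your one-shot ``delete the high complement-degree vertices'' argument is a standard and reasonable route, and the first conclusion $v(H')\geq(1-\sqrt\del)t$ is correct as you prove it.

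The issue is the minimum-degree bound, and it is slightly more than cosmetic in the sense that the repair you sketch does not actually work. Your argument gives $d_{H'}(v)\geq(t-1)-d_{\bar H}(v)-|B|>(1-2\sqrt\del)t-1$, and since $d_{H'}(v)$, $d_{\bar H}(v)$, $|B|$ are integers while $2\sqrt\del t$ need not be, the strict inequalities cannot in general be promoted to $d_{H'}(v)\geq(1-2\sqrt\del)t$. Lowering the threshold defining $B$ from $\sqrt\del\,t$ to $\sqrt\del\,t-1$ makes $B$ \emph{larger}, not smaller: Markov then only gives $|B|\leq\del t^2/(\sqrt\del\,t-1)>\sqrt\del\,t$, so you lose the $v(H')\geq(1-\sqrt\del)t$ conclusion. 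Optimising the threshold $\alpha t$ yields $d_{H'}(v)\geq(t-1)-\alpha t-\del t/\alpha\geq(t-1)-2\sqrt\del\,t$ at best (attained at $\alpha=\sqrt\del$), so the one-shot approach inherently loses an additive $1$. The clean way to get the stated bound is the usual \emph{iterative} peeling argument: repeatedly delete any vertex of degree $<(1-2\sqrt\del)t$ in the current graph; if this continued for $s=\sqrt\del\,t$ steps, the $i$-th deleted vertex would contribute more than $2\sqrt\del\,t-i$ new non-edges, giving $e(\bar H)>\sum_{i=1}^{s}(2\sqrt\del\,t-i)=\tfrac32\del t^2-\tfrac12\sqrt\del\,t>\del\binom{t}{2}$ once $\sqrt\del\,t\geq1$, a contradiction; and the process of course stops at a graph whose minimum degree is $\geq(1-2\sqrt\del)t$ by construction. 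That said, the paper explicitly adopts the convention of suppressing floor/ceiling issues where immaterial, and Fact~\ref{simple} is only invoked with $\del$ fixed and $t\to\infty$, so in practice your version suffices; you should just not claim the threshold tweak fixes it, since it doesn't.
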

 
We will also need the following two standard facts whose proofs we omit it here.
 
\begin{fact}\label{slicing}
Let $0<\del\leq1/2$ and let $(A,B)$ be a $\del$-regular pair with density $d$. Suppose that $A'\subseteq A$, $B'\subseteq B$ such that $\abs{A'}\geq(1-\del)\abs{A}$, $\abs{B'}\geq(1-\del)\abs{B}$. Then $(A', B')$ is $2\del$-regular with density $d'>d-\del$. Moreover, if $(A,B)$ is in fact $(\del, \beta)$-super-regular for some $\beta>0$, then $(A',B')$ is $(2\del, \beta-\del)$-super-regular. \qed
\end{fact}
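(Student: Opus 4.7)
The plan is to verify each of the three assertions directly from the definitions, exploiting the fact that $A'$ and $B'$ occupy almost all of $A$ and $B$ respectively.

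First I would handle the density claim. Since $|A'| \geq (1-\delta)|A| \geq \delta|A|$ (using $\delta \leq 1/2$) and likewise $|B'| \geq \delta|B|$, the pair $(A', B')$ is already a legitimate test pair for the $\delta$-regularity of $(A,B)$. Applying the definition directly gives $|d(A', B') - d(A,B)| < \delta$, and hence $d' := d(A', B') > d - \delta$ as required.

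Next I would establish $2\delta$-regularity of $(A', B')$. Let $A'' \subseteq A'$ and $B'' \subseteq B'$ with $|A''| \geq 2\delta|A'|$ and $|B''| \geq 2\delta|B'|$. The key inequality is
\[
|A''| \geq 2\delta|A'| \geq 2\delta(1-\delta)|A| \geq \delta|A|,
\]
where the last step again uses $\delta \leq 1/2$; the analogous bound holds for $|B''|$. Therefore $(A'',B'')$ is also a test pair for $\delta$-regularity of $(A,B)$, yielding $|d(A'',B'') - d(A,B)| < \delta$. Combining with the density bound from the previous step via the triangle inequality gives $|d(A'',B'') - d(A',B')| < 2\delta$, which is precisely the $2\delta$-regularity of $(A',B')$.

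Finally I would verify the super-regularity refinement. Assume $(A,B)$ is $(\delta,\beta)$-super-regular. For any $a \in A'$, since $|B \setminus B'| \leq \delta |B|$, we have
\[
|N(a) \cap B'| \geq |N(a) \cap B| - |B \setminus B'| \geq \beta|B| - \delta|B| = (\beta-\delta)|B| \geq (\beta-\delta)|B'|,
\]
where the last step uses $|B'| \leq |B|$. A symmetric argument handles vertices of $B'$, completing the minimum-degree condition. Combined with the $2\delta$-regularity proved above, this shows $(A',B')$ is $(2\delta, \beta-\delta)$-super-regular.

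There is no genuine obstacle here; the only thing to be careful about is confirming that the hypothesis $\delta \leq 1/2$ is really what makes $2\delta|A'| \geq \delta|A|$ work, so that the regularity of $(A,B)$ can be applied to the shrunken test subsets without losing a further factor.
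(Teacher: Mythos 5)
Your proof is correct and is exactly the standard slicing-lemma argument that the paper omits (the Fact is stated with a \qed and introduced as one of ``two standard facts whose proofs we omit''). One tiny point worth noting: your final step $(\beta-\delta)\lvert B\rvert \geq (\beta-\delta)\lvert B'\rvert$ requires $\beta\geq\delta$, but when $\beta<\delta$ the super-regularity conclusion holds vacuously since $\lvert N(a)\cap B'\rvert\geq 0>(\beta-\delta)\lvert B'\rvert$, so the result is complete either way.
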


 \begin{fact}\label{super}
Let $0<\del\leq1/2$ and let $(A,B)$ be a $\del$-regular pair with density $d$. Then there exist $A'\subseteq A$, $B'\subseteq B$ such that $\abs{A'}=(1-\del)\abs{A}$, $\abs{B'}=(1-\del)\abs{B}$ and $(A', B')$ is $\left(2\del, d-2\del\right)$-super-regular.\qed
\end{fact}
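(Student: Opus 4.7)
The plan is the standard one: identify the vertices of atypically small degree into the other class, discard them, and then verify that what remains is super-regular.

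First I would isolate the \emph{bad} vertices. Let
\[
A_0 = \{a \in A : \abs{N(a) \cap B} < (d-\del)\abs{B}\},
\qquad
B_0 = \{b \in B : \abs{N(b) \cap A} < (d-\del)\abs{A}\}.
\]
If $\abs{A_0} \geq \del \abs{A}$ then $(A_0, B)$ would witness $\abs{d(A_0, B) - d(A, B)} \geq \del$, contradicting $\del$-regularity of $(A,B)$; hence $\abs{A_0} < \del\abs{A}$, and symmetrically $\abs{B_0} < \del\abs{B}$.

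Next I would choose $A' \subseteq A \setminus A_0$ and $B' \subseteq B \setminus B_0$ of sizes exactly $(1-\del)\abs{A}$ and $(1-\del)\abs{B}$ respectively (possible since $\abs{A \setminus A_0} > (1-\del)\abs{A}$ and similarly for $B$). Fact~\ref{slicing} applied to these subsets (each of size at least $(1-\del)$ times the original) gives that $(A', B')$ is $2\del$-regular with density at least $d - \del$, which covers the regularity half of the claim.

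For the super-regularity condition, fix any $a \in A'$. By construction $a \notin A_0$, so $\abs{N(a) \cap B} \geq (d-\del)\abs{B}$. Since $\abs{B \setminus B'} \leq \del\abs{B}$, we conclude
\[
\abs{N(a) \cap B'} \;\geq\; (d-\del)\abs{B} - \del\abs{B} \;=\; (d-2\del)\abs{B} \;\geq\; (d-2\del)\abs{B'},
\]
and the symmetric inequality holds for every $b \in B'$. Combined with the $2\del$-regularity from the previous step, this gives that $(A', B')$ is $(2\del, d-2\del)$-super-regular. There is no real obstacle here; the only mildly delicate point is the bookkeeping in the two bounds $(1-\del)\abs{A}$ and $(1-\del)\abs{B}$, which must be realized with equality, and this is handled by the freedom in choosing $A'$ and $B'$ inside the complements of $A_0$ and $B_0$.
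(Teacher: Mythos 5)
Your argument is correct and is exactly the standard proof that the paper implicitly has in mind (the paper states this as a well-known fact and omits the proof). Removing the at most $\del\abs{A}$ and $\del\abs{B}$ vertices of atypically low degree, invoking Fact~\ref{slicing} for the regularity, and then noting that each surviving vertex loses at most $\del\abs{B}$ (resp.\ $\del\abs{A}$) potential neighbours is precisely the intended argument, and all the quantitative bookkeeping checks out.
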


\section{Proof of Main Theorem}
In this final section we prove our main result Theorem~\ref{conj}, and therefore also Theorem~\ref{main}. The idea is to invoke Theorem \ref{red} to show that the profile of a certain reduced graph is close in $\ell_1$-norm to the profile of a hypercube colouring. We then translate this information to show that the original graph is close in edit distance to a hypercube colouring.

The stability-type methods of this section require some care due to the plethora of extremal constructions. Luckily hypercube colourings share enough common features for these methods to be viable and surprisingly we require no case analysis. 

First let us state a result which is a corollary of a classical theorem of Bondy~\cite{bondy1971pancyclic}.
\begin{theorem}\label{thm:pan}
Let $G$ be a graph on at least $3$ vertices with minimum degree $>v(G)/2$, then $G$ is pancyclic i.e. $G$ contains cycles of all lengths $3\leq\ell\leq v(G)$.
\end{theorem}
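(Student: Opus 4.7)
The plan is to reduce the statement to two classical results: Dirac's Hamiltonicity theorem and Bondy's pancyclicity theorem~\cite{bondy1971pancyclic}. The former supplies the single cycle of length $v(G)$ for free, and the latter then handles all the shorter cycle lengths simultaneously, modulo a single exceptional structure that we will need to exclude by hand.

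First I would note that $\delta(G) > v(G)/2$ is strictly stronger than the Dirac condition $\delta(G) \geq v(G)/2$, so $G$ contains a Hamilton cycle and in particular a cycle of length $v(G)$. To apply Bondy's theorem it remains to check the edge-count hypothesis; by the handshake lemma,
\[
e(G) \;=\; \tfrac{1}{2}\sum_{v \in V(G)} d(v) \;\geq\; \tfrac{1}{2}\,v(G)\,\delta(G) \;>\; \tfrac{v(G)^2}{4}.
\]
Bondy's theorem then asserts that any Hamiltonian graph on $n \geq 3$ vertices with at least $n^2/4$ edges is either pancyclic or isomorphic to the balanced complete bipartite graph $K_{n/2,n/2}$.

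The final step is to exclude the exceptional case. The graph $K_{n/2,n/2}$ has minimum degree exactly $n/2$, which is incompatible with the strict inequality $\delta(G) > v(G)/2$ in the hypothesis, so this case cannot arise. Hence $G$ must be pancyclic, as required.

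Since the substantive content is entirely contained in the classical Dirac and Bondy theorems, there is essentially no obstacle to overcome here. The only delicate point worth flagging is that the strict inequality in the hypothesis, rather than the non-strict Dirac-type bound, is precisely what is used to rule out the bipartite extremal configuration in Bondy's dichotomy.
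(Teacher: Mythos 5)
Your argument is correct and matches the paper's approach: the paper simply states this as a corollary of Bondy's theorem without spelling out the deduction, and you have supplied exactly the intended derivation, including the use of the strict inequality to rule out the $K_{n/2,n/2}$ exception.
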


\begin{proof}[Proof of Theorem~\ref{conj}]
Let $0<\eps<2^{-4k}$, let 
\begin{equation}\label{initial}
\eta<\delta\leq\min\left\{\frac{1}{9}\delta_{\ref{red}}^2(\eps), \delta_{\ref{embed}}\left(\frac{1}{k}\right),  \delta_{\ref{embed2}}\left(\frac{1}{k}\right)\right\}, 
\end{equation}
let $n_0\geq\max\{n_{\ref{red}}(\del), \del^{-1/2}\}$ and let $L=L_{\ref{reg}}(\del, k, 2^kn_0)$.
Let $n$ be odd with
\begin{equation}\label{eq:bign}
n\geq \max\{Lm_{\ref{embed}}(\del), Lm_{\ref{embed2}}(\del), M_{\ref{reg}}(\del, k, 2^kn_0)\}.
\end{equation}
Finally let $G$ be a $k$-coloured copy of $K_N$ where $N>(2^{k-1}-\eta)n$ and assume that 
 \begin{equation}\label{assumption}
 \text{$G$ contains no monochromatic copy of $C_n$.}\tag{\dag}
 \end{equation}
 Applying Theorem~\ref{reg} to $G$ we obtain a partition of $V(G)$ into sets $V_0, V_1 \ldots, V_{t_0}$ such that
\begin{enumerate}[(i)]
\item $2^kn_0\leq t_0\leq L$;
\item $\abs{V_0}<\del N$ and $\abs{V_1}=\abs{V_2}=\ldots=\abs{V_{t_0}}$;
\item apart from at most $\del\binom{t_0}{2}$ exceptional pairs, the pairs $(V_i, V_j)$, $1\leq i<j\leq t_0$, are $\del$-regular with respect to $G_s$ for $s=1,\ldots,k$.
\end{enumerate}
It follows that for $i\in[t_0]$,
\begin{equation}\label{blow}
m:=\abs{V_i}\geq\frac{(1-\del)N}{t_0}.
\end{equation}

We construct a \emph{reduced graph} $R_0$ with vertex set $\{1,...,t_0\}$ and edge set formed by pairs $\{u, w\}$ for which $(V_u, V_w )$ is $\del$-regular with respect to $G_i$ for $i=1,\ldots,k$. It follows from (iii) of the above that $R_0$ is $(1-\del)$-dense. Fact~\ref{simple} allows us to find a subgraph $R\subseteq R_0$ satisfying $v(R)\geq(1-\sqrt{\delta})t_0$ and $\delta(R)\geq (1-2\sqrt{\delta})t_0$. Let $t=v(R)$ and assume without loss of generality that $V(R)=\{1, \ldots, t\}$. We $k$-colour $R$ by colouring an edge $\{u,w\}$ with the least colour $i$ for which 
 \begin{equation}\label{blow2}
 d_{G_i}(V_u, V_w)\geq \frac{1}{k}. 
\end{equation}
 Let $t'=t/2^{k-1}$ and note that by \eqref{blow} and the definition of $t$,
 \begin{equation}\label{mt}
 mt'\geq(1-2\sqrt{\del})n.
 \end{equation}
 Suppose that $R$ contains a monochromatic odd connected matching $F$ of order $q\geq(1+3\sqrt{\delta})t'$. Then $G$ contains a monochromatic $(\delta,m)$-regular blow-up of $F$ with minimum density $d$ for some $d\geq1/k$ by \eqref{blow2}. Note that since $t_0\leq L$ we have $m\geq m_{\ref{embed2}}(\delta)$ by \eqref{blow} and \eqref{eq:bign}. It follows from Lemma~\ref{embed2} that $G$ contains a monochromatic copy of $C_n$ since $n$ is odd and
 $$
 3q\leq3L\leq n\leq(1-6\del)(1+3\sqrt{\del})mt'\leq(1-6\del)qm,
 $$
 contradicting \eqref{assumption}. We conclude that $R$ contains no such odd connected matching. Let $(W_\tau: \tau\in\{0,1,\ast\}^k)$ be a profile partition of $R$ and let $x(R)=(\abs{W_\tau}: \tau\in\{0,1,\ast\}^k)$ be the  corresponding profile. It follows by Theorem~\ref{red} that there exists $x^\ast\in O^\ast$ such that 
 \begin{equation}\label{decomp}
 \norm{x(R)/t'-x^\ast}<\eps.
 \end{equation}

This tells us a lot about the structure of $R$, indeed it is `close to' a hypercube colouring. The aim is to use this fact to eventually say the same for $G$. By the definition of $O^\ast$ we have that $$supp(x^\ast)=\mathcal{M}\subseteq\{0,1,\ast\}^k,$$ for some perfect matching $\mathcal{M}$ of the hypercube $Q_k$ and $x^\ast_\tau=1$ for all $\tau\in\mathcal{M}$. Let
$$
W=R\big\backslash\bigcup_{\tau\in\mathcal{M}}W_\tau.
$$
We will treat $W$ as a `leftover set' of vertices of $R$ and study only the structure of $R\backslash W$. Note that by \eqref{decomp} we have 
\begin{equation}\label{big}
(1-\eps)t'<\abs{W_\tau}<(1+\eps)t'\ \text{for all}\ \tau\in\mathcal{M},
\end{equation}
and so by removing at most $2\eps t'$ vertices from each part $W_\tau$, where $\tau\in\mathcal{M}$, and absorbing these removed vertices into $W$, we may assume that these parts $W_\tau$ all have the same size $>(1-\eps)t'$. Note that even after this absorption we have 
\begin{equation}\label{W}
\abs{W}=t-\sum_{\tau\in\mathcal{M}}\abs{W_\tau}<t-2^{k-1}(1-\eps)t'=\eps t.
\end{equation}
We make a couple of observations regarding the colouring of $R$ with respect to these vertex classes. For $j\in [k]$ we let 
\[
I_j=\{\tau\in\mathcal{M}: \tau_j=\ast\}.
\]

\begin{lemma}\label{clique}
Let $\tau\in I_j$. Then $R[W_\tau]$ is monochromatic in the colour $j$ and has minimum degree at least $(1-2^{k+1}\sqrt{\delta})\abs{W_\tau}$.
\end{lemma}
\begin{proof}
By the definition of the profile partition, for each colour $i\neq j$, each pair $v, w\in W_\tau$ must lie in the same vertex class in an induced bipartite subgraph of $R_{i}$. It follows that if $\{v,w\}\in E(R)$ then it cannot receive the colour $i$ and hence must receive colour $j$. Since $\delta(R)\geq(1-2\sqrt{\delta})t$ we have
$$
\delta(R[W_\tau])\geq \abs{W_\tau}-1-2\sqrt{\delta}t\geq(1-2^{k+1}\sqrt{\delta})\abs{W_\tau}
$$
where for the last inequality we used \eqref{big}.
\end{proof}

\begin{definition}
Let $\sigma, \tau\in\{0,1,\ast\}^k$. We denote the set $\{i\in[k]: \{\sigma_i, \tau_i\}=\{0,1\}\}$ by $\Delta(\sigma, \tau)$. We call $\abs{\Delta(\sigma, \tau)}$ the \emph{distance} between $\sigma$ and $\tau$ and denote it by $d(\sigma, \tau)$.
\end{definition}

\begin{lemma}\label{d1}
Let $\sigma, \tau\in \mathcal{M}$ be distinct, then 
\begin{enumerate}[(i)]
\item Each edge of $R[W_\sigma, W_\tau]$ receives a colour from the set $\Delta(\sigma, \tau)$;\label{d11}
\item $R[W_\sigma, W_\tau]$ has minimum degree $\geq(1-~2^{k+1}\sqrt{\delta})\abs{W_\sigma}$, in particular $R[W_\sigma, W_\tau]$ is connected and contains a perfect matching.\label{d12}
\end{enumerate}
\end{lemma}
\begin{proof}
Let $\sigma\in I_j$, $\tau\in I_\ell$. Suppose that $j\neq\ell$. By the definition of the profile partition, for each colour $i\notin\Delta(\sigma, \tau)$, each pair $v\in W_\sigma$, $w\in W_\tau$ must lie in either the same vertex class in an induced bipartite subgraph of $R_i$ or they lie in different connected components of $R_i$. It follows that if $\{v, w\}\in E(R)$ then it must receive a colour from $\Delta(\sigma, \tau)$. Similarly, if $j=\ell$ then each edge of $R[W_\sigma, W_\tau]$ must receive a colour from $\Delta(\sigma, \tau)\cup\{j\}$. However, by Lemma~\ref{clique},  $R[W_\tau]$ and $R[W_\sigma]$ are both monochromatic in the colour $j$ and both have minimum degree at least $(1-2^{k+1}\sqrt{\delta})\abs{W_\sigma}>\abs{W_\sigma}/2$ (recall that $\abs{W_\sigma}=\abs{W_\tau}$). It follows, by Theorem \ref{thm:pan} for example, that $R[W_\tau]$ and $R[W_\sigma]$ are both Hamiltonian and non-bipartite. Using \eqref{big}, we deduce that if an edge of  $R[W_\sigma, W_\tau]$ receives the colour $j$, then $R$ contains a monochromatic odd connected matching in the colour $j$ of order at least
$$\abs{W_\sigma}+\abs{W_\tau}-2\geq2(1-\eps)t'-2 \geq(1+3\sqrt{\delta})t',$$
 which we showed previously was not the case. Part $(i)$ of the lemma follows. If $v\in W_\tau$ then, since $\delta(R)\geq(1-2\sqrt{\delta})t$, we have
$$
\abs{N(v)\cap W_\sigma}\geq\abs{W_\sigma}-2\sqrt{\delta}t\geq(1-2^{k+1}\sqrt{\delta})\abs{W_\sigma}.
$$
Similarly if $w\in W_\sigma$, then $\abs{N(w)\cap W_\tau}\geq(1-2^{k+1}\sqrt{\delta})\abs{W_\sigma}$. Since $1-2^{k+1}\sqrt{\delta}>1/2$, it follows that $R[W_\sigma, W_\tau]$ is connected and (e.g. by Hall's theorem) contains a perfect matching.

\end{proof}
Let $\Gamma$ denote the $k$-coloured multigraph on vertex set $\mathcal{M}$ where we have an edge between $\sigma$ and $\tau$ in each colour $j$ for which $R[W_\sigma, W_\tau]$ contains an edge of colour $j$. Note that since $\del(R)>(1-2\sqrt{\del})t$ and $\abs{W_\sigma}=\abs{W_\tau}>(1-\eps)t'> 2\sqrt{\del}t$, $R[W_\sigma, W_\tau]$ always contains an edge. Let $\Gamma^{\ast}$ denote the subgraph of $\Gamma$ where we keep only those edges that occur as the unique edge between a given pair of vertices in $\Gamma$. Recall that for $j\in [k]$, $\Gamma_j$, $\Gamma^\ast_j$ denote the $j$th colour class of $\Gamma$, $\Gamma^\ast$ respectively.

\begin{lemma}\label{H}
For each $j\in [k]$, the vertices of $\Gamma^\ast_j$ can be covered by a matching $T_j\subseteq\Gamma^\ast_j$ and the set $I_j$. Moreover $I_j$ is a set of isolated vertices in $\Gamma_j$.
\end{lemma}
\begin{proof}
Fix $j\in[k]$. If $\sigma\in I_j$ then $\sigma$ is an isolated vertex in $\Gamma_j$ by Lemma~\ref{d1}\eqref{d11}. If $\sigma\notin I_j$ then we may assume without loss of generality that $\sigma_j=0$. Let $\sigma'$ be the element of $\{0,1,\ast\}^k$ such that $\sigma'_j=1$ and $\sigma'_i=\sigma_i$ for all $i\neq j$. Let $H$ be the graph on $\mathcal{M}$ with edge set $\{\{\rho, \pi\}: \Delta(\rho, \pi)=\{j\}\}$. By Lemma~\ref{d1}\eqref{d11} we have $H\subseteq \Gamma^\ast_j$. The neighbours of $\sigma$ in $H$ are precisely those elements of $\mathcal{M}$ that are indistinguishable from $\sigma'$ i.e. those elements of $\mathcal{M}$ (viewed as edges of $Q_k$) that intersect $Q(\sigma')$. Since $\mathcal{M}$ is a perfect matching of $Q_k$, and $\abs{Q(\sigma')}=2$, there are either 1 or 2 such elements of $\mathcal{M}$. It follows that $H$ is the disjoint union of cycles (where we consider an edge a cycle) and the independent set $I_j$. Since $H$ is bipartite with bipartition $\{\tau\in\mathcal{M}: \tau_j=0\ \text{or}\ \ast\}\cup\{\tau\in\mathcal{M}: \tau_j=1\}$, the cycles in $H$ are all even. The result follows.
\end{proof}

Let $j\in[k]$, then for each $\{\sigma, \tau\}\in T_j$ ($T_j$ as in the statement of Lemma~\ref{H}), we may fix a monochromatic perfect matching $M_{\sigma\tau}^j$ in the colour $j$ in $R[W_\sigma, W_\tau]$ by Lemmas~\ref{d1}\eqref{d12} and \ref{H}. Let 
\[
\mathcal{T}_j=\bigcup_{\{\sigma, \tau\}\in T_j}M_{\sigma\tau}^j.
\]
and note that $\mathcal{T}_j$ is a matching in $R$, monochromatic in the colour $j$, which covers the vertex set $\bigcup_{\tau\notin I_j}W_\tau$. The following corollary hints at an important common feature of all hypercube colourings.

\begin{corollary}\label{cor:common}
Given $j\in[k]$ and $\rho\in\mathcal{M}\backslash I_j$, there exists $\pi\in\mathcal{M}$ such that $R[W_\rho,W_\pi]$ contains a monochromatic connected perfect matching in the colour $j$ whose matching edges are edges of $\mathcal{T}_j$.
\end{corollary}
\begin{proof}
Since $\rho\notin I_j$, by Lemma \ref{H} there must exist $\pi\in\mathcal{M}$ such that $\{\rho,\pi\}$ is an edge of $T_j\subseteq\Gamma^\ast_j$. By the definition of $\Gamma^\ast$, we have that $R[W_\rho, W_\pi]$ is monochromatic in the colour $j$. The result follows from the definition of $\mathcal{T}_j$ and Lemma~\ref{d1}\eqref{d12}.
\end{proof}

It will be useful to prune the sets $V_i$ for $i\in R$ in such a way that if $\{x,y\}$ is an edge of the matching $\mathcal{T}_j$ then $G_j[V_x, V_y]$ is super-regular. 

\begin{lemma}\label{lem:supersub}
For each $i\in R$ there exists $V_i'\subseteq V_i$ such that 
\begin{enumerate}[(i)]
\item $\abs{V_i'}=(1-2^k\del)m$ for all $i\in R$,
\item $G_j[V'_x, V'_y]$ is $2^{k+1}\del$-regular with density $\geq\frac{1}{k+1}$ for all $j\in[k], \{x,y\}\in R_j$,
\item $G_j[V'_x, V'_y]$ is $(2^{k+1}\del, \frac{1}{k+1})$-super-regular for all $j\in[k], \{x,y\}\in\mathcal{T}_j$.
\end{enumerate}
\end{lemma}
\begin{proof}
For $i\in R$ we define a sequence of subsets $V_i^0, \ldots, V_i^k$ of $V_i$ recursively. Let $V_i^0:=V_i$ for all $i\in R$. Suppose that for all $i\in R$ we have found $V_i^\ell\subseteq V_i$ with the following properties.
\begin{enumerate}[(a)]
\item $\abs{V_i^\ell}\geq(1-2^\ell\del)m$ for all $i\in R$,\label{aa}
\item $G_j[V^\ell_x, V^\ell_y]$ is $2^\ell\del$-regular with density $\geq1/k-2^{\ell}\del$ for all $j\in[k], \{x,y\}\in R_j$,\label{bb}
\item $G_j[V^\ell_x, V^\ell_y]$ is $(2^\ell\del, 1/k-2^{\ell+1}\del)$-super-regular for all $j\in[\ell], \{x,y\}\in\mathcal{T}_j$.\label{cc}
\end{enumerate}
By Fact~\ref{super}, for each edge $\{u,w\}$ in the matching $\mathcal{T}_{\ell+1}$ (so in particular $\{u,w\}\in R_{\ell+1}$) there exists $V^{\ell+1}_u\subseteq V^{\ell}_u$ and $V^{\ell+1}_w\subseteq V^{\ell}_w$ such that $\abs{V^{\ell+1}_u}=(1-2^{\ell}\del)\abs{V^{\ell}_u}$, $\abs{V^{\ell+1}_w}=(1-2^{\ell}\del)\abs{V^{\ell}_w}$ and $G_{\ell+1}[V^{\ell+1}_u, V^{\ell+1}_w]$ is $(2^{\ell+1}\del, 1/k-2^{\ell+2}\del)$-super-regular. If $i$ is not incident to any edge of $\mathcal{T}_{\ell+1}$ then simply set $V_i^{\ell+1}=V_i^\ell$. Note that by \eqref{aa}, for all $i\in R$,
\[
\abs{V_i^{\ell+1}}\geq(1-2^{\ell}\del)\abs{V^{\ell}_i}\geq(1-2^{\ell}\del)^2m\geq(1-2^{\ell+1}\del)m.
\]
For $j\in[k]$ and $\{x,y\}\in R_j$, by \eqref{bb} and Fact \ref{slicing} we have that $G_j[V^{\ell+1}_x, V^{\ell+1}_y]$ is $2^{\ell+1}\del$-regular with density $\geq1/k-2^{\ell+1}\del$. Using \eqref{cc} and Fact \ref{slicing}, it also follows that $G_j[V^{\ell+1}_x, V^{\ell+1}_y]$ is $(2^{\ell+1}\del, 1/k-2^{\ell+2}\del)$-super-regular for all $j\in[\ell], \{x,y\}\in\mathcal{T}_j$. We have shown that the sets $V_i^{\ell+1}, i\in R$, satisfy \eqref{aa}-\eqref{cc} (with $\ell$ replaced by $\ell+1$). The result follows by letting $V_i'$ be any subset of $V_i^k$ of size $(1-2^k\del)m$ for all $i\in R$ and appealing to Fact \ref{slicing}, noting that $1/(k+1)\leq 1/k-2^{k+2}\del$. 

\end{proof}

Given $\sigma\in \mathcal{M}$, let 
\[
\widetilde{W}_\sigma=\bigcup_{i\in W_\sigma}V'_i\subseteq V(G),
\]
and let
\[
\widetilde{W}=V(G)\big\backslash\bigcup_{\tau\in\mathcal{M}}\widetilde{W}_\tau.
\]
As with $W$, we think of $\widetilde{W}$ as a small leftover set of vertices. Let $m':=(1-2^{k}\del)m$ and note that by \eqref{mt}, $m't'\geq(1-3\sqrt{\del})n$ and so by \eqref{big}
\begin{equation}\label{m}
\abs{\widetilde{W_\tau}}\geq(1-\eps)m't'\geq(1-2\eps)n\ \text{for all }\tau\in\mathcal{M}.
\end{equation}
We also have
\begin{equation}\label{Wtilde}
\abs{\widetilde{W}}\leq N-2^{k-1}(1-\eps)m't'=N-(1-\eps)(1-2^{k}\del)mt\leq2\eps N. 
\end{equation}
Where for the last inequality we recalled \eqref{blow}.

We can now establish our first piece of structure on the graph $G$. We show that almost all of $V(G)$ can be covered by $2^{k-1}$ monochromatic cliques of equal size. First we make a quick definition.
 \begin{definition}
If $\tau\in\{0,1,\ast\}^k$ has weight 1, we let $c(\tau)$ denote the unique element of $i\in[k]$ for which $\tau_i=\ast$.
\end{definition}
 
\begin{lemma}\label{comp}
For all $\sigma\in\mathcal{M}$, $G[\widetilde{W}_\sigma]$ is monochromatic in the colour $c(\sigma)$.
\end{lemma}

\begin{proof}

Suppose that $G[\widetilde{W}_\sigma]$ contains an edge $\{x,y\}$ of colour $j\neq c(\sigma)$ (so that $\sigma\notin I_j$). By Corollary~\ref{cor:common} there exists $\tau\in\mathcal{M}$ such that $R_j[W_\sigma, W_\tau]$ contains a connected perfect matching, $F$, whose matching edges are edges of $\mathcal{T}_j$. Let $q:=v(F)$, then by \eqref{big} we have $2(1-\eps)t'\leq q\leq2(1+\eps)t'$. By Lemma \ref{lem:supersub} we see that $G_j[\widetilde{W}_\sigma, \widetilde{W}_\tau]$ contains a spanning $(2^{k+1}\del, 1/(k+1), m')$-super-regular blow-up of $F$ (with the $V'_i$ playing the role of the $U_i$ in Definition \ref{def:super}). Suppose that $x\in V'_a$ and $y\in V'_b$, then $a$ and $b$ lie on the same side of the bipartition of the connected graph $F$ and so $F$ contains an $ab$-path of even length. Note that $m'\geq n_{\ref{embed}}(\del)$, by \eqref{blow} and \eqref{eq:bign}. We may therefore apply Lemma~\ref{embed} to deduce that $G_j[\widetilde{W}_\sigma, \widetilde{W}_\tau]$ contains a path of length $n-1$ joining $x$ and $y$ since $n-1\equiv 0 \pmod{2}$ and
\begin{equation}\label{eq:asin}
(1-6\cdot2^k\del)qm'\geq2(1-6\cdot2^k\del)(1-\eps)m't'\geq n-1\geq3L\geq3q,
\end{equation}
where we used \eqref{initial}, \eqref{eq:bign} and \eqref{m}. Together with the edge $\{x,y\}$ this creates a monochromatic copy of $C_n$ in $G$, contrary to assumption \eqref{assumption}.
\end{proof}
Our aim now is to say something about the edges of $G$ lying between $\widetilde{W}$ and the rest of the graph (see Lemma~\ref{fin} below). With the multigraph $\Gamma$ in mind, we make the following definition. 

\begin{definition}\label{def:admis}
Let $\mathcal{M}'$ be a perfect matching of $Q_k$ and let $\varphi : \mathcal{M}\to\mathcal{M}'$ be a bijection such that $c(\varphi(\tau))=c(\tau)$ for all $\tau\in\mathcal{M}$. Suppose that $\Psi$ is a $k$-coloured multigraph on vertex set $\mathcal{M}$. We call $\varphi$ an \emph{admissible labelling} of $\Psi$ if for all $\sigma, \tau\in\mathcal{M}$, the edges between $\sigma, \tau$ in $\Psi$ only take colours from the set $\Delta(\varphi(\sigma),\varphi( \tau))$.
\end{definition}

Note that by Lemma~\ref{d1}\eqref{d11} the identity map $\iota : \mathcal{M}\to\mathcal{M}$ is an admissible labelling of $\Gamma$. The following Lemma gives a useful way of generating new admissible labellings of $\Gamma$. For $\tau\in\{0,1,\ast\}^k$, such that $\tau_j\neq\ast$, we let $\tau^j:=(\tau_1,\ldots,\tau_{j-1},1-\tau_j,\tau_{j+1},\ldots,\tau_k)$  i.e. $\tau^j$ denotes $\tau$ with the $j$th coordinate flipped.

\begin{lemma}\label{admis}
Let $\varphi$ be an admissible labelling of $\Gamma$. Let $j\in[k]$ and let $C$ be the vertex set of a component of $\Gamma_j$ such that $\tau_j\neq\ast$ for all $\tau\in C$. Let $\varphi'$ be the function on $\mathcal{M}$ given by $\varphi'(\tau)=\varphi(\tau)^j$ for all $\tau\in C$, $\varphi'(\tau)=\varphi(\tau)$ otherwise. Then $\varphi'$ is an admissible labelling of $\Gamma$.
\end{lemma}

\begin{proof}
First note that by the definition of $\varphi'$ and the fact that $\varphi$ is admissible, each element of $\varphi'(\mathcal{M})$ has weight $1$ and $c(\varphi'(\tau))=c(\varphi(\tau))=c(\tau)$ for all $\tau\in\mathcal{M}$. Let us check that the image of $\varphi'$ is a perfect matching of $Q_k$ (i.e. a distinguishable set of size $2^{k-1}$). It suffices to show that if $\sigma, \tau\in\mathcal{M}$ are distinct, then $\varphi'(\sigma), \varphi'(\tau)$ are distinguishable (i.e. $\Delta(\varphi'(\sigma),\varphi'(\tau))\neq\emptyset$). We do this by considering an edge between $\sigma$ and $\tau$ in $\Gamma$ and showing that if it has the colour $i$ then $i\in\Delta(\varphi'(\sigma),\varphi'(\tau))$. Note that this in fact suffices to show that $\varphi'$ is admissible.

Suppose then that there is an edge between $\sigma, \tau$ in $\Gamma$ in the colour $i$. Since $\varphi$ is admissible we have $i\in\Delta(\varphi(\sigma),\varphi(\tau))$. Suppose that $i\neq j$ then by the definition of $\varphi'$, $\varphi'(\tau)_i=\varphi(\tau)_i$ and $\varphi'(\sigma)_i=\varphi(\sigma)_i$ and so $i\in\Delta(\varphi'(\sigma),\varphi'(\tau))$ also. Suppose then that $i=j$, so that either $\sigma, \tau\in C$ or $\sigma, \tau\in\mathcal{M}\backslash C$. If $\sigma, \tau\in C$, then $\varphi'(\tau)_i=1-\varphi(\tau)_i$, $\varphi'(\sigma)_i=1-\varphi(\sigma)_i$ and if $\sigma, \tau\in\mathcal{M}\backslash C$, then $\varphi'(\tau)_i=\varphi(\tau)_i$, $\varphi'(\sigma)_i=\varphi(\sigma)_i$. In either case $i\in\Delta(\varphi'(\sigma),\varphi'(\tau))$.\\
\end{proof}

The following lemma allows us to associate each vertex in $\widetilde{W}$ to some class $\widetilde{W}_\sigma$ in $G$.

\begin{lemma}\label{fin}
Let $v\in \widetilde{W}$. Then there exists $\sigma\in \mathcal{M}$ such that $G[v, \widetilde{W}_\sigma]$ is monochromatic in the colour $c(\sigma)$.
\end{lemma}
\begin{proof}
Suppose otherwise, then for each $\sigma\in \mathcal{M}$ there exists a $u\in\widetilde{W}_\sigma$ such that the edge $\{v,u\}$ receives a colour $j_\sigma\neq c(\sigma)$. We augment the multigraph $\Gamma$ in the following way. We add the vertex $v$ to $\Gamma$ and for each $\sigma\in\mathcal{M}$ we add an edge between $v$ and $\sigma$ in the colour $j_\sigma$. Let us call this augmented multigraph $\Gamma^+$.\\

\begin{claim}
$\Gamma^+$ contains a monochromatic odd cycle.
\end{claim}
\begin{proof}[Proof of Claim]
Suppose otherwise and choose an admissible labelling $\varphi$ of $\Gamma$ that minimises the function 
$$
S(\varphi)=\sum_{i\in[k]}\abs{\{\tau\in\mathcal{M}: j_\tau=i\text{ and } \varphi(\tau)_i=1\}}.
$$ Suppose that $S(\mathcal{\varphi})>0$, then there exists a colour $j\in[k]$ and an element $\sigma\in \mathcal{M}$ for which $j_\sigma=j$ and $\varphi(\sigma)_j=1$. Let $C$ denote the component of $\Gamma_j$ containing the vertex $\sigma$ and note that by the definition of admissibility $C$ is bipartite with parts $\{\tau\in C : \varphi(\tau)_j=0\}$ and $\{\tau\in C : \varphi(\tau)_j=1\}$. Note that since $C$ is connected in $\Gamma_j$ this is the unique bipartition of $C$. Since $\Gamma^+_j$ is bipartite by assumption we must therefore have that $\varphi(\tau)_j=1$ for all $\tau\in C$ such that $j_\tau=j$. Let $\varphi'$ denote the function on $\mathcal{M}$ given by $\varphi'(\tau)=\varphi(\tau)^j$ for all $\tau\in C$, $\varphi'(\tau)=\varphi(\tau)$ otherwise. By Lemma~\ref{admis}, $\varphi'$ is an admissible labelling of $\Gamma$, however $S(\varphi')<S(\varphi)$ contradicting the minimality of $\varphi$. We conclude that $S(\varphi)=0$ i.e.
\begin{equation}\label{foreach}
\text{For all $i\in[k]$, $\tau\in \mathcal{M}$, if $j_\tau=i$ then $\varphi(\tau)_i=0$.}
\end{equation}
Since $\varphi(\mathcal{M})$ is a perfect matching of $Q_k$, there must exist $\rho\in\mathcal{M}$ such that the edge $\varphi(\rho)$ is incident to the vertex $(1,1,\ldots,1)$ (formally $Q(\varphi(\rho))$ contains $(1,1,\ldots,1)$). Without loss of generality suppose $\varphi(\rho)=(\ast, 1, \ldots, 1)$. However, whatever value $j_\rho$ takes, we contradict \eqref{foreach}. This concludes the proof of the claim.
\renewcommand{\qedsymbol}{\footnotesize{$\square$}}
\end{proof}

Suppose that $\Gamma^+$ contains a monochromatic odd cycle in the colour $j$. Since $\Gamma_j$ is bipartite and $\Gamma^+_j$ is not, there must exist $\sigma, \tau\in \mathcal{M}$ such that $\sigma, \tau$ lie in opposite parts of the bipartition of a connected component in $\Gamma_j$ and the edges $\{v, \sigma\}, \{v, \tau\}$ both have colour $j$ in $\Gamma^+$. By the definition of $\Gamma^+$, there exist vertices $u\in\widetilde{W}_\sigma$, $w\in\widetilde{W}_\tau$ such that $\{v, u\}$ and $\{v,w\}$ both have colour $j$ in $G$ and $j\neq c(\sigma)$ or $c(\tau)$ i.e. $\sigma, \tau\notin I_j$. Suppose that $u\in V'_a$ and $w\in V'_b$ then by Lemmas~\ref{d1}\eqref{d12} and \ref{H} and the definition of $\mathcal{T}_j$, $a$ and $b$ lie in opposite parts of a bipartite connected matching, $F$, in $R_j$ whose matching edges span $F$ and are edges of $\mathcal{T}_j$ (in particular there is an $ab$-path of odd length in $F$). Moreover we may assume that $F$ spans the vertex sets $W_\sigma, W_\tau$ in $R_j$ and so by \eqref{big}, $2(1-\eps)t'\leq v(F)\leq v(R)\leq L$. By Lemma \ref{lem:supersub}, we have a $(2^{k+1}\del, 1/(k+1), m')$-super-regular blow-up of $F$ in $G_j$. By Lemma~\ref{embed} (using inequalities as in \eqref{eq:asin} and noting that $n-2$ is odd) there exists a path of length $n-2$ joining $u$ and $w$ in $G_j$. This together with the edges $\{v, u\}$, $\{v,w\}$ forms a monochromatic copy of $C_n$ in $G$ contrary to assumption \eqref{assumption}. This concludes the proof of Lemma~\ref{fin}.
\end{proof}

Using Lemma~\ref{fin} we may define a function $f: \widetilde{W}\to\mathcal{M}$ where $f(v)$ is an element of $\mathcal{M}$ such that $G[v, \widetilde{W}_{f(v)}]$ is monochromatic in the colour $c(f(v))$. For each $\tau\in\mathcal{M}$, let $U_\tau=\widetilde{W}_\tau\cup f^{-1}(\{\tau\})$. By \eqref{m}, \eqref{Wtilde}, \eqref{blow} and Lemma~\ref{comp} we have that 
\begin{equation}\label{U}
\delta(G_{c(\tau)}[U_\tau])\geq(1-2^{k+1}\eps)\abs{U_\tau}\ \text{for all }\tau\in\mathcal{M}.
\end{equation}
Note that the sets $U_\tau$, $\tau\in\mathcal{M}$, partition the vertex set of $G$ and so if $N\geq 2^{k-1}(n-1)+1$ then by the pigeonhole principle there exists $\sigma\in\mathcal{M}$ such that $\abs{U_\sigma}\geq n$. However, by \eqref{U} and Theorem \ref{thm:pan}, it follows that $U_\sigma$ contains a monochromatic copy of $C_n$ in the colour $c(\sigma)$, contrary to assumption \eqref{assumption}. We therefore have that $N\leq2^{k-1}(n-1)$. Note that at this point we have done enough to prove Theorem~\ref{main}.

 It remains to show that $G$ is close in edit distance to a hypercube colouring. Recall that $\abs{\widetilde{W}}\leq2\eps N$ and so there are at most $2\eps N^2$ edges of $G$ incident to $\widetilde{W}$. We now aim to show that $G\backslash\widetilde{W}$ is close to a hypercube colouring. Recall that we have partitioned the vertex set of $G\backslash\widetilde{W}$ into the monochromatic, equally sized cliques $\{\widetilde{W}_\tau:\tau\in\mathcal{M}\}$. For $\sigma\in\mathcal{M}$, we showed that $\abs{\widetilde{W}_\sigma}\geq(1-2\eps)n$ and $\widetilde{W}_\sigma$ is monochromatic in the colour $c(\sigma)$. First note that at most $2\eps n$ vertices of $G\backslash\widetilde{W}_\sigma$ have more than $2\eps n$ neighbours in $\widetilde{W}_\sigma$ in the colour $c(\sigma)$ else we immediately find a monochromatic $C_n$ in the colour $c(\sigma)$ in $G$. It follows that there are at most $2\eps n N$ edges leaving the clique $\widetilde{W}_\sigma$ in the colour $c(\sigma)$. Over all $\tau\in\mathcal{M}$, there are therefore at most $2^k\eps n N<3\eps N^2$ edges in total leaving a clique $\widetilde{W}_\tau$ in the colour $c(\tau)$.

Let $\Phi$ now be the multigraph on vertex set $\mathcal{M}$ where we have an edge between $\sigma$ and $\tau$ in the colour $j$ for each $j\notin\{c(\sigma),c(\tau)\}$ for which $G[\widetilde{W}_\sigma,\widetilde{W}_\tau]$ contains a matching of two edges in the colour $j$. First we observe that to complete the proof it suffices to show that there exists an admissible labelling $\varphi$ of $\Phi$ (recall Definition \ref{def:admis}). Indeed suppose that this is the case, then since $\varphi$ is admissible, for each pair of distinct $\sigma, \tau\in\mathcal{M}$ and each $j\notin\Delta(\varphi(\sigma), \varphi(\tau))\cup\{c(\sigma),c(\tau)\}$, we have that $G_j[\widetilde{W}_\sigma,\widetilde{W}_\tau]$ contains no matching of two edges and hence contains at most $\abs{\widetilde{W}_\sigma}<n$ edges in total. It follows that there is a hypercube colouring $H$ associated to the perfect matching $\varphi(\mathcal{M})$ of $Q_k$, where $H$ has vertex set $V(G)\backslash\widetilde{W}$, such that for each $i\in[k]$,
\[
\abs{G_i\triangle H_i}\leq2\eps N^2+\abs{(G\backslash\widetilde{W})_i\triangle H_i}\leq 2\eps N^2+3\eps N^2+n\binom{2^{k-1}}{2}\leq6\eps N^2.
\]
The $2\eps N^2$ term accounts for edges of $G_i$ incident to $\widetilde{W}$, the $3\eps N^2$ term accounts for edges of $G_i$ leaving a clique $\widetilde{W}_\tau$ where $c(\tau)=i$, and the $n\binom{2^{k-1}}{2}$ term accounts for edges of $G_i$ lying between pairs $\widetilde{W}_\tau, \widetilde{W}_\sigma$ for which $i\notin\Delta(\varphi(\sigma), \varphi(\tau))\cup\{c(\sigma),c(\tau)\}$. We have thus shown that $G$ is $6\eps$-close to $H$. It remains to show that we have the desired labelling of $\Phi$.
\begin{claim}\label{claim:mono}
$\Phi$ contains no monochromatic odd cycle.
\end{claim}
\begin{proof}[Proof of Claim]
Suppose otherwise and let $\sigma_1\ldots\sigma_\ell$ be an odd cycle in $\Phi$ in the colour $j$. This allows us to fix a matching of size two in graphs $G_j[\widetilde{W}_{\sigma_i}, \widetilde{W}_{\sigma_{i+1}}]$ for $i=1,\ldots,\ell$ (where $\sigma_{\ell+1}:=\sigma_1$). Let $S$ be the subset of vertices of $G$ saturated by these matchings and note that $\abs{S}<2^{k+1}$. We first aim to build a short even path in $G_j$ with endpoints in $\widetilde{W}_{\sigma_1}$ and $\widetilde{W}_{\sigma_\ell}$. 

Let $x\in S\cap \widetilde{W}_{\sigma_1}$ and suppose that for some $2\leq r<\ell$ there exists $y\in \widetilde{W}_{\sigma_r}$ such that $G_j$ contains an $xy$-path $P_r$ of length $r-1+2L(r-2)$ where $\abs{P_r\cap S\cap\widetilde{W}_{\sigma_r}}=1$ and $P_r\cap S\cap\widetilde{W}_{\sigma_s}=\emptyset$ for $r<s\leq\ell$ (note that this does indeed hold for $r=2$). We may then pick $w\in\widetilde{W}_{\sigma_r}\cap S$ and $z\in \widetilde{W}_{\sigma_{r+1}}\cap S$ such that $\{w,z\}$ is an edge of $G_j[\widetilde{W}_{\sigma_{r}}, \widetilde{W}_{\sigma_{r+1}}]$ and $w\neq y$ (here we are using that we have a matching of size two available to us by the definition of $\Phi$). By the definition of $\Phi$, $\sigma_r$ is not in $I_j$ and so by Corollary \ref{cor:common} there exists $\pi\in\mathcal{M}$ such that $R_j[W_{\sigma_r}, W_\pi]$ contains a connected perfect matching, $F$, whose matching edges are edges of $\mathcal{T}_j$. By Lemma \ref{lem:supersub} we see that $G_j[\widetilde{W}_{\sigma_r}, \widetilde{W}_\pi]$ contains a spanning $(2^{k+1}\del, 1/(k+1), m')$-super-regular blow-up of $F$ where $2(1-\eps)t'\leq v(F)\leq2(1+\eps)t'$ by \eqref{big}. Moreover $w$ and $y$ lie in the same part in the bipartition of this blow-up. By calculations similar to those made previously, we may apply Lemma~\ref{embed} to deduce that $G_j[\widetilde{W}_{\sigma_2}, \widetilde{W}_\pi]$ contains an $yw$-path $Q$ of length $2L$. Moreover, since $\abs{P_r\cup S}\leq2^kL$ and using Fact~\ref{slicing} it is easy to ensure that $Q$ only intersects $P_r\cup S$ at its endpoints. It follows that $P_{r+1}:=P_rQz$ is an $xz$-path of length $r+2L(r-1)$ where $\abs{P_{r+1}\cap S\cap\widetilde{W}_{\sigma_{r+1}}}=1$ and $P_{r+1}\cap S\cap\widetilde{W}_{\sigma_s}=\emptyset$ for $r+1<s\leq\ell$. It follows by recursion that there exists $u\in\widetilde{W}_{\sigma_\ell}$ and an $xu$-path $P_\ell$ of length $p:=\ell-1+2L(\ell-2)$ and $\abs{P_\ell\cap S\cap\widetilde{W}_{\sigma_{\ell}}}=1$. Note that the length of $P_\ell$ is even.

Finally, let $\{v,t\}\subseteq S$ be an edge in $G_j[\widetilde{W}_{\sigma_\ell}, \widetilde{W}_{\sigma_1}]$  where $v\in\widetilde{W}_{\sigma_\ell}$ and $v\neq u$. If $x=t$ then applying Lemma~\ref{embed} as above we find a $uv$-path $Q_0$ in the colour $j$ of length $n-p-1$ intersecting $P_\ell\cup S$ only at its endpoints. It follows that $P_\ell Q_0x$ is a monochromatic copy of $C_n$ contradicting \eqref{assumption}. Similarly, if $x\neq t$, we find a $uv$-path $Q_1$ of length $2L$ and a $tx$-path $Q_2$ of length $n-p-2L-1$ both in the colour $j$ so that $P_\ell Q_1t Q_2$ is a monochromatic copy of $C_n$ contradicting \eqref{assumption}.
\renewcommand{\qedsymbol}{\footnotesize{$\square$}}
\end{proof}

We now construct an admissible labelling of $\Phi$ recursively. Suppose that $\mathcal{M}'$ is a perfect matching of $Q_k$ and that $\psi : \mathcal{M}\to\mathcal{M}'$ is some bijection. Let $\sigma, \tau\in\mathcal{M}$ and suppose there is an edge $f$ in $\Phi$ between $\sigma$ and $\tau$ with colour $j$ not in $\Delta(\psi(\sigma),\psi(\tau))$. We will call such an edge `bad' (with respect to $\psi$).

Let $\{f_1, \ldots, f_t\}$ be the set of edges of $\Phi$ that are bad with respect to the identity map $\iota: \mathcal{M}\to\mathcal{M}$ and note that $\iota$ is an admissible labelling of $\Phi\backslash\{f_1, \ldots, f_t\}$. Suppose now that $\varphi_i$ is an admissible labelling of $\Phi^i:=\Phi\backslash\{f_1,\ldots, f_i\}$ for some $1\leq i\leq t$. Suppose that $f_i$ is bad with respect to $\varphi_i$ and that $f_i$ has colour $j$ and lies between $\sigma, \tau\in\mathcal{M}$. Note that $j\notin\{c(\sigma), c(\tau)\}$ by the definition of $\Phi$. Moreover by the admissibility of $\varphi_{i}$ we have $c(\sigma)=c(\varphi_{i}(\sigma))$ and $c(\tau)=c(\varphi_{i}(\tau))$. Since $f_i$ is bad it follows that we must have $\varphi_i(\sigma)_j=\varphi_i(\tau)_j\in\{0,1\}$. Let us show that $\sigma, \tau$ lie in separate components of $\Phi^i_j$ (the $j$th colour class of $\Phi^i$). Suppose otherwise and take a path in $\Phi^i_j$ joining $\sigma$ and $\tau$. Since $\varphi_i$ is admissible for $\Phi^i$ and $\varphi_i(\sigma)_j=\varphi_i(\tau)_j$ this path must have even length. It follows that $f_i$ completes this path to a monochromatic odd cycle in $\Phi$ contradicting Claim \ref{claim:mono}. Let $C$ then denote the component of $\Phi^i_j$ containing $\tau$ (so that $\sigma\notin C$). Let $\varphi_{i-1}$ be the function on $\mathcal{M}$ given by $\varphi_{i-1}(\tau)=\varphi_{i}(\tau)^j$ for all $\tau\in C$, $\varphi_{i-1}(\tau)=\varphi_{i}(\tau)$ otherwise. By Lemma~\ref{admis}, $\varphi_{i-1}$ is an admissible labelling of $\Phi^i$. Since 
\[
j\in\Delta(\varphi_{i}(\sigma),\varphi_{i}( \tau)^j)=\Delta(\varphi_{i-1}(\sigma),\varphi_{i-1}( \tau)),
\] 
we also have that $\varphi_{i-1}$ is an admissible labelling of $\Phi^{i-1}$. If $f_i$ is not bad with respect to $\varphi_{i}$ we simply let $\varphi_{i-1}=\varphi_{i}$. Running this recursion to the end we obtain an admissible labelling $\varphi_0$ of $\Phi$ as required.

\end{proof}

\noindent
\textbf{Concluding Remarks.} A simple adaptation of the proof method in this paper proves the following generalisation of Theorem~\ref{main}.

\begin{theorem}
For all $k\geq3$ there exists $N_k$ such that the following holds. If $N_k\leq n_1\leq n_2\ldots\leq n_k$ are all odd then
$$
R(C_{n_1},\ldots,C_{n_k})=2^{k-1}(n_k-1)+1.
$$
\end{theorem}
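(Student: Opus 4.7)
The plan is to follow the proof of Theorem~\ref{conj} essentially verbatim, modifying only the normalisation in Proposition~\ref{constraints} to accommodate non-uniform cycle lengths. For the lower bound, let $\mathcal{M}$ be the perfect matching of $Q_k$ consisting of edges in the $k$th coordinate direction, i.e.\ $\mathcal{M} = \{\tau : \tau_k = \ast\}$, and form the associated hypercube colouring with clique size $n_k - 1$ on $N_0 := 2^{k-1}(n_k - 1)$ vertices. The colour-$k$ class is a disjoint union of $2^{k-1}$ copies of $K_{n_k-1}$, too small to contain $C_{n_k}$, while every other colour class is bipartite and therefore contains no odd cycle at all. Hence $R(C_{n_1},\ldots,C_{n_k}) > N_0$.

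For the upper bound, set $N = N_0 + 1$ and suppose for contradiction that $K_N$ admits a $k$-colouring avoiding $C_{n_i}$ in colour $i$ for each $i$. Apply the regularity method exactly as in the proof of Theorem~\ref{conj} to obtain a reduced graph $R$ with $t = v(R)$ and $t' = t/2^{k-1}$; the forbidden structure is now that $R$ contains no odd connected matching in colour $i$ of order exceeding $(1 + \delta) n_i t'/n_k$, as otherwise Lemma~\ref{embed2} would produce a monochromatic $C_{n_i}$ in $G$. Repeating the derivation of Proposition~\ref{constraints} with the colour-dependent Erd\H{o}s--Gallai bound $e(G_i'') \leq (1+\delta)\frac{n_i t'}{2n_k} \abs{V_\ast^i}$ (in place of $(1+\delta)\frac{n}{2}\abs{V_\ast^i}$), the normalised profile $v := x(R)/t'$ satisfies
\begin{equation*}
\tilde F(v) := \Big(\sum_\tau v_\tau\Big)^2 - 2 \sum_{\{\sigma,\tau\} \in D} v_\sigma v_\tau - \sum_\tau \tilde\omega(\tau) v_\tau \leq O(\sqrt{\delta}),
\end{equation*}
where $\tilde\omega(\tau) := \sum_{i:\tau_i = \ast} n_i/n_k \leq \omega(\tau)$, together with the natural box constraint $v_\tau \leq n_{c(\tau)}/n_k + o(1)$ on weight-$1$ coordinates and the usual incompatibility constraint.

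Since $\tilde F \geq F$ pointwise, the new feasible region is contained in $X(\gamma)$ from Section~\ref{sec:compress}, so Corollary~\ref{close1} applies directly and yields some $x^\ast \in O^\ast$ (corresponding to a perfect matching $\mathcal{M}'$ of $Q_k$) with $\norm{v - x^\ast} < \eps$. From this point the combinatorial stability and pigeonhole arguments in the proof of Theorem~\ref{conj} run mutatis mutandis: we build the monochromatic cliques $\widetilde W_\tau$ and $U_\tau$ associated to $\mathcal{M}'$, each of colour $c(\tau)$ and of size approximately $n_k - 1$, with the minimum-degree condition \eqref{U} intact. Pigeonhole on the partition $V(G) = \bigcup_{\tau \in \mathcal{M}'} U_\tau$ then produces some $\sigma \in \mathcal{M}'$ with $\abs{U_\sigma} \geq \lceil N/2^{k-1}\rceil = n_k$, and since $n_{c(\sigma)} \leq n_k \leq \abs{U_\sigma}$, the pancyclicity theorem (Theorem~\ref{thm:pan}) applied to $G_{c(\sigma)}[U_\sigma]$ delivers a monochromatic $C_{n_{c(\sigma)}}$, contradicting the assumption.

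The main technical concern is that the compression and KKT machinery of Sections~\ref{sec:compress}--\ref{sec:convex} would, in principle, need to be re-examined under the non-integer weights $\tilde\omega$, as Lemma~\ref{shift} and the integer-valued manipulations in Proposition~\ref{convex} exploit $\omega(\tau) \in \mathbb{Z}$. This is circumvented entirely by the pointwise bound $\tilde F \geq F$, which makes the modified feasible region a subset of $X(\gamma)$ and lets us import the norm bound $\norm{v} \leq 2^{k-1}$ and the approximate classification of optimal points directly from Proposition~\ref{props2} and Corollary~\ref{close1}. The remaining adjustments, such as tracking the colour-dependent length thresholds $n_i t'/n_k$ in the embedding lemmas and keeping $N_k$ large enough for the implicit regularity and stability parameters, are routine bookkeeping.
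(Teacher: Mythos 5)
Your proposal is correct, and it supplies the details of the ``simple adaptation'' that the paper leaves unstated. The paper offers no proof of this theorem beyond asserting it follows by a simple adaptation, so there is no authorial argument to compare against, but your sketch is exactly the adaptation the authors have in mind. The lower bound via the hypercube colouring for $\mathcal{M}=\{\tau:\tau_k=\ast\}$ (colour $k$ a union of $K_{n_k-1}$'s; every other colour bipartite, hence free of all odd cycles) is the natural off-diagonal analogue of the Erd\H{o}s--Graham doubling construction. For the upper bound, your key observation is that after replacing $\omega(\tau)$ by $\tilde\omega(\tau)=\sum_{i:\tau_i=\ast}n_i/n_k\leq\omega(\tau)$ in the colour-dependent Erd\H{o}s--Gallai estimate, the resulting quadratic form satisfies $\tilde F\geq F$ pointwise on the nonnegative orthant, so the off-diagonal feasible region is already contained in $X(\gamma)$. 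This is precisely what spares you from re-deriving Sections~\ref{sec:compress} and~\ref{sec:convex} with fractional weights (where Lemma~\ref{shift} and the integer manipulations in Proposition~\ref{convex} would genuinely fail), and it lets Corollary~\ref{close1} and Theorem~\ref{red} be invoked verbatim. The remaining combinatorial steps you describe (the colour-dependent matching threshold $\approx n_it'/n_k$, the monochromatic cliques $\widetilde W_\tau$ of size $\approx n_k-1$, and the final pigeonhole together with Theorem~\ref{thm:pan} applied to $G_{c(\sigma)}[U_\sigma]$ using $n_{c(\sigma)}\leq n_k\leq\abs{U_\sigma}$) are indeed routine and complete the argument.
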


 The off diagonal case has been well-studied: Erd\H{o}s \emph{et al.}~\cite{Erd} determined the value of $R(C_{n}, C_{\ell_1}, C_{\ell_2})$ and $R(C_{n}, C_{\ell_1}, C_{\ell_2}, C_{\ell_3})$ for $\ell_i$ fixed and $n$ sufficiently large. In a similar vein, as a corollory to a more general result in the study of Ramsey goodness, Allen, Brightwell and Skokan~\cite{Good} determined the value of $R(C_n, C_{\ell_1},\ldots, C_{\ell_k})$ for $\ell_i$ fixed and odd satisfying $\ell_i>2^i$ for $1\leq i\leq k$ and $n$ sufficiently large. In~\cite{FigLucz2}, Figaj and \L uczak asymptotically determine the Ramsey number of a triple of large cycles with any fixed combination of parities for the cycle lengths. In the case where not all of the cycles have the same parity,  Ferguson~\cite{ferguson2015ramsey1, ferguson2015ramsey2, ferguson2015ramsey3} strengthened the asymptotic results of~\cite{FigLucz2} to exact results. It would be interesting to extend the methods of the present paper to such a mixed parity setting. More generally, we would like to investigate whether the analytic approach presented here has wider applications in Ramsey theory. 
\\

\noindent
\textbf{Acknowledgements.} We would like to thank Julia B{\"{o}}ttcher for helpful discussions and comments.
\bibliographystyle{halpha} 
\bibliography{Exact}

\appendix
\section{Proof of Lemmas \ref{embed} and \ref{embed2}}

We use the following simple property of regular pairs which appears as Lemma~5 in \cite{FigLucz2}.
\begin{lemma}\label{embe}
Let $1/m\ll\del\ll d$ and let $G=(V_1, V_2)$ be a $(\del, d)$-super-regular pair with $\abs{V_1}=\abs{V_2}=m$. Then for each pair $u\in V_1$, $w\in V_2$, $G$ contains a $uw$-path of length $\ell$ for each odd $3\leq\ell\leq 2(1-5\del)m$.
\end{lemma}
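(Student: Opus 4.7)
The goal is to construct, for each odd $\ell$ with $3 \leq \ell \leq 2(1-5\delta)m$, a $uw$-path of length $\ell$ in the $(\delta,d)$-super-regular pair $G = (V_1, V_2)$. Since $u \in V_1$, $w \in V_2$, and $\ell$ is odd, the path alternates sides and uses exactly $s := (\ell+1)/2$ vertices from each side (including $u$ and $w$). My plan is to reduce the problem to finding a Hamilton $uw$-path in a carefully chosen balanced sub-pair $(A, B)$ of sizes $s$.

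The first step is to select subsets $A \subseteq V_1$ of size $s$ containing $u$, and $B \subseteq V_2$ of size $s$ containing $w$, such that the induced sub-pair $G[A, B]$ remains super-regular with slightly worse parameters. When $s$ is reasonably large (say $s \geq \delta m$), a random selection argument works: standard Chernoff bounds combined with Fact \ref{slicing} show that, with positive probability, $G[A, B]$ is $(2\delta, d - 2\delta)$-super-regular, and in particular $u$ and $w$ each retain at least $(d - 2\delta)s$ neighbours in the opposite class. For the regime of very small $s$ (corresponding to small $\ell$), the path can be constructed directly and greedily: pick any $v_s \in N(w) \cap V_1$ (possible since $\deg(w) \geq dm$ by super-regularity), then by regularity extend iteratively by picking vertices with many neighbours in the remaining uncovered portion.

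The second and main step is to produce a Hamilton $uw$-path in $G[A, B]$. For this I would appeal to the standard fact that a balanced bipartite graph which is $(\delta', d')$-super-regular (with $\delta' \ll d'$) contains a Hamilton $uw$-path for any prescribed choice of endpoints $u \in A$, $w \in B$. This is proved by a Pósa-style rotation-extension argument: starting from a longest $uw$-path $P$, if $P$ fails to be Hamilton then regularity guarantees many "rotations" swapping an endpoint-adjacent edge to obtain many candidate new paths of the same length; eventually one such rotated path can be extended, contradicting the maximality of $P$ unless $P$ was already Hamilton. Alternatively, one can first extract a Hamilton cycle in the balanced super-regular pair $G[A, B]$ (another well-known consequence of super-regularity) and use a short local adjustment to force the endpoints to land at $u$ and $w$.

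The main obstacle will be the Hamilton $uw$-path step: producing a spanning path between two prescribed endpoints (rather than just any Hamilton cycle) requires some care, since the usual rotation-extension arguments need to be anchored at both $u$ and $w$ simultaneously. A clean way to do this is to apply rotations only at the non-$u$ endpoint until the rotated path becomes extendable towards $w$, using super-regularity at $w$ to close up. The slack $5\delta$ in the bound $\ell \leq 2(1-5\delta)m$ is exactly what allows this extension step to succeed: it leaves enough unused vertices on each side to guarantee that the rotations can always be carried out within the required regularity thresholds.
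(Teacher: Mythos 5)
The paper does not actually prove Lemma~\ref{embe}; it is quoted verbatim as Lemma~5 of Figaj and \L uczak \cite{FigLucz2}, so there is no in-paper argument to compare against. Judged on its own terms, your reduction to a Hamilton $uw$-path in a balanced sub-pair of size $s=(\ell+1)/2$, with a separate greedy argument for small $\ell$, is a sensible strategy, but the central step contains a genuine gap.

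The gap is precisely the one you flag: producing a spanning $uw$-path with \emph{both} endpoints prescribed. Your alternative plan (``extract a Hamilton cycle $C$ in $G[A,B]$ and use a short local adjustment to force the endpoints to land at $u$ and $w$'') does not work as stated: deleting an edge of $C$ gives a Hamilton path whose endpoints are the two ends of the deleted edge, so you would need $\{u,w\}$ to be an edge of $C$, and there is no reason $\{u,w\}$ should even be an edge of $G$, let alone of $C$; rerouting a Hamilton cycle to accommodate a prescribed vertex pair is a nontrivial operation, not a local fix. Your primary plan (P\'osa rotations ``at the non-$u$ endpoint until the rotated path becomes extendable towards $w$'') is closer to a real proof but is not yet an argument: rotation fixes one endpoint and shows that a large set of vertices can serve as the \emph{other} endpoint of a maximum-length path from $u$, and you would need to show separately that the rotation class contains a vertex adjacent to $w$ through vertices off the path, and that this closes the path at exactly the required length $2s-1$. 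The statement you want is true (it follows, for example, from the blow-up lemma of Koml\'os, S\'ark\"ozy and Szemer\'edi, which permits prescribing the images of boundedly many vertices of the spanning graph), but as written this step needs either a careful rotation argument or an explicit citation; ``eventually one such rotated path can be extended'' is not a proof.

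A secondary issue: Fact~\ref{slicing} applies only to subsets of size at least $(1-\del)m$, so it does not cover the ``moderate $s$'' range you invoke it for. The general slicing lemma does apply, but the regularity parameter degrades to about $\del m/s$, which must remain $\ll d$; this forces $s\gg \del m/d$, so the sub-pair argument only kicks in above that threshold and the greedy argument must cover everything below it. The regimes do overlap since $\del\ll d$, but your greedy sketch never addresses how to reach $w$ at exactly step $\ell$ (``picking vertices with many neighbours in the remaining uncovered portion'' makes no reference to $w$): you need an extra regularity argument pairing the unused part of a neighbourhood near the current endpoint with the unused part of $N(w)$, rather than minimum degree alone.
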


\begin{lem:embed}
Let $q\geq4$ and suppose that $\frac{1}{m}\ll\del\ll d$.  Let $F$ be a connected matching of order $q$ such that every vertex of $F$ is incident to a matching edge and let $H$ be a $(\del,d,m)$-super-regular blow-up of $F$. Then the following holds:  

If $i,j\in V(F)$ and there is an $ij$-path of length $r$ in $F$, then for every  pair of vertices $u\in U_i$, $w\in U_j$, there exists a $uw$-path of length $\ell$ in $H$ for each $3q\leq\ell\leq (1-6\del)qm$ such that $\ell\equiv r\pmod{2}$. 
\end{lem:embed}

\begin{proof}
Take $i,j\in V(F)$ and let $u\in U_i$, $w\in U_j$. Let $T$ be a spanning tree of $F$ which includes every matching edge of $F$. Note that $T$ contains a closed walk $W=y_0 \ldots y_p$, where $y_1=y_p=i$ and $W$ covers each edge of $T$ exactly twice, in particular $p=2(q-1)$ (note that $q=v(F)$). Using basic properties of regular pairs we can find a path $\widetilde{W}=w_0 \ldots w_p$ in $H$ where $u=w_0$ and $w_t\in U_{y_t}$ for all $t$. Let $P=x_0\ldots x_{r}$ be a path of length $r$ in $F$ where $x_0=i, x_{r}=j$. Again, using basic properties of regular pairs we can find a path $\widetilde{P}=v_0 \ldots v_{r}$ in $H$ where $v_0=w_p$, $v_{r}=w$, $v_t\in U_{x_t}$ for all $t$ and $\widetilde{P}$ intersects $\widetilde{W}$ only in the vertex $w_p$. Letting $Q=\widetilde{W}\widetilde{P}$, it follows that $Q$ is a $uw$-path in $H$ of length $r+p=r+2(q-1)\equiv r\pmod{2}$. Suppose that $\{a, b\}$ is a matching edge of $F$ so that  $(U_a, U_b)$ is $(\del, d)$-super-regular in $H$. Note that $Q$ visits each set $U_i$ in $H$ at most 3 times and so there exist $U'_a\subseteq U_a\backslash Q$, $U'_b\subseteq U_b\backslash Q$ such that $\abs{U'_a}=\abs{U'_b}=m-3$. Note that $(U'_a,U'_b)$ is certainly $(2\del, d/2)$-super-regular by Fact \ref{slicing}. By construction, we may pick consecutive vertices $w_t, w_{t+1}$ of $\widetilde{W}$ (and hence $Q$) such that $w_t\in U_a, w_{t+1}\in U_b$. By super-regularity we may then pick vertices $u_a\in N(w_{t+1})\cap U'_a$, $u_b\in N(w_{t})\cap U'_b$ such that $\{u_a,u_b\}$ is an edge of $H$. Applying Lemma~\ref{embe} to $(U_a', U_b')$ and vertices $u_a, u_b$, it follows that we can find a $q_tq_{t+1}$-path in $H$ which intersects $Q$ only at its endpoints and we can choose this path to have any odd length $1\leq\ell\leq 2(1-5\del)(m-3)+2$. Note that letting such a path replace the edge $\{q_t, q_{t+1}\}$ in $Q$ does not change the parity of the length of $Q$. Applying the same argument to each matching edge of $F$ we see that $H$ contains $uw$-paths of each length $r+2(q-1)\leq\ell \leq r+2(q-1)+\frac{q}{2}\cdot2(1-6\del)m$ for which $\ell\equiv r\pmod{2}$. The result follows.
\end{proof}

\begin{lem:embed2}
Let $q\geq 4$ and let $\frac{1}{m}\ll\del\ll d$. Let $F$ be an odd connected matching of order $q$ and suppose that $H$ is a $(\del, m)$-regular blow-up of $F$ with minimum density $d$. Then $H$ contains a cycle of length $\ell$ for each odd $3q\leq\ell\leq(1-6\del)qm$
\end{lem:embed2}

\begin{proof}
Since $F$ is non-bipartite it contains an odd cycle $C$. Since the largest matching in $F$ has $q/2$ edges it follows that $\abs{C}\leq q+1$. Let $T\subseteq F$ be a minimal tree that contains every matching edge of $F$. It is easy to show that $T$ must have $<2q$ vertices. Let $W$ be a closed walk in $T$ which traverses each edge of $T$ precisely twice (so in particular $W$ has even length). Since $W$ and $C$ must intersect, we can augment the walk $W$ by $C$ to obtain a closed walk $W'=x_1 \ldots x_p x_1$ in $F$ where $p$ is odd and $p\leq3q$ by the above. Note that by Facts~\ref{slicing} and \ref{super}, we can find $H'\subseteq H$ such that $H'$ is a $(2\del, d/2,(1-\del)m)$-super-regular blowup of $F$. Let $U_j$ denote the vertex class of $H'$ corresponding to the vertex $j$ in $F$ for each $j\in V(F)$. Using basic properties of regular pairs, we can find an odd cycle $D=v_1\ldots v_pv_1$ in $H'$ where $v_j\in U_{x_j}$ for all $j$. 

Suppose that $\{a, b\}$ is a matching edge of $F$ so that  $(U_a, U_b)$ is $(2\del, d/2)$-super-regular. By construction, we may pick consecutive vertices $v_t, v_{t+1}$ of $D$ such that $v_t\in U_a, v_{t+1}\in U_b$. Note that $D$ visits each set $U_i$ in $H$ at most 3 times. We may therefore apply Lemma~\ref{embe} as we did in the proof of Lemma \ref{embed} to find a $v_tv_{t+1}$-path $Q$ in $H'$ such that $Q$ intersects $D$ only at its endpoints and we can choose $Q$ to have any odd length $1\leq\ell\leq 2(1-5\del)[(1-\del)m-3]+2$. Applying the same argument to each matching edge of $F$ we see that $H$ contains an odd cycle of each odd length $p\leq\ell \leq p+\frac{q}{2}\cdot2(1-6\del)m$. The result follows.

\end{proof}
\end{document}